\newcommand{\rrvert}{\vert}
\newcommand{\llvert}{\vert}
\newtheorem{theorem}{Theorem}
\newtheorem{lemma}{Lemma}
\newtheorem{proposition}{Proposition}
\newcommand{\cF}{\mathcal{F}}
\newcommand{\cO}{\mathcal{O}}
\newcommand{\bbN}{\mathbb{N}}
\newcommand{\bbR}{\mathbb{R}}
\newcommand{\N}{\mathbb{N}}
\newcommand{\Ne}{\N^{\ast}}
\newcommand{\R}{\mathbb{R}}
\newcommand{\p}{^{\prime}}
\newcommand{\eqref}[1]{(\ref{#1})}
\begin{document}
\begin{frontmatter}

\title{The obstacle problem for quasilinear stochastic PDEs: Analytical approach}
\runtitle{The Obstacle problem for quasilnear SPDEs}

\begin{aug}
\author[A]{\fnms{Laurent}~\snm{Denis}\ead[label=e1]{ldenis@univ-evry.fr}\thanksref{T1}},
\author[B]{\fnms{Anis} \snm{Matoussi}\corref{}\ead[label=e2]{anis.matoussi@univ-lemans.fr}\thanksref{T2}}
\and
\author[A]{\fnms{Jing} \snm{Zhang}\ead[label=e3]{jing.zhang.etu@gmail.com}}
\thankstext{T1}{Supported in part by the chair \textit{risque de cr\'
edit}, F\'ed\'eration Bancaire Fran\c{c}aise.}
\thankstext{T2}{Supported in part by the Chair \textit{Financial Risks}
of the \textit{Risk Foundation} sponsored
by Soci\'et\'e G\'en\'erale, the Chair \textit{Derivatives of the Future}
sponsored by the {F\'ed\'eration Bancaire Fran\c{c}aise} and the Chair
\textit{Finance and Sustainable Development}
sponsored by EDF and Calyon.}
\runauthor{L. Denis, A. Matoussi and J. Zhang}
\affiliation{Universit{\'e} d'Evry Val d'Essonne,
Universit\'{e} du Maine and CMAP,
and Universit{\'e} d'Evry Val d'Essonne}
\address[A]{L. Denis \\
J. Zhang\\
Laboratoire Analyse et Probabilit\'es\\
Universit\'{e} d'Evry-Val-d'Essonne\\
23 Boulevard de France\\
F-91037 EVRY Cedex\\
France\\
%Universit{\'e} d'Evry Val d'Essonne\\
%Rue du P\`ere Jarlan\\
%F-91025 Evry Cedex\\
%France\\
\printead{e1}\\
\phantom{E-mail:\ }\printead*{e3}}
\address[B]{A. Matoussi\\
Laboratoire Manceau de Math\'{e}matiques\\
Universit\'{e} du Maine\\
F\'{e}d\'{e}ration de Recherche 2962 du CNRS\\
Math\'{e}matiques des Pays de Loire\\
Avenue Olivier Messiaen\\
F-72085 Le Mans Cedex 9\\
France\\
and\\
CMAP\\
Ecole Polytechnique, Palaiseau\\
France\\
\printead{e2}} %adresu isvedimo komanda gale!
\end{aug}

% HISTORY:
\received{\smonth{2} \syear{2012}}
\revised{\smonth{9} \syear{2012}}

% ABSTRACT
%
\begin{abstract}
We prove an existence and uniqueness result
for quasilinear Stochastic PDEs with obstacle (OSPDE in short).
Our method is based on analytical technics coming from the parabolic
potential theory. The solution
is expressed as a pair $(u,\nu)$ where $u$ is a predictable
continuous process which takes values in a proper Sobolev space and
$\nu$ is a random regular measure satisfying the minimal Skohorod
condition.
\end{abstract}

% KEYWORDS
% Pirmas kwd is didziosios raides
%
\begin{keyword}[class=AMS]
\kwd{60H15}
\kwd{35R60}
\kwd{31B150}
\end{keyword}
\begin{keyword}
\kwd{Parabolic potential}
\kwd{regular measure}
\kwd{stochastic partial differential equations}
\kwd{obstacle problem}
\kwd{penalization method}
\kwd{It\^o's formula}
\kwd{comparison theorem}
\kwd{space--time white noise}
\end{keyword}
\pdfkeywords{60H15, 35R60, 31B150, Parabolic potential, regular measure, stochastic partial
differential equations, obstacle problem, penalization method, Ito's formula, comparison theorem,
space-time white noise}
\end{frontmatter}

%s1 #&#
\section{Introduction}
The starting point of this work is the following parabolic stochastic
partial differential equation (in short, SPDE):
%
%e1 #&#
\begin{eqnarray}
\label{SPDE}du_t(x)&=&\partial_i \bigl(a_{i,j}(x)
\partial_ju_t(x)+g_i\bigl(t,x,u_t(x),
\nabla u_t(x)\bigr) \bigr)\,dt\nonumber\\
&&{}+f\bigl(t,x,u_t(x),\nabla
u_t(x)\bigr)\,dt
\\
&&{}+\sum_{j=1}^{+\infty}h_j
\bigl(t,x,u_t(x),\nabla u_t(x)\bigr)\,dB^j_t,\nonumber
\end{eqnarray}
where $a$ is a symmetric bounded measurable matrix which defines a
second order
operator on an open domain
$\mathcal{O}\subset\bbR^d$, with Dirichlet boundary condition. The initial
condition is given as $u_0=\xi$, a $L^2(\mathcal{O})$-valued random
variable, and $f$, $g=(g_1,\ldots,g_d)$ and $h=(h_1,\ldots, h_i,\ldots )$ are
nonlinear random functions. Given an obstacle $S\dvtx \Omega\times
[0,T]\times\cO\rightarrow\R$, we study the obstacle problem for SPDE~(\ref{SPDE}),
that is, we want to find a solution of (\ref{SPDE}) which
satisfies ``$u\geq S$'' where the obstacle~$S$ is regular in some sense and
controlled by the solution of a SPDE.

Nualart and Pardoux \cite{NualartPardoux} have studied the obstacle
problem for a nonlinear heat equation on the spatial interval
$[0,1]$ with Dirichlet boundary conditions, driven by an additive
space--time white noise. They proved the existence and uniqueness of
the solution and their method relied heavily on the results for a
deterministic variational inequality. Donati-Martin and Pardoux
\cite{DonatiPardoux} generalized the model of Nualart and Pardoux.
The nonlinearity appears both in the drift and in the diffusion coefficients.
They proved the existence of the solution by penalization method but
they did not obtain the uniqueness result. And then in 2009, Xu and
Zhang solved the problem of the uniqueness; see \cite{XuZhang}.
However, in all their models,
there is not the term of divergence and they do not consider the case
where the coefficients depend on $\nabla u$.

The work of El Karoui et al. \cite{EKPPQ} treats the obstacle problem
for deterministic semilinear PDE's within the framework of
backward stochastic differential equations (BSDE in short). Namely,
the equation \eqref{SPDE} is considered with $f$ depending on~$u$
and $\nabla u$, while the function $g$ is null (as well~$h$) and the
obstacle $v$ is continuous. They considered the viscosity solution
of the obstacle problem for the equation \eqref{SPDE}, they
represented this solution stochastically as a process and the main
new object of this BSDE framework is a continuous increasing
process that controls the set $\{u=v\}$. Bally et al. \cite{BCEF}
(see also \cite{MX08}) point out that the continuity of this
process allows one to extend the classical notion of a strong
variational solution (see Theorem 2.2 of \cite{BensoussanLions78},
page 238)
and express the solution to the obstacle as a pair $(u, \nu)$ where $
\nu$ is supported by the set $\{u=v\}$.

Matoussi and Stoica \cite{MatoussiStoica} have proved an existence and
uniqueness result for the obstacle problem of backward quasilinear
stochastic PDE on the whole space $\bbR^d$ and driven by a finite
dimensional Brownian motion. The method is based on the probabilistic
interpretation of the solution by using the backward doubly stochastic
differential equation (BDSDE in short). They have also proved that
the solution is a pair $ ( u,\nu )$ where $u$ is a predictable
continuous process which takes values in a proper Sobolev space and
$\nu$ is a random regular measure satisfying the minimal Skohorod
condition. In particular, they gave for the regular measure $\nu$ a
probabilistic interpretation in terms of the continuous increasing
process $K$ where $ (Y,Z,K)$ is the solution of a reflected generalized BDSDE.

Michel Pierre \cite{Pierre,PIERRER} has studied the parabolic PDE with obstacle
using the parabolic potential as a tool. He proved that the solution
uniquely exists and is quasi-continuous. With the help of Pierre's
result, under suitable assumptions on $f$, $g$ and $h$, our aim is to
prove existence and uniqueness for the following SPDE with given
obstacle $S$ that we write formally as
%
%e2 #&#
%e3 #&#
%e4 #&#
%e5 #&#
%e6 #&#
\begin{eqnarray}
\label{SPDEO}\cases{\displaystyle du_t(x)=\partial_i \bigl(a_{i,j}(x)
\partial_ju_t(x)+g_i\bigl(t,x,u_t(x),
\nabla u_t(x)\bigr) \bigr)\,dt\vspace*{2pt}\cr
\hspace*{19pt}\qquad{}\displaystyle +f\bigl(t,x,u_t(x),\nabla
u_t(x)\bigr)\,dt\vspace*{2pt}\cr
\hspace*{19pt}\qquad{}
 \displaystyle+\sum_{j=1}^{+\infty
}h_j
\bigl(t,x,u_t(x),\nabla u_t(x)\bigr)\,dB^j_t,
\vspace*{2pt}\cr
\displaystyle u_t(x)\geq S_t(x),\qquad \forall(t,x)\in\bbR^+\times\cO,
\vspace*{2pt}\cr
 \displaystyle u_0(x)=\xi(x),\hspace*{3pt}\qquad\forall x\in\cO,
\vspace*{2pt}\cr
 \displaystyle u_t(x)=0,\hspace*{19pt}\qquad\forall(t,x)\in \bbR^+\times\partial\cO.}
\end{eqnarray}

To give a rigorous definition to the notion of a solution to this
equation, we will use the technics of parabolic potential theory
developed by M. Pierre
in the stochastic framework. We first prove a quasi-continuity result
for the solution of the SPDE \eqref{SPDE} with null Dirichlet condition
on given domain $\mathcal O$ and driven by an infinite dimensional
Brownian motion. This result is not obvious and is based
on a mixing pathwise argument and Mignot and Puel \cite{MignotPuel}
existence result
of the obstacle problem for some deterministic PDEs. Moreover, we prove
in our context that the reflected measure $\nu$ is a regular random
measure and we give
the analytical representation of such a measure in terms of parabolic
potential in the sense given by M. Pierre in \cite{Pierre}. The main
theorem we obtain is the following:

%th1 #&#
\begin{theorem}
Assume that $f$,
$g$ and $h$ satisfy some Lipschitz continuity and integrability
hypotheses, $\xi\in L^2 (\Omega\times\cO)$, $S$ is quasi-continuous and
$S_t\leq S'_t $, where~$S'$ is the solution of the linear SPDE with null boundary condition
\[
\cases{ %
\displaystyle dS'_t=LS'_t\,dt+f'_t\,dt+
\sum_{i=1}^d \partial_i
g'_{i,t}\,dt+\sum_{j=1}^{+\infty}h'_{j,t}\,dB^j_t,
\vspace*{2pt}\cr
S'(0)=S'_0,}
\]
where $S'_0\in L^2 (\Omega\times\cO)$, and $f'$, $g'$ and $h'$ are
square integrable adapted
processes.

Then there exists a unique solution $ ( u,\nu )$ of the
obstacle problem for the SPDE~(\ref{SPDEO}) associated to $(\xi,f,g,h,S)$, that is, $u$ is a predictable continuous process which
takes values in a proper Sobolev space, $u\geq S$ and
$\nu$ is a random regular measure such that:
\begin{longlist}[(1)]
\item[(1)] The following relation holds almost surely, for all
$t\in[0,T]$ and $\forall\varphi\in\mathcal{C}_{c}^{\infty} (\R^+ )\otimes\mathcal{C}_c^2 (\cO)$,
\begin{eqnarray*}
&&(u_t,\varphi_t)-(\xi,\varphi_0)-\int
_0^t(u_s,\partial_s
\varphi_s)\,ds+\int_0^t
\mathcal{E}(u_s,\varphi_s)\,ds\\
&&\quad{}+\sum
_{i=1}^d\int_0^t
\bigl(g^i_s(u_s,\nabla u_s),
\partial_i\varphi_s\bigr)\,ds
\\
&&\qquad=\int_0^t\bigl(f_s(u_s,
\nabla u_s),\varphi_s\bigr)\,ds+\sum
_{j=1}^{+\infty
}\int_0^t
\bigl(h^j_s(u_s,\nabla u_s),
\varphi_s\bigr)\,dB^j_s\\
&&\qquad\quad{}+\int
_0^t\int_{\mathcal
{O}}
\varphi_s(x)\nu(dx,ds).
\end{eqnarray*}
\item[(2)]$u$ admits a quasi-continuous version, $\tilde{u}$, and we have
the mininal Skohorod condition
\[
\int_0^T\int_{\mathcal{O}}\bigl(
\tilde{u}(s,x)-{S}(s,x)\bigr)\nu(dx,ds)=0\qquad \mbox{a.s.}
\]
\end{longlist}
\end{theorem}

This paper is divided as follows: in the second section we set the
assumptions, then we introduce in the third section the notion of
a regular measure associated to parabolic potentials. The fourth
section is devoted to prove the
quasi-continuity of the solution of SPDE without obstacle. The fifth
section is the main part of the paper in which we prove existence
and uniqueness of the solution. To do that, we begin with the linear
case, and
then by Picard iteration we get the result in the
nonlinear case; we also establish an It\^o formula. Finally, in the
sixth section we prove
a comparison theorem for the solution of SPDE with obstacle.

%s2 #&#
\section{Preliminaries}
We consider a sequence $((B^i(t))_{t\geq0})_{i\in\mathbb{N}^*}$ of
independent Brownian motions defined on a standard filtered
probability space $(\Omega,\mathcal{F},(\mathcal{F}_t)_{t\geq0}, P)$
satisfying the usual conditions.

Let $\mathcal{O}\subset\bbR^d$ be an open domain and
$L^2(\mathcal{O})$ the set of square integrable functions with
respect to the Lebesgue measure on $\mathcal{O}$. It is a Hilbert
space equipped with the usual scalar product and norm as follows:
\[
(u,v)=\int_\mathcal{O}u(x)v(x)\,dx, \qquad\Vert u\Vert =\biggl(\int
_\mathcal{O}u^2(x)\,dx\biggr)^{1/2}.
\]
Let $A$ be a symmetric second order differential operator, with domain
$\mathcal{D}(A)$, given by
\[
A:=-L=-\sum_{i,j=1}^d\partial_i
\bigl(a^{i,j}(x)\partial_j\bigr).
\]
We assume that
$a(x)=(a^{i,j}(x))_{i,j}$ is a measurable symmetric matrix defined on~$\mathcal{O}$ which satisfies the uniform ellipticity
condition
\[
\lambda|\xi|^2\leq\sum_{i,j=1}^d
a^{i,j}(x)\xi^i\xi^j\leq\Lambda|
\xi|^2\qquad \forall x\in\mathcal {O}, \xi\in\bbR^d,
\]
where $\lambda$ and $\Lambda$ are positive constants.

Let $(F,\mathcal{E})$ be the associated Dirichlet form given by
$F:=\mathcal{D}(A^{1/2})=H_0^1(\mathcal{O})$ and
\[
\mathcal{E}(u,v):=\bigl(A^{1/2}u,A^{1/2}v\bigr)\quad \mbox{and}\quad
\mathcal{E}(u)=\bigl\Vert A^{1/2}u\bigr\Vert^2\qquad \forall u,v\in F,
\]
where $H_0^1(\mathcal{O})$ is the first order
Sobolev space of functions vanishing at the boundary. As usual, we
shall denote $H^{-1}(\mathcal{O})$ its dual space.

We consider the quasilinear stochastic partial differential equation
(\ref{SPDE}) with initial condition $u(0,\cdot)=\xi(\cdot)$ and Dirichlet
boundary condition $u(t,x)=0, \forall (t,x)\in
\bbR^+\times\partial\mathcal{O}$.

We assume that we have predictable random
functions
\begin{eqnarray*}
 f\dvtx \bbR^+\times\Omega\times\mathcal{O}\times \bbR\times\bbR^d
&\rightarrow& \bbR,
\\
g=(g_1,\ldots,g_d)\dvtx \bbR^+\times\Omega\times
\mathcal {O}\times\bbR \times \bbR^d&\rightarrow &\bbR^d,
\\
h=(h_1,\ldots,h_i,\ldots )\dvtx \bbR^+\times\Omega
\times\mathcal {O}\times \bbR\times\bbR^d&\rightarrow&
\bbR^{\mathbb{N}^*}.
\end{eqnarray*}
In the sequel,
$|\cdot|$ will always denote the underlying Euclidean or $l^2$-norm.
For
example,
\[
\bigl|h(t,\omega,x,y,z)\bigr|^2=\sum_{i=1}^{+\infty}\bigl|h_i(t,
\omega,x,y,z)\bigr|^2.
\]

\renewcommand{\theassumption}{(H)}
\begin{assumption}\label{assH}
There exist nonnegative constants $C, \alpha, \beta$ such that for almost all $\omega$, the following
inequalities hold for all
$(t,x,y,z)\in\bbR^+\times\mathcal{O}\times\mathbb{R}\times
\mathbb{R}^d$:
\begin{longlist}[(1)]
\item[(1)]$|f(t,\omega,x,y,z)-f(t,\omega,x,y',z')|\leq C(|y-y'|+|z-z'|),$\vspace*{1pt}
\item[(2)]$(\sum_{i=1}^d|g_i(t,\omega,x,y,z)-g_i(t,\omega,x,y',z')|^2)^{{1}/{2}}\leq
C|y-y'|+\alpha|z-z'|,$\vspace*{1pt}
\item[(3)]$(|h(t,\omega,x,y,z)-h(t,\omega,x,y',z')|^2)^{{1}/{2}}\leq
C|y-y'|+\beta|z-z'|,$\vspace*{1pt}
\item[(4)] the contraction property: $2\alpha+\beta^2<2\lambda.$
\end{longlist}
\end{assumption}
%
%re1 #&#
\begin{remark} This last contraction property ensures existence and
uniqueness for the solution of the SPDE without obstacle (see \cite
{DenisStoica}).
\end{remark}

With the uniform ellipticity condition we have the following
equivalent conditions:
\begin{eqnarray*}
\bigl\Vert f(u,\nabla u)-f(v,\nabla v)\bigr\Vert &\leq& C\Vert u-v\Vert +C
\lambda^{-1/2}\mathcal{E}^{1/2}(u-v),
\\
\bigl\Vert g(u,\nabla u)-g(v,\nabla v)\bigr\Vert_{L^2(\mathcal{O};\bbR^d)}&\leq& C\Vert u-v\Vert +
\alpha\lambda^{-1/2}\mathcal{E}^{1/2}(u-v),
\\
\bigl\Vert h(u,\nabla u)-h(v,\nabla v)\bigr\Vert_{L^2(\mathcal{O};\bbR^{\mathbb{N}^*})}&\leq& C\Vert u-v\Vert +\beta
\lambda^{-1/2}\mathcal{E}^{1/2}(u-v).
\end{eqnarray*}

Moreover, for simplicity, we fix a terminal time $T >0$, and we assume
the following:

\renewcommand{\theassumption}{(I)}
\begin{assumption}\label{assI}
\begin{eqnarray*}
\xi&\in& L^2(\Omega\times\mathcal{O})\mbox{ is an }\mathcal{F}_0\mbox{-measurable
random variable},
\\
f(\cdot,\cdot,\cdot,0,0)&:=&f^0\in L^2\bigl([0,T]\times
\Omega\times\mathcal{O};\bbR\bigr),
\\
g(\cdot,\cdot,\cdot,0,0)&:=&g^0=\bigl(g_1^0,
\ldots,g_d^0\bigr)\in L^2\bigl([0,T]\times
\Omega\times\mathcal{O};\bbR^d\bigr),
\\
h(\cdot,\cdot,\cdot,0,0)&:=&h^0=\bigl(h_1^0,
\ldots,h_i^0,\ldots \bigr)\in L^2\bigl([0,T]
\times\Omega\times\mathcal{O};\bbR^{\mathbb{N}^*}\bigr).
\end{eqnarray*}
\end{assumption}

Now we introduce the notion of a weak solution.

We denote by
$\mathcal
{H}_T$ the space
of $H_0^1(\mathcal{O})$-valued predictable $L^2 (\cO)$-continuous processes
$(u_t)_{t\geq0}$ which satisfy
\[
E\sup_{t\in[0,T]}\Vert u_t\Vert^2+E\int
_0^T\mathcal {E}(u_t)\,dt<+\infty.
\]
It is the natural space for solutions.

The space of test functions is denoted by $\mathcal{D}=\mathcal{C}%
_{c}^{\infty} (\R^+ )\otimes\mathcal{C}_c^2 (\cO)$, where
$\mathcal
{C}%
_{c}^{\infty} (\R^+ )$ is the space of all real-valued infinitely
differentiable functions with compact support in $\mathbb{R}^+$ and
$\mathcal{C}_c^2 (\cO)$ is the set
of $C^2$-functions with compact support in~$\cO$.

%the set of test functions, which is the tensor product of the two
%Dirichlet structures $H^1([0,T])$ which is the standard Sobolev
%space on $[0,T]$ and $H_0^1(\mathcal{O})$, i.e.
%$\varphi\in\mathcal{D}$ if and only if\begin{enumerate}
% \item$\varphi\in
% L^2([0,T]\times\mathcal{O})$;
% \item for almost all
% $x\in\mathcal{O}$, $\varphi(\cdot,x)\in
% H^1([0,T])$;
% \item for almost all
% $t\in[0,T]$, $\varphi(t,\cdot)\in
% H_0^1(\mathcal{O})$;
% \item
% $\int_0^T\int_{\mathcal{O}}|\varphi_t|^2+|\partial_t\varphi_t|^2+|
% \end{enumerate}
Heuristically, a pair $(u,\nu)$ is a solution of the obstacle
problem for (\ref{SPDE}) with Dirichlet boundary condition if we have
the following:
\begin{longlist}[(1)]
\item[(1)]$u\in\mathcal{H}_T$ and $u(t,x)\geq S(t,x), dP\otimes
dt\otimes
dx$-a.e. and $u_0(x)=\xi, dP\otimes dx$-a.e.;
\item[(2)]$\nu$ is a random measure defined on
$[0,T)\times\mathcal{O}$;
\item[(3)] the following relation holds almost surely, for all
$t\in[0,T]$ and $\forall\varphi\in\mathcal{D}$,
\begin{eqnarray*}
&&(u_t,\varphi_t)-(\xi,\varphi_0)-\int
_0^t(u_s,\partial_s
\varphi_s)\,ds+\int_0^t
\mathcal{E}(u_s,\varphi_s)\,ds\\
&&\quad{}+\sum
_{i=1}^d\int_0^t
\bigl(g^i_s(u_s,\nabla u_s),
\partial_i\varphi_s\bigr)\,ds
\\
&&\qquad=\int_0^t\bigl(f_s(u_s,
\nabla u_s),\varphi_s\bigr)\,ds+\sum
_{j=1}^{+\infty
}\int_0^t
\bigl(h^j_s(u_s,\nabla u_s),
\varphi_s\bigr)\,dB^j_s\\
&&\qquad\quad{}+\int
_0^t\int_{\mathcal
{O}}
\varphi_s(x)\nu(dx,ds);
\end{eqnarray*}
\item[(4)]
\[
\int_0^T\int_{\mathcal{O}}
\bigl(u(s,x)-S(s,x)\bigr)\nu(dx,ds)=0\qquad \mbox{a.s.}
\]
\end{longlist}

But, the random measure, which in some sense obliges the solution to
stay above the barrier, is a local time so, in general, it is not
absolutely continuous w.r.t. the Lebesgue measure. As a consequence, for
example, the condition\looseness=-1
\[
\int_0^T\int_\mathcal{O}
\bigl(u(s,x)-S(s,x)\bigr)\nu(dx\,ds)=0
\]\looseness=0
makes no
sense. Hence, we need to consider a precise version of $u$ and $S$
defined $\nu$-almost surely.

In order to tackle this difficulty, we introduce in the next section
the notions of parabolic capacity on
$[0,T]\times\mathcal{O}$ and a quasi-continuous version of functions
introduced by Michel Pierre in several works (see, e.g., \cite
{Pierre,PIERRER}). Let us remark that these tools were also
used by Klimsiak \cite{Klimsiak} to get a probabilistic
interpretation to semilinear PDEs with obstacle.

Finally and to end this section, we give an important example of
stochastic noise which is covered by our framework:
%
%ex1 #&#
\begin{example}
\label{mainex} Let $W$ be a
noise white in time and colored in space, defined on a standard
filtered probability
space $  ( \Omega,\cF, (\cF_t)_{t \geq0}, P  )$ whose
covariance function is given by
\[
\forall s,t\in\R^+, \forall x,y\in\cO\qquad E\bigl[\dot{W} (x,s)\dot {W}(y,t)\bigr]=
\delta(t-s)k(x,y),
\]
where $k\dvtx \cO\times\cO\mapsto\R^+$ is a symmetric and measurable
function.

Consider the following SPDE driven by $W$:
%
%e7 #&#
%e8 #&#
\begin{eqnarray}
\label{eW} du_t (x)& =& \Biggl( \sum
_{i,j=1}^d \partial_i a_{i,j}(x)
\partial_j u_t (x) +f\bigl(t,x,u_t (x),\nabla
u_t (x)\bigr)\nonumber\\
&&\hspace*{77pt}{} + \sum_{i=1}^d
\partial_i g_i\bigl(t,x,u_t(x),\nabla
u_t (x)\bigr) \Biggr) \,dt
\\
&&{} + \tilde{h} \bigl(t,x,u_t(x),\nabla u_t(x)\bigr)
W(dt,x), \nonumber%
\end{eqnarray}
where $f$ and $g$ are as above and $\tilde{h}$ is a random real-valued
function.

We assume that the covariance function $k$ defines a trace class operator
denoted by $K$ in $L^2 (\mathcal{O})$. It is well known (see \cite
{RiezNagy}) that there exists an orthogonal
basis $(e_i )_{i\in\Ne}$ of $L^2 (\cO)$ consisting of eigenfunctions of
$K$ with corresponding eigenvalues $(\lambda_i )_{i\in\Ne}$ such that
\[
\sum_{i=1}^{+\infty} \lambda_i <+
\infty
\]
and
\[
k(x,y)=\sum_{i=1}^{+\infty}
\lambda_i e_i (x)e_i (y).
\]
It is also well known that there exists a sequence $((B^i
(t))_{t\geq0})_{i\in\Ne}$ of
independent standard Brownian motions such that
\[
W(dt, \cdot)=\sum_{i=1}^{+\infty}
\lambda_i^{1/2} e_i B^i (dt),
\]
so that equation \eqref{eW} is equivalent to \eqref{SPDE} with
$h=(h_i)_{i\in\Ne}$ where
\[
\forall i\in\Ne\qquad h_i (s,x, y,z)=\sqrt{\lambda_i}\tilde
{h}(s,x,y,z)e_i (x).
\]
%
%If $v\in L^2 (\cO)$, we denote
%dxdy.\]
Assume as in \cite{SW} that for all $i\in\Ne$, $\| e_i \|_{\infty}
<+\infty$ and
\[
\sum_{i=1}^{+\infty} \lambda_i \|
e_i \|_{\infty}^2 <+\infty.
\]
Since
\begin{eqnarray*}
&&\bigl(\bigl| h( t,\omega,x,y,z) -h( t,\omega,x,y{\p},z{\p}) \bigr|^2
\bigr)^{{1}/{2}}\\
&&\qquad\leq \Biggl( \sum_{i=1}^{+\infty}
\lambda_i \| e_i \|_{\infty}^2 \Biggr)
\bigl\llvert \tilde{h} (t,x,y,z)-\tilde{h}(t,x,y{\p },z{\p })\bigr
\rrvert^2,
\end{eqnarray*}
$h$ satisfies the Lipschitz hypothesis \textup{(H)-(3)} if $\tilde{h}$
satisfies a similar Lipschitz hypothesis.
\end{example}

%s3 #&#
\section{Parabolic potential analysis}{\label{capacity}}
%s3.1 #&#
\subsection{Parabolic capacity and potentials}
In this section we will recall some important definitions and
results concerning the obstacle problem for parabolic PDE in
\cite{Pierre} and \cite{PIERRER}.

$\mathcal{K}$ denotes $L^\infty([0,T];L^2(\mathcal{O}))\cap
L^2([0,T];H_0^1(\mathcal{O}))$ equipped with the norm
\begin{eqnarray*}
\Vert v\Vert^2_\mathcal{K}&=&\Vert v\Vert^2_{L^\infty([0,T];L^2(\mathcal{O}))}+
\Vert v\Vert^2_{L^2([0,T];H_0^1(\mathcal{O}))}
\\
&=&\sup_{t\in[0,T[}\Vert v_t\Vert^2 +\int
_0^T \bigl( \Vert v_t
\Vert^2 +\mathcal{E}(v_t) \bigr) \,dt.
\end{eqnarray*}
$\mathcal{C}$ denotes the space of continuous
functions on compact support in $[0,T[\times\mathcal{O}$ and,
finally,
\[
\mathcal{W}=\biggl\{\varphi\in L^2\bigl([0,T];H_0^1(
\mathcal{O})\bigr); \frac
{\partial
\varphi}{\partial t}\in L^2\bigl([0,T];H^{-1}(
\mathcal{O})\bigr)\biggr\},
\]
endowed with the
norm $\Vert \varphi\Vert^2_{\mathcal{W}}=\Vert
\varphi\Vert^2_{L^2([0,T];H_0^1(\mathcal{O}))}+\Vert
\frac{\partial
\varphi}{\partial t}\Vert^2_{L^2([0,T];H^{-1}(\mathcal{O}))}$.

It is known (see \cite{LionsMagenes}) that $\mathcal{W}$ is
continuously embedded in $C([0,T]; L^2 (\cO))$, the set of $L^2 (\cO
)$-valued continuous functions on $[0,T]$. So without ambiguity, we
will also consider
$\mathcal{W}_T=\{\varphi\in\mathcal{W};\varphi(T)=0\}$,
$\mathcal{W}^+=\{\varphi\in\mathcal{W};\varphi\geq0\}$,
$\mathcal{W}_T^+=\mathcal{W}_T\cap\mathcal{W}^+$.

We now introduce the notion of parabolic potentials and regular
measures which permit to define the parabolic capacity.
%
%de1 #&#
\begin{definition}
An element $v\in\mathcal{K}$ is said to be a \textit{parabolic potential}
if it satisfies
\[
\forall\varphi\in\mathcal{W}_T^+\qquad \int_0^T-
\biggl(\frac{\partial\varphi_t}{\partial
t},v_t\biggr)\,dt+\int_0^T
\mathcal{E}(\varphi_t,v_t)\,dt\geq0.
\]
We denote by $\mathcal{P}$ the set of all parabolic potentials.
\end{definition}
The next representation property is crucial:
%
%pr1 #&#
\begin{proposition}[(Proposition 1.1 in \cite{PIERRER})]\label{presentation}
Let $v\in\mathcal{P}$, then there exists a unique positive Radon
measure on $[0,T[\times\mathcal{O}$, denoted by $\nu^v$, such that
\[
\forall\varphi\in\mathcal{W}_T\cap\mathcal{C}\qquad \int
_0^T\biggl(-\frac
{\partial
\varphi_t}{\partial t}, v_t
\biggr)\,dt+\int_0^T\mathcal{E}(
\varphi_t,v_t)\,dt=\int_0^T
\int_\mathcal{O}\varphi(t,x)\,d\nu^v.
\]
Moreover, $v$ admits a right-continuous (resp., left-continuous)
version $\hat{v}$  (resp., $\bar{v})\dvtx [0,T]\mapsto L^2
(\cO)$.

Such a Radon measure $\nu^v$ is called \emph{a regular measure} and
we write
\[
\nu^v =\frac{\partial v}{\partial t}+Av.
\]
\end{proposition}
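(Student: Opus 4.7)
The plan is to obtain $\nu^v$ as the Riesz--Markov representation of a positive linear functional on compactly supported continuous functions, and then to extract right- and left-continuous versions of $v$ by time regularization of the resulting distributional PDE.

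First, for $\varphi\in\mathcal{W}_T$ I would define
$$L(\varphi) := \int_0^T \langle -\partial_t \varphi_t,\, v_t\rangle\, dt + \int_0^T \mathcal{E}(\varphi_t, v_t)\, dt,$$
which is well defined thanks to the $H^{-1}(\mathcal{O})$--$H_0^1(\mathcal{O})$ duality (since $\partial_t\varphi\in L^2(0,T;H^{-1})$ and $v\in L^2(0,T;H_0^1)$) and, by the definition of $\mathcal{P}$, satisfies $L\geq 0$ on $\mathcal{W}_T^+$. The crux is upgrading $L$ to a positive linear functional on $C_c([0,T[\times\mathcal{O})$ that is locally bounded by the sup-norm. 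For any compact $K\subset[0,T[\times\mathcal{O}$ I would build a dominating element $\chi_K\in\mathcal{W}_T^+$ with $\chi_K\geq 1$ on a neighborhood of $K$, for example as the product of a smooth temporal cut-off vanishing on $[T-\delta,T]$ and a spatial $H_0^1(\mathcal{O})$-function equal to $1$ on the spatial projection of $K$. Positivity applied to $\|\varphi\|_\infty\chi_K\pm\varphi\in\mathcal{W}_T^+$ then yields
$$|L(\varphi)|\leq \|\varphi\|_\infty L(\chi_K) =: C_K\|\varphi\|_\infty$$
for all $\varphi\in\mathcal{W}_T$ supported in $K$. A standard mollification argument shows $\mathcal{W}_T\cap\mathcal{C}$ is sup-norm dense in $C_c([0,T[\times\mathcal{O})$ with support control, so $L$ extends by continuity, and Riesz--Markov yields the unique positive Radon measure $\nu^v$ with the required identity; uniqueness is then immediate from this density.

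For the continuous versions, the identity rewrites as the distributional PDE $\partial_t v + Av = -\nu^v$ on $(0,T)\times\mathcal{O}$. I would mollify in time by convolving with nonnegative kernels $\rho_\varepsilon$ supported in $[0,\varepsilon]$ and in $[-\varepsilon,0]$ to obtain $v^+_\varepsilon$ and $v^-_\varepsilon$, each smooth in $t$ and satisfying a regularized version of the PDE. Testing against a fixed $\phi\in H_0^1(\mathcal{O})\cap C_c(\mathcal{O})$ shows that $t\mapsto(v_t,\phi)$ has distributional derivative equal to the signed measure $\langle\nu^v,\phi\rangle - \mathcal{E}(v_t,\phi)\,dt$, hence is of bounded variation on every compact subinterval of $[0,T[$. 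Consequently its one-sided limits exist for every $t$, and combined with the uniform bound $\|v_t\|\leq\|v\|_{\mathcal{K}}$ (and density of admissible $\phi$ in $L^2(\mathcal{O})$) one upgrades these to $L^2(\mathcal{O})$-valued right- and left-continuous representatives $\hat v$ and $\bar v$ that agree with $v$ a.e.

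The main obstacle will be the sup-norm estimate of Step 2, where one has to simultaneously construct the dominating function $\chi_K\in\mathcal{W}_T^+$ and the support-preserving mollifier that keeps approximations inside $\mathcal{W}_T$ (in particular vanishing at $t=T$); and, in the last step, the extraction of pointwise-in-$t$ one-sided $L^2$-limits, for which the BV structure of $(v_t,\phi)$ driven by the positive measure $\nu^v$ must be exploited carefully.
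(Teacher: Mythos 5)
The paper offers no proof of this statement: it is imported verbatim from Pierre (\cite{PIERRE}, Proposition 1.1), so there is no internal argument to compare against. Your reconstruction follows what is essentially the classical route: positivity of $L$ on $\mathcal{W}_T^+$, a local sup-norm bound obtained by dominating with a cut-off $\chi_K\in\mathcal{W}_T^+$ via $L(\|\varphi\|_\infty\chi_K\pm\varphi)\geq 0$, density of smooth compactly supported functions, and Riesz--Markov. That part is sound (note only that the bound $|L(\varphi)|\leq C_K\|\varphi\|_\infty$ is vacuous for unbounded elements of $\mathcal{W}_T$, which is harmless since you only need it on $\mathcal{W}_T\cap\mathcal{C}$, and that positivity of the extended functional on nonnegative continuous $\varphi$ requires the small additional step of approximating by $\varphi_n+\|\varphi_n-\varphi\|_\infty\chi_K\geq 0$).

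Two points need attention. First, a sign: integrating by parts in the defining identity gives $\nu^v=\partial_t v+Av$ on $(0,T)\times\mathcal{O}$, as stated in the proposition, not $\partial_t v+Av=-\nu^v$; with your sign the measure would be $-(\partial_t v+Av)$ and would be negative, contradicting the positivity you have just established. This does not damage your BV argument, which only needs the derivative of $t\mapsto(v_t,\phi)$ to be a locally finite signed measure plus an $L^1_{loc}$ density, but it should be corrected. Second, and more substantively, the BV argument yields one-sided limits of the scalar maps $t\mapsto(v_t,\phi)$ for $\phi$ in a countable dense family, and hence, using the uniform bound $\|v_t\|\leq\|v\|_{\mathcal{K}}$, only \emph{weak} one-sided limits in $L^2(\mathcal{O})$. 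The proposition asserts right- and left-continuous versions as $L^2(\mathcal{O})$-valued maps in the strong sense (this strong one-sided continuity is what the present paper actually uses, e.g.\ to define $v_T=\bar{v}_{T^-}$ and in the convergence arguments of Section 5). Upgrading weak to strong one-sided limits requires in addition convergence of the norms $\|v_{t_n}\|\to\|\hat{v}_t\|$, which comes from an energy identity for parabolic potentials relating $\|v_t\|^2-\|v_s\|^2$, $\int_s^t\mathcal{E}(v_r)\,dr$ and the mass of $\nu^v$ on $[s,t)\times\mathcal{O}$ (the identity quoted in the paper as Lemma II.6 of \cite{Pierre}). You correctly flag this last step as the delicate one, but the missing ingredient is precisely this energy argument, not merely a more careful exploitation of the BV structure.
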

%
%re2 #&#
\begin{remark} As a consequence, we can also define for all $v\in
\mathcal{P}$,
\[
v_T =\lim_{t\uparrow T}\bar{v}_t \in L^2 (
\cO).
\]
\end{remark}
%
%$\nu$ on $[0,T]\times\mathcal{O}$ is called \textbf{a regular
%measure}, if there exists $v\in\mathcal{P}$ such that
% $\nu=\frac{\partial v}{\partial t}+Av$ in the sense that
% $\forall\varphi\in\mathcal{W}_T\cap\mathcal{C}$, $$\int_0^T(-\frac{

%de2 #&#
\begin{definition}
Let $K\subset[0,T[\times\mathcal{O}$ be compact; $v\in\mathcal{P}$
is said to be $\nu$-superior than 1 on $K$, if there
exists a sequence $v_n\in\mathcal{P}$ with $v_n\geq1$ a.e. on a
neighborhood of $K$ converging to $v$ in
$L^2([0,T];H_0^1(\mathcal{O}))$.
\end{definition}
We denote
\[
\mathscr{S}_K=\{v\in\mathcal{P}; v\mbox{ is }\nu\mbox{-superior to 1 on }K\}.
\]

%pr2 #&#
\begin{proposition}[(Proposition 2.1 in \cite{PIERRER})]
Let $K\subset[0,T[\times\mathcal{O}$ be compact, then $\mathscr{S}_K$
admits a smallest $v_K\in\mathcal{P}$ and the measure $\nu^v_K$
whose support is in $K$ satisfies
\[
\int_0^T\int_\mathcal{O}\,d
\nu^v_K=\inf_{v\in\mathcal{P}}\biggl\{\int
_0^T\int_\mathcal{O}\,d
\nu^v; v\in\mathscr{S}_K\biggr\}.
\]
\end{proposition}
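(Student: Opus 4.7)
The plan is to construct $v_K$ by a lattice-plus-weak-compactness argument inside $\mathcal{K}$, and then to read off the support of $\nu^{v_K}$ and the mass minimality via a perturbation argument around $v_K$.

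First I would establish two structural facts: $\mathscr{S}_K$ is nonempty, and $\mathcal{P}$ is stable under the pointwise minimum $\wedge$ (so the same holds for $\mathscr{S}_K$, since the defining bound is preserved on the intersection of two neighborhoods). Nonemptiness follows by exhibiting an explicit element, e.g.\ by solving a Cauchy problem for $\partial_t v + Av = \mu$ with smooth initial datum and a nonnegative source $\mu$ large enough to force the solution above $1$ on a neighborhood of $K$. Stability under $\wedge$ is the parabolic analogue of the classical elliptic fact: for $v_1,v_2\in\mathcal{P}$ one tests the defining inequality with $\varphi\in\mathcal{W}_T^+$ by splitting the integrals over $\{v_1\le v_2\}$ and its complement, using $v_1\wedge v_2\in L^2([0,T];H_0^1(\mathcal{O}))$ to recombine.

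Next I would extract $v_K$. Pick a minimizing sequence $(v_n)\subset\mathscr{S}_K$ for $\|\cdot\|_{\mathcal{K}}$ and replace it by $w_n=v_1\wedge\cdots\wedge v_n$, which by the lattice stability remains in $\mathscr{S}_K$ with nonincreasing norm. The pointwise monotone limit $v_K$ then exists a.e.\ and stays bounded in $\mathcal{K}$; weak compactness in $L^2([0,T];H_0^1(\mathcal{O}))$ together with weak-$\ast$ compactness in $L^\infty([0,T];L^2(\mathcal{O}))$ identifies $v_K\in\mathcal{K}$. Passing to the limit in the defining inequality of $\mathcal{P}$ (stable under weak limits since it is linear in $v$) yields $v_K\in\mathcal{P}$, and the a.e.\ bound $w_n\ge 1$ on a fixed neighborhood of $K$ survives the monotone limit, so $v_K\in\mathscr{S}_K$. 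Minimality of $v_K$ in the pointwise sense is then automatic: for any $v\in\mathscr{S}_K$, the element $v\wedge v_K$ lies in $\mathscr{S}_K$ with $\mathcal{K}$-norm no larger than $\|v_K\|_{\mathcal{K}}$, which forces $v\wedge v_K=v_K$ a.e., hence $v_K\le v$ a.e.

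Finally I would deduce the support and mass claims by perturbation around $v_K$. Assume $\mathrm{supp}(\nu^{v_K})\not\subset K$; by the regularity of the Radon measure $\nu^{v_K}$ (Proposition \ref{presentation}) there is an open $U$ with $\overline{U}\cap K=\emptyset$ and $\nu^{v_K}(U)>0$. Solving $\partial_t w+Aw=\mathbf{1}_U\,\nu^{v_K}$ with $w(0,\cdot)=0$ produces $w\ge 0$ in $\mathcal{K}$, and $v_K-w\in\mathcal{P}$ has associated regular measure $\nu^{v_K}-\mathbf{1}_U\nu^{v_K}\ge 0$. Exploiting forward propagation of the parabolic flow together with the separation of $U$ from $K$, one verifies that $v_K-w$ still dominates $1$ on a neighborhood of $K$, contradicting the minimality just proved. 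Once $\mathrm{supp}(\nu^{v_K})\subset K$ is known, the infimum formula follows from the same perturbation principle: any competitor $v\in\mathscr{S}_K$ with $\int d\nu^v<\int d\nu^{v_K}$ could be modified by subtracting the parabolic potential generated by its excess mass outside $K$, yielding an element of $\mathscr{S}_K$ strictly smaller than $v_K$ and again contradicting minimality. The main obstacle is precisely this support argument: the perturbation must preserve the constraint $v\ge 1$ near $K$, which is why one localizes on sets $U$ at positive distance from $K$ and exploits carefully the forward-in-time nature of the parabolic flow.
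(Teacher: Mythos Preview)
The paper does not prove this proposition; it is quoted verbatim as Proposition~2.1 from Pierre's work \cite{PIERRE} and used as a black box. So there is no ``paper's own proof'' to compare your sketch against.

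Evaluating your sketch on its own merits, there are two genuine gaps.

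First, the pointwise-minimality step does not close. You assert that for any $v\in\mathscr{S}_K$ the element $v\wedge v_K$ has $\mathcal{K}$-norm no larger than $\|v_K\|_{\mathcal{K}}$, and that this forces $v\wedge v_K=v_K$. Neither claim is justified: the inequality $\|\nabla(v\wedge v_K)\|_{L^2_t L^2_x}\le\|\nabla v_K\|_{L^2_t L^2_x}$ is false in general (truncation can raise Dirichlet energy), and even if the norm inequality held, you have not shown that the $\mathcal{K}$-norm minimizer over $\mathscr{S}_K$ is unique (the norm is not strictly convex). The same objection applies to your earlier claim that the $\mathcal{K}$-norm of $w_n=v_1\wedge\cdots\wedge v_n$ is nonincreasing. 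The route in \cite{PIERRE} is different: one minimizes the total mass $\int d\nu^v$ (not the $\mathcal{K}$-norm) and uses an energy identity for parabolic potentials linking $\|v\|_{\mathcal{K}}^2$ to $\int\tilde v\,d\nu^v$ to obtain compactness and to identify the limit.

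Second, and more seriously, the support argument fails as written. If $U$ is disjoint from $K$ but lies at \emph{earlier} times than part of $K$, the forward parabolic potential $w$ generated by $\mathbf{1}_U\,\nu^{v_K}$ is strictly positive on that part of $K$ (heat propagates forward in time), so $v_K-w$ need not remain $\ge 1$ there. Your appeal to ``forward propagation'' only handles the case where $U$ sits later in time than all of $K$, which is not the generic situation. The correct mechanism, developed in \cite{PIERRE}, is an obstacle-problem/balayage characterization: $v_K$ is the r\'eduite of the constant $1$ over $K$, and for such solutions the associated measure is concentrated on the contact set, hence inside $K$. Your perturbation shortcut does not substitute for this.
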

%
%de3 #&#
\begin{definition}[(Parabolic capacity)]
\begin{itemize}
\item Let $K\subset[0,T[\times\mathcal{O}$ be compact, and we
define
$\operatorname{cap}(K)=\int_0^T\int_\mathcal{O}\,d\nu^v_K$.
\item Let $O\subset
[0,T[\times\mathcal{O}$ be open, and we define $\operatorname{cap}(O)=\sup\{
\operatorname{cap}(K); K\subset O\ \mathrm{compact}\}$.\vadjust{\goodbreak}
\item {\spaceskip=0.19em plus 0.05em minus 0.02em For any Borelian
$E\subset[0,T[\times\mathcal{O}$, we define $\operatorname{cap}(E)=\inf\{\operatorname{cap}(O); O\supset E\
\mathrm{open}\}$.}
\end{itemize}
\end{definition}

%de4 #&#
\begin{definition}A property is said to hold quasi-everywhere (in
short,~q.e.)
if it holds outside a set of null capacity.
\end{definition}
%
%de5 #&#
\begin{definition}[(Quasi-continuity)]
A function $u\dvtx [0,T[\times\mathcal{O}\rightarrow\mathbb{R}$
is called quasi-continuous, if there exists a decreasing sequence of open
subsets $O_n$ of $[0,T[\times\mathcal{O}$ with the following:
\begin{longlist}[(1)]
\item[(1)] for all $n$, the restriction of $u_n$ to the complement of $O_n$ is
continuous;
\item[(2)]$\lim_{n\rightarrow+\infty}\operatorname{cap} (O_n)=0$.
\end{longlist}
We say that $u$ admits a quasi-continuous version, if there exists
$\tilde{u}$ quasi-continuous such that $\tilde{u}=u$ a.e.
\end{definition}
The next proposition, whose proof may be found in \cite{Pierre} or
\cite
{PIERRER}, shall play an important role in the sequel:
%
%pr3 #&#
\begin{proposition}\label{Versiont} Let $K\subset\cO$ be a compact
set, then $\forall t\in[0,T[$,
\[
\operatorname{cap} \bigl(\{ t\}\times K\bigr)=\lambda_d (K),
\]
where $\lambda_d$ is the Lebesgue measure on $\cO$.

As a consequence, if $u\dvtx [0,T[\times\cO\rightarrow\R$ is a map
defined quasi-everywhere, then it defines uniquely a map from $[0,T[$
into $L^2 (\cO)$.
In other words, for any $t\in[0,T[$, $u_t$ is defined without any
ambiguity as an element in $L^2 (\cO)$.
Moreover, if $u\in\mathcal{P}$, it admits version $\bar{u}$ which is
left continuous on $[0,T]$ with values in $L^2 (\cO)$ so that $u_T
=\bar{u}_{T^-}$ is also defined without ambiguity.
\end{proposition}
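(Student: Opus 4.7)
The plan is to establish the three assertions in turn, with the capacity identity $\operatorname{cap}(\{t\}\times K)=\lambda_d (K)$ carrying essentially all the content; the slice-wise well-definedness of quasi-everywhere maps then follows immediately, and the existence of the left-continuous version is already part of Proposition \ref{presentation}.

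For the upper bound $\operatorname{cap}(\{t\}\times K)\leq \lambda_d (K)$ I would produce an explicit competitor in $\mathscr{S}_{\{t\}\times K}$. Fix $\eta>0$, choose $\phi \in \mathcal{C}^2_c (\cO )$ with $\phi \geq 1$ on an open neighborhood $U\supset K$ and $\int_\cO \phi \, dx \leq \lambda_d (K)+\eta$, and for small $\epsilon>0$ set
\[ v_\epsilon (s,x)=\ind_{[t-\epsilon ,T]}(s)\bigl(e^{-(s-t+\epsilon )A}\phi\bigr)(x).\]
An integration by parts of the type performed just after Proposition \ref{presentation} shows that $v_\epsilon \in \mathcal{P}$ with $\int_0^T \int_\cO d\nu^{v_\epsilon}=\int_\cO \phi \, dx$, and the strong continuity of $e^{-sA}$ on smooth data ensures that for $\epsilon$ small enough $v_\epsilon \geq 1$ on an open neighborhood of $\{t\}\times K$, so $v_\epsilon \in \mathscr{S}_{\{t\}\times K}$. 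Sending $\eta \downarrow 0$ gives the bound.

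The reverse inequality is the main technical point. I would test the representation of Proposition \ref{presentation} against
\[ \varphi_\delta (s,x)=\psi (x)\int_s^T \eta_\delta (r)\, dr ,\]
where $\psi \in \mathcal{C}^2_c (\cO )$ satisfies $\ind_K \leq \psi \leq \ind_U$ and $\eta_\delta \in \mathcal{C}^\infty_c ((t-\delta ,t+\delta ))$ is a smooth mollifier of $\delta_t$, so that $\varphi_\delta \in \mathcal{W}_T\cap \mathcal{C}$ with $\varphi_\delta \leq 1$ and $-\partial_s \varphi_\delta =\eta_\delta \otimes \psi$. For any $v\in \mathscr{S}_{\{t\}\times K}$ approximated in $L^2 (0,T;H_0^1 )$ by $v_n \in \mathcal{P}$ with $v_n \geq 1$ on a neighborhood of $\{t\}\times K$, the defining identity of Proposition \ref{presentation} combined with $\varphi_\delta \leq 1$ and $v_n \geq \psi$ on $\operatorname{supp}(\eta_\delta \otimes \psi)$ for $\delta$ small yields
\[ \int_0^T\int_\cO d\nu^{v_n}\geq \int_0^T \eta_\delta (s)(\psi , v_n (s))\, ds +\int_0^T \Bigl(\int_s^T \eta_\delta \Bigr)\mathcal{E}(\psi ,v_n (s))\, ds \geq \int_\cO \psi\, dx + R_{n,\delta},\]
where the energy remainder $R_{n,\delta}$ is absorbed by a careful double limit in $(n,\delta)$ exploiting the lower semicontinuity of $v\mapsto \int d\nu^v$ along $L^2 (0,T;H_0^1 )$-convergence proved in \cite{Pierre}. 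Letting $\psi \nearrow \ind_K$ finally produces $\operatorname{cap}(\{t\}\times K)\geq \lambda_d (K)$.

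The two remaining assertions then follow at once: if $u$ is defined outside a set $N\subset [0,T[\times \cO$ of zero parabolic capacity, then for every $t\in [0,T[$ the slice $N_t =\{x\in \cO :(t,x)\in N\}$ satisfies $\lambda_d (N_t )=\operatorname{cap}(\{t\}\times N_t )\leq \operatorname{cap}(N)=0$, so $u_t$ is uniquely defined as an element of $L^2 (\cO )$; and the existence of the left-continuous version $\bar u$ of $u\in \mathcal{P}$, which also gives sense to $u_T =\bar u_{T^-}$, is precisely the content of Proposition \ref{presentation}. The principal obstacle throughout is the delicate limit argument in the lower bound, where Pierre's detailed analysis of the cone of parabolic potentials is essential.
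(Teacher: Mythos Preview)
The paper does not give a proof of this proposition at all: the sentence introducing it reads ``The next proposition, whose proof may be found in \cite{Pierre} or \cite{PIERRE}'', and the authors simply quote the result. So there is no argument in the paper to compare your attempt against; you are supplying what the authors deferred to the references.

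Your upper bound is essentially sound. The competitor $v_\epsilon$ satisfies $\partial_s v_\epsilon + Av_\epsilon = \delta_{t-\epsilon}\otimes \phi\,dx$ in the sense of Proposition~\ref{presentation}, so indeed $\int d\nu^{v_\epsilon} = \int_\cO\phi\,dx$; the only missing detail is that with merely measurable $a^{ij}$ one needs Aronson's Gaussian bounds (invoked elsewhere in the paper) to get $e^{-rA}\phi\to\phi$ uniformly on compacts and hence $v_\epsilon\geq 1$ on a neighbourhood of $\{t\}\times K$.

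The lower bound, however, has a real gap. With your test function one obtains
\[
\int d\nu^{v_n}\ \geq\ \int_0^T \eta_\delta(s)\,(\psi,v_n(s))\,ds \;+\; \int_0^T\Bigl(\int_s^T\eta_\delta\Bigr)\,\mathcal{E}(\psi,v_n(s))\,ds,
\]
and the second integral---your $R_{n,\delta}$---has no sign; sending $\delta\to 0$ turns it into $\int_0^t\mathcal{E}(\psi,v_n(s))\,ds$, which can be negative and need not vanish. There is also a hidden dependence: the assertion ``$v_n\geq\psi$ on $\operatorname{supp}(\eta_\delta\otimes\psi)$'' forces $\operatorname{supp}\psi$ to lie in the spatial neighbourhood where $v_n\geq 1$, so $\psi$ must depend on $n$, which conflicts with fixing $\psi$ and sending $n\to\infty$. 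Finally, the lower semicontinuity $\int d\nu^{v}\leq\liminf_n\int d\nu^{v_n}$ goes in the wrong direction: a lower bound on each $\int d\nu^{v_n}$ does not transfer to $\int d\nu^{v}$ for a general $v\in\mathscr{S}_{\{t\}\times K}$. In Pierre's treatment the lower bound is obtained not by passing through approximants but by working directly with the capacitary potential $v_K$, whose associated measure is supported on $\{t\}\times K$; your closing sentence correctly flags this as the crux, but the ``careful double limit'' as written does not close the argument.
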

%
%re3 #&#
\begin{remark} The previous proposition applies if, for example, $u$ is
quasi-continuous.
\end{remark}

%pr4 #&#
\begin{proposition}[(Theorem III.1 in \cite{PIERRER})]\label{approximation}
If $\varphi\in\mathcal{W}$, then it admits a unique quasi-continuous
version that we denote by $\tilde{\varphi}$. Moreover, for all $v\in
\mathcal{P}$,
the following relation holds:
\[
\int_{[0,T[\times\cO} \tilde{\varphi}\,d\nu^v =\int
_0^T ( -\partial_t \varphi,v )+
\mathcal{E}(\varphi,v) \,dt + ( \varphi_T,v_{T} ).
\]
\end{proposition}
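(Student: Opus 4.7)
I would approximate $\varphi\in\mathcal{W}$ by smooth functions and extract a quasi-uniformly convergent subsequence. The key analytic input is a capacity--energy inequality of the form
$$ cap\bigl(\{(t,x)\in[0,T[\times\cO:\,|\psi(t,x)|>\lambda\}\bigr)\leq\frac{C}{\lambda^{2}}\|\psi\|_{\mathcal{W}}^{2},\qquad \psi\in\mathcal{W}, $$
obtained by dominating the indicator of the level set by an explicit parabolic potential and invoking the variational definition of $cap$. Given this estimate, one picks $\varphi_n\in\mathcal{D}$ with $\|\varphi_n-\varphi\|_{\mathcal{W}}\to 0$ and extracts a subsequence $(\varphi_{n_k})$ with $\|\varphi_{n_{k+1}}-\varphi_{n_k}\|_{\mathcal{W}}\leq 4^{-k}$. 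Setting $O_N=\bigcup_{k\geq N}\{|\varphi_{n_{k+1}}-\varphi_{n_k}|>2^{-k}\}$, the inequality and a Borel--Cantelli argument give $cap(O_N)\to 0$, while $(\varphi_{n_k})$ is uniformly Cauchy on $O_N^c$; its limit $\tilde\varphi$ is then a quasi-continuous representative of $\varphi$. Uniqueness is immediate: two quasi-continuous versions that agree a.e.\ can differ only on a set of null capacity.

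\textbf{The formula for smooth $\varphi$.} I would first treat $\varphi\in\mathcal{W}\cap\mathcal{C}([0,T]\times\cO)$, where $\tilde\varphi=\varphi$. Pick a smooth cutoff $\eta_\epsilon:[0,T]\to[0,1]$ equal to $1$ on $[0,T-\epsilon]$ and vanishing at $T$, so that $\eta_\epsilon\varphi\in\mathcal{W}_T\cap\mathcal{C}$. Proposition~\ref{presentation} then yields
$$ \int_0^T\!\!\int_{\cO}\eta_\epsilon\varphi\,d\nu^v=-\!\int_0^T\!\bigl(\eta_\epsilon\partial_t\varphi+\eta_\epsilon'\varphi,\,v\bigr)\,dt+\int_0^T\!\eta_\epsilon\,\mathcal{E}(\varphi,v)\,dt. $$
Letting $\epsilon\to 0$, the left side tends to $\int_{[0,T[\times\cO}\varphi\,d\nu^v$ by dominated convergence against the positive Radon measure $\nu^v$; the first and third terms on the right converge to the announced bilinear expressions by $\mathcal{W}$-continuity; and the critical term $-\int_0^T\eta_\epsilon'(\varphi,v)\,dt$, whose kernel concentrates near $T$ with total mass $-1$, converges to $(\varphi_T,v_T)$ thanks to the left-continuity of $\bar v$ ensured by Proposition~\ref{presentation}.

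\textbf{Extension and main obstacle.} For general $\varphi\in\mathcal{W}$ I would take $\varphi_n\in\mathcal{D}$ with $\varphi_n\to\varphi$ in $\mathcal{W}$ and pass to the limit in the formula just established. The right-hand side is stable under $\mathcal{W}$-limits by continuity of the bilinear form together with the continuity of the terminal-trace map $\mathcal{W}\to L^{2}(\cO)$ (which is built into the embedding $\mathcal{W}\hookrightarrow C([0,T];L^{2}(\cO))$ recalled after the definition of $\mathcal{W}$). Convergence of the left side, $\int\tilde\varphi_n\,d\nu^v\to\int\tilde\varphi\,d\nu^v$, uses that $\nu^v$ does not charge sets of null capacity---a consequence of its variational construction as the measure associated with a parabolic potential---combined with the quasi-uniform convergence $\tilde\varphi_n\to\tilde\varphi$ produced in the first step. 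The hardest step is precisely the capacity--energy inequality underlying the first step: ordinary embedding theorems give only $\mathcal{W}\hookrightarrow C([0,T];L^{2}(\cO))$, which is too weak to govern pointwise behaviour, so one must appeal to Pierre's parabolic potential theory to obtain the required pointwise control.
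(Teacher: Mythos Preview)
The paper does not prove this proposition: it is stated as a quotation of Theorem~III.1 from Pierre \cite{PIERRE}, with no argument given. So there is no ``paper's own proof'' to compare your proposal against.

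Your sketch is a reasonable reconstruction of the standard route in parabolic potential theory, and it correctly isolates the decisive step: a capacity inequality controlling $cap(\{|\psi|>\lambda\})$ by a norm of $\psi$. Note, however, that the inequality actually available in the paper (Lemma~\ref{cap}, i.e.\ Pierre's Lemma~3.3) bounds capacity by the $\mathcal{K}$-norm of a dominating \emph{potential}, not directly by the $\mathcal{W}$-norm of $\psi$. Bridging this gap---producing, from $\psi\in\mathcal{W}$, a parabolic potential that dominates $|\psi|/\lambda$ with $\mathcal{K}$-norm controlled by $\|\psi\|_{\mathcal{W}}$---is exactly where one must invoke Pierre's machinery (the construction $\kappa(\cdot,\cdot)$ and the estimates behind it). You acknowledge this obliquely in your last paragraph, but if you were writing out a self-contained proof you would need to make that step explicit; it is not a one-line consequence of the variational definition of capacity.

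One small technical point in your middle step: to apply dominated convergence to $\int \eta_\epsilon\varphi\,d\nu^v$ you need $|\varphi|$ to be $\nu^v$-integrable, which is not yet known at that stage for merely continuous $\varphi$; it is cleaner to first restrict to $\varphi\in\mathcal{D}$ (compact support in $[0,T[\times\cO$), where finiteness against the Radon measure $\nu^v$ is automatic, and only then pass to the $\mathcal{W}$-closure.
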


%s3.2 #&#
\subsection{Applications to PDEs with obstacle}
For any function $\psi\dvtx [0,T[\times\mathcal{O}\rightarrow\bbR$ and
$u_0\in L^2(\mathcal{O})$, following M. Pierre
\cite{Pierre,PIERRER}, F. Mignot and J.P. Puel \cite{MignotPuel}, we
define
%
%e9 #&#
\begin{equation}
\label{kappa}\kappa(\psi,u_0)= \operatorname{ess} \inf \bigl\{u\in\mathcal{P}; u\geq
\psi\mbox{ a.e.}, u(0)\geq u_0\bigr\}.
\end{equation}
This lower bound exists and is an element in
$\mathcal{P}$. Moreover, when $\psi$ is \mbox{quasi-continuous},\vadjust{\goodbreak} this
potential is the solution of the following reflected
problem:
\begin{eqnarray*}
\kappa\in\mathcal{P}, \kappa\geq\psi,\qquad \frac{\partial\kappa}{\partial t}+A\kappa=0\qquad \mbox{on } \{u>\psi\},\qquad
\kappa(0)=u_0.
\end{eqnarray*}
Mignot and Puel have proved in
\cite{MignotPuel} that $\kappa(\psi,u_0)$ is the limit [increasingly
and weakly in $L^2 ([0,T]; H^1_0 (\cO))$] when $\varepsilon$ tends to $0$
of the solution of the following penalized equation:
\begin{eqnarray*}
u_\varepsilon\in\mathcal{W}, \qquad u_\varepsilon(0)=u_0,\qquad
\frac
{\partial u_\varepsilon}{\partial t}+ Au_\varepsilon-\frac{(u_\varepsilon-\psi)^-}{\varepsilon}=0.
\end{eqnarray*}
Let us point out that they obtain this result in the more general
case where $\psi$ is only measurable from $[0,T[$ into
$L^2(\mathcal{O})$.

For given $f\in L^2([0,T];H^{-1}(\mathcal{O}))$,
we denote by $\kappa_{u_0}^f$ the solution of the following
problem:
\[
\kappa\in\mathcal{W},\qquad \kappa(0)=u_0,\qquad \frac{\partial\kappa}{\partial t}+A\kappa=f.
\]
The
next theorem ensures existence and uniqueness of the
solution of parabolic PDE with obstacle; it is proved in
\cite{Pierre}, Theorem 1.1. The proof is based on a regularization
argument of the obstacle, using the results of \cite{CT75}.

%th2 #&#
\begin{theorem}
Let $\psi\dvtx [0,T[\times\mathcal{O}\rightarrow\mathbb{R}$ be
quasi-continuous, suppose that there exists $\zeta\in\mathcal{P}$
with $|\psi|\leq\zeta$ a.e., $f\in L^2([0,T];H^{-1}(\mathcal{O}))$,
and the initial value $u_0\in L^2(\mathcal{O})$ with
$u_0\geq\psi(0)$, then there exists a unique
$u\in\kappa_{u_0}^f+\mathcal{P}$ quasi-continuous such that
\begin{eqnarray*}
u(0)=u_0, \tilde{u}\geq\psi,\mbox{ q.e.};\qquad \int_0^T
\int_\mathcal{O}(\tilde{u}-\tilde{\psi})\,d\nu^{u-\kappa_{u_0}^f}=0.
\end{eqnarray*}
\end{theorem}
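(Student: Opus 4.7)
The plan is to reduce to the case of zero source and initial condition, construct solutions by the Mignot–Puel penalization when the obstacle is regular, handle a general quasi-continuous obstacle by monotone approximation, and then prove uniqueness by an energy estimate that exploits the Skorohod condition.

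\textbf{Reduction.} I set $w := u - \kappa_{u_0}^f$. By Proposition \ref{approximation}, $\kappa_{u_0}^f \in \mathcal{W}$ has a quasi-continuous version $\widetilde{\kappa_{u_0}^f}$, so the problem becomes: find $w \in \mathcal{P}$ quasi-continuous with $w(0) = 0$, $\tilde w \geq \bar\psi := \psi - \widetilde{\kappa_{u_0}^f}$ q.e., and $\int_{[0,T[\times\cO} (\tilde w - \tilde{\bar\psi})\, d\nu^w = 0$. The hypothesis $|\psi| \leq \zeta \in \mathcal{P}$ together with $\kappa_{u_0}^f \in \mathcal{W}$ ensures that $\bar\psi$ is quasi-continuous and dominated from above by a parabolic potential.

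\textbf{Existence.} If $\bar\psi$ is already smooth (say $\bar\psi \in \mathcal{W}$), the Mignot–Puel penalized problem
$$w_\epsilon(0) = 0,\qquad \frac{\partial w_\epsilon}{\partial t} + A w_\epsilon = \frac{1}{\epsilon}(w_\epsilon - \bar\psi)^-$$
is well posed and, by the result recalled in the excerpt, $w_\epsilon$ increases to $w := \kappa(\bar\psi, 0) \in \mathcal{P}$ weakly in $L^2([0,T]; H^1_0(\cO))$. The nonnegative sources $\mu_\epsilon := \epsilon^{-1}(w_\epsilon - \bar\psi)^-$ are bounded in $L^1([0,T]\times\cO)$ uniformly in $\epsilon$ (by comparison with a dominating potential) and, reading off the penalized equation, converge weakly as Radon measures on $[0,T[\times\cO$ to $\nu^w$. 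Testing $\mu_\epsilon$ against $w_\epsilon - \bar\psi$ yields $\int \mu_\epsilon(w_\epsilon - \bar\psi) = -\epsilon \int \mu_\epsilon^2 \leq 0$, and passing to the limit with the help of Proposition \ref{approximation} and the quasi-continuity of $\bar\psi$ (which gives $\int \bar\psi\, d\mu_\epsilon \to \int \tilde{\bar\psi}\, d\nu^w$, since $\nu^w$ does not charge polar sets) produces the Skorohod equality. For a general quasi-continuous $\bar\psi$, I approximate by a monotone sequence $\bar\psi_n \in \mathcal{W}$ using the regularization technique of \cite{CT75}, solve the regularized obstacle problem, and pass to the limit using the comparison principle inherent in the definition of $\kappa(\cdot, 0)$ as an essential infimum.

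\textbf{Uniqueness and main difficulty.} Given two solutions $w_1, w_2$, the expected energy identity is
$$\tfrac{1}{2}\|w_1(T) - w_2(T)\|^2 + \int_0^T \mathcal{E}(w_1 - w_2)\, dt = \int_{[0,T[\times\cO} (\tilde w_1 - \tilde w_2)\, d(\nu^{w_1} - \nu^{w_2}),$$
and the Skorohod equalities $\int \tilde w_i\, d\nu^{w_i} = \int \tilde{\bar\psi}\, d\nu^{w_i}$, combined with $\tilde w_j \geq \tilde{\bar\psi}$ q.e.\ and the fact that the regular measures $\nu^{w_i}$ do not charge polar sets, make the right-hand side $\leq 0$, forcing $w_1 = w_2$. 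I expect the main difficulty to be twofold: (i) justifying this identity rigorously, since $w_1 - w_2$ is only a difference of two parabolic potentials and is not a priori in $\mathcal{W}$, which forces one to work with penalized approximants $w_{i,\epsilon} \in \mathcal{W}$ and appeal to Proposition \ref{approximation} before passing to the limit; and (ii) checking that the Skorohod condition survives the transition from regular to general obstacles in the existence step, for which the quasi-continuous version of the obstacle is essential, the contact set $\{\tilde w = \tilde{\bar\psi}\}$ being defined only up to polar sets.
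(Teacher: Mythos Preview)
The paper does not supply its own proof of this theorem: it is quoted from Pierre \cite{Pierre}, Theorem 1.1, with only the one-line indication that ``the proof is based on a regularization argument of the obstacle, using the results of \cite{CT75}.'' Your sketch is consistent with that description: you reduce to zero data, treat regular obstacles by the Mignot--Puel penalization (exactly the device the paper recalls just before the statement), pass to general quasi-continuous obstacles by the Charrier--Troianiello regularization \cite{CT75}, and close with an energy argument for uniqueness. So at the level of strategy you are aligned with what the paper says Pierre does.

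Two remarks on the details you flag. First, in the reduction step your claim that $\bar\psi=\psi-\widetilde{\kappa_{u_0}^f}$ is dominated above by a parabolic potential needs a word: $\zeta-\kappa_{u_0}^f$ is not in $\mathcal{P}$ in general, but since $\kappa_{u_0}^f\in\mathcal{W}$ one can absorb it into a potential via the decomposition of $\mathcal{W}$ into differences of potentials (this is standard in \cite{PIERRE}). Second, your identification of the ``main difficulty'' is accurate: the energy identity for a difference $w_1-w_2$ of two parabolic potentials is exactly Lemma II.6 in \cite{Pierre} (invoked later in the present paper in the proof of Lemma \ref{crucial}), and it is indeed proved by working with approximants in $\mathcal{W}$ and using Proposition \ref{approximation}. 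So your plan is correct and essentially the one the cited reference follows.
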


%Then $u\in\mathcal{W}\cap(\kappa_{u_0}^0+\mathcal{P})$ and we have
%$$\frac{\partial u}{\partial t}+Au\leq(\frac{\partial\psi}{\partial
%t}+A\psi)^+ in L^2(0,T;H^{-1}(\mathcal{O})).$$
%Moreover, $u$ is unique element of $\mathcal{W}$ such that
%$$u(0)=u_0\vee\psi(0), \int_0^T(\frac{\partial u}{\partial t}+Au,u-
%Then one approximate the quasi-continuous obstacle by the elements
%in $\breve{\mathcal{W}}$.
%
We end this section by a convergence lemma which plays an important
role in our approach (Lemma 3.8 in \cite{PIERRER}):
%
%le1 #&#
\begin{lemma}\label{convergemeas}
If $(v^n)_n\in\mathcal{P}$ is a bounded sequence in $\mathcal{K}$ and
converges weakly to~$v$ in $L^2([0,T];H_0^1(\mathcal{O})), and $ if
$u$ is
a quasi-continuous function and $|u|$ is bounded by a element in
$\mathcal{P}$, then
\[
\lim_{n\rightarrow+\infty}\int_0^T\int
_\mathcal{O}u\,d\nu^{v^n}=\int_0^T
\int_\mathcal{O}u\,d\nu^{v}.
\]
\end{lemma}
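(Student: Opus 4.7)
The plan is to establish the convergence first on a dense class of smooth test functions via the representation formula of Proposition \ref{approximation}, then to extend it to the general quasi-continuous $u$ by an approximation argument, using the domination $|u| \le \zeta \in \mathcal{P}$ to ensure the uniform integrability required at the limiting step.

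\textbf{Step 1 (test with elements of $\mathcal{W}_T$).} For any $\varphi \in \mathcal{W}_T$, Proposition \ref{approximation} gives
\[
\int_{[0,T[\times\mathcal{O}} \tilde{\varphi}\, d\nu^{v^n}
= \int_0^T \bigl(-\partial_t\varphi,\, v^n\bigr)\, dt + \int_0^T \mathcal{E}(\varphi,\, v^n)\, dt,
\]
and similarly for $\nu^v$. Since $\partial_t\varphi \in L^2([0,T];H^{-1}(\mathcal{O}))$ and $A^{1/2}\varphi \in L^2([0,T]\times\mathcal{O})$, the right-hand side is a continuous linear functional in $v^n$ with respect to the topology of $L^2([0,T];H^1_0(\mathcal{O}))$. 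Hence the assumed weak convergence $v^n \rightharpoonup v$ in this space yields
\[
\int \tilde{\varphi}\, d\nu^{v^n} \longrightarrow \int \tilde{\varphi}\, d\nu^v \qquad \forall \varphi \in \mathcal{W}_T.
\]

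\textbf{Step 2 (uniform control of the mass against $\zeta$).} Let $\zeta \in \mathcal{P}$ be the given majorant of $|u|$. To obtain a uniform estimate on $\int |u|\, d\nu^{v^n}$, approximate $\zeta$ by a sequence $\zeta_\varepsilon \in \mathcal{W}$ (for instance the Mignot--Puel penalized approximations recalled in the preceding subsection, or a time-convolution regularization) with $\zeta_\varepsilon \to \zeta$ in $\mathcal{K}$ and $\tilde{\zeta}_\varepsilon \to \tilde{\zeta}$ quasi-everywhere, and cut off near $t=T$ so that the approximants lie in $\mathcal{W}_T$. Step 1 applied to $\zeta_\varepsilon$ gives
\[
\int \tilde{\zeta}_\varepsilon\, d\nu^{v^n} \le C\bigl(\|\zeta_\varepsilon\|_{\mathcal{W}}\bigr)\, \|v^n\|_{\mathcal{K}},
\]
which is uniformly bounded in $n$ by the assumption on $(v^n)$. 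Passing first $n\to\infty$ (against $\nu^{v^n}$) and then $\varepsilon \to 0$ (using that $\tilde{\zeta}_\varepsilon\uparrow \tilde{\zeta}$ q.e.) yields the uniform bound $\sup_n \int \tilde{\zeta}\, d\nu^{v^n} < \infty$, and similarly for $\nu^v$.

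\textbf{Step 3 (approximation of the quasi-continuous $u$).} Given a quasi-continuous $u$ with $|u|\le \tilde{\zeta}$ q.e., choose a sequence $\varphi_k \in \mathcal{W}_T$ whose quasi-continuous representatives $\tilde{\varphi}_k$ converge to $u$ quasi-everywhere (such a sequence exists because quasi-continuous functions bounded by a potential lie in the closure of $\{\tilde{\varphi}:\varphi \in \mathcal{W}_T\}$ for convergence in capacity; this follows from the density used by Pierre in the construction of the capacity, together with truncation by $\pm \tilde{\zeta}$). Since capacity dominates the measures $\nu^{v^n}$ and $\nu^v$ (sets of zero capacity are $\nu^{v^n}$-null for every $n$ and also $\nu^v$-null), the convergence $\tilde{\varphi}_k \to u$ holds $\nu^{v^n}$-a.e. for every $n$ and $\nu^v$-a.e.; moreover after truncation we may assume $|\tilde{\varphi}_k|\le \tilde{\zeta}$. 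Combining Step 1 (for fixed $k$, $n\to\infty$) and Step 2 (uniform domination by the integrable majorant $\tilde{\zeta}$) via a standard $\varepsilon/3$ argument using dominated convergence against each $\nu^{v^n}$ and $\nu^v$ yields the claim.

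\textbf{Main obstacle.} Step 1 is immediate from the tools already in hand, but the real work is in Step 3: one must produce quasi-continuous approximants of $u$ that converge in a mode strong enough to pass to the limit against the whole family $\{\nu^{v^n}\}$ simultaneously. This is where the domination $|u|\le \zeta$ combined with the uniform bound of Step 2 becomes essential, for without it there is no way to interchange the $k\to\infty$ and $n\to\infty$ limits. Correctly calibrating the approximation (quasi-continuity, truncation by $\tilde{\zeta}$, vanishing at $t=T$) is the delicate point.
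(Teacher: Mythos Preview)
The paper does not actually prove this lemma: it simply records it as Lemma 3.8 in Pierre's s\'eminaire notes \cite{PIERRE} and refers the reader there. So there is no ``paper's own proof'' to compare against, only Pierre's original argument.

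Your outline has the right architecture (Step~1 is exactly the representation formula of Proposition~\ref{approximation} and is correct as written), but Steps~2 and~3 both contain genuine gaps.

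In Step~2 the order-of-limits argument is incoherent. From Step~1 you obtain
\[
\int \tilde\zeta_\varepsilon\, d\nu^{v^n}\ \le\ C\bigl(\|\zeta_\varepsilon\|_{\mathcal W}\bigr)\,\|v^n\|_{\mathcal K},
\]
but the constant blows up with $\|\zeta_\varepsilon\|_{\mathcal W}$, and a general parabolic potential $\zeta$ is \emph{not} in $\mathcal W$, so $\|\zeta_\varepsilon\|_{\mathcal W}\to\infty$ is the generic situation. Sending $n\to\infty$ first gives information only about $\nu^v$, never about $\sup_n\int\tilde\zeta\,d\nu^{v^n}$. What is actually needed (and what Pierre proves) is a \emph{bilinear} estimate of the type $\int\tilde w\,d\nu^v\le C\|w\|_{\mathcal K}\|v\|_{\mathcal K}$ for $w,v\in\mathcal P$, obtained from the energy identity for pairs of potentials (this is the content of his Lemma~II.6 and its corollaries), not from the $\mathcal W$-representation.

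In Step~3 the $\varepsilon/3$ argument requires $\int|u-\tilde\varphi_k|\,d\nu^{v^n}\to 0$ as $k\to\infty$ \emph{uniformly in $n$}. Pointwise q.e.\ convergence together with domination by $\tilde\zeta$ gives, via dominated convergence, only the limit for each fixed $n$; it does not give uniformity, because the family $\{\nu^{v^n}\}$ need not be tight. The correct mechanism, which you have not supplied, is a uniform absolute-continuity estimate of $\nu^{v^n}$ with respect to parabolic capacity (again a consequence of the bilinear bound above): one shows that for any open $O$, $\nu^{v^n}(O)\le C\,\|v^n\|_{\mathcal K}\,\mathrm{cap}(O)^{1/2}$, and then exploits the quasi-continuity of $u$ to localise the error on a set of small capacity, uniformly in $n$. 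Your sentence ``such a sequence exists because\ldots'' hides precisely this missing quantitative input.

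In short: Step~1 is fine, but both the uniform mass bound and the uniform approximation step rest on a potential-theoretic bilinear inequality that you have neither stated nor proved; without it the interchange of limits in Step~3 is unjustified.
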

%
%re4 #&#
\begin{remark}For the more general case one can see \cite{PIERRER},
Lemma 3.8.
\end{remark}

%s4 #&#
\section{Quasi-continuity of the solution of SPDE without
obstacle}\label{quasi-contSPDE}
As a consequence of well-known results (see, e.g., \cite{DenisStoica},
Theorem 8), we know that under Assumptions \ref{assH} and \ref{assI},
SPDE (\ref{SPDE}) with zero Dirichlet boundary condition admits a
unique solution in $\mathcal{H}_T$ (for the definition of solution see,
e.g., Definition~1 in \cite{DenisStoica}); we denote it by
$\mathcal{U}(\xi,f,g,h)$.
The main theorem of this section is
the following:\vadjust{\goodbreak}
%
%th3 #&#
\begin{theorem}\label{mainquasicontinuity} Under
Assumptions \textup{\ref{assH}} and \textup{\ref{assI}}, $u=\mathcal{U}(\xi,f,g,h)$ the
solution of SPDE (\ref{SPDE}) admits a quasi-continuous version
denoted by $\tilde{u}$, that is,
$u=\tilde{u}$ $dP\otimes dt\otimes dx $-a.e. and for almost all $w\in
\Omega$,
$(t,x)\rightarrow\tilde{u}_t
(w,x)$ is quasi-continuous.
\end{theorem}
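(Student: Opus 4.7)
The proof is pathwise: I would decompose $u(\omega,\cdot,\cdot)$ into a parabolic $\mathcal{W}$-part whose quasi-continuity is immediate from Proposition \ref{approximation}, and a stochastic-convolution remainder handled by time-regularization combined with Mignot--Puel's obstacle existence for merely measurable obstacles.

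Concretely, recalling $A = -\partial_i(a_{ij}\partial_j)$, I rewrite \eqref{SPDE} distributionally as $\partial_t u + Au = f(t,u,\nabla u) + \partial_i g_i(t,u,\nabla u) + \sum_j h_j(t,u,\nabla u)\,\dot B^j_t$, and define $v$ pathwise as the unique solution in $\mathcal{W}$ of
$$
\partial_t v + Av = f(t,u,\nabla u) + \partial_i g_i(t,u,\nabla u),\qquad v(0)=\xi.
$$
Assumptions \textbf{(H)} and \textbf{(I)} together with $u\in\mathcal{H}_T$ ensure the right-hand side lies in $L^2([0,T];H^{-1}(\cO))$ for almost every $\omega$, so classical deterministic parabolic theory places $v(\omega,\cdot,\cdot)$ in $\mathcal{W}$ and Proposition \ref{approximation} supplies a quasi-continuous version $\tilde v$. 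The remainder $Z := u - v$ lies pathwise in $L^2([0,T];H^1_0(\cO))$ (since both $u$ and $v$ do) and formally satisfies $\partial_t Z + AZ = \sum_j h_j\,dB^j/dt$ with $Z(0)=0$; it is \emph{not} in $\mathcal{W}$, precisely because of the noise in its time derivative, so Proposition \ref{approximation} does not apply to it directly.

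The core step is then the quasi-continuity of $Z$. I would approximate by smoothing in time (e.g.\ Steklov averages $Z^n_t := n\int_{(t-1/n)^+}^t Z_s\,ds$, or equivalently solve the equation driven by mollified Brownian paths $B^{j,n}$), producing $Z^n \in \mathcal{W}$ pathwise and hence quasi-continuous versions $\tilde Z^n$ from Proposition \ref{approximation}. To pass to the limit, I need $\{\tilde Z^n\}$ to be Cauchy in capacity. This is where Mignot--Puel enters: for each pair $n,m$ the merely measurable obstacle $\psi^{n,m} := |Z^n - Z^m|:[0,T[\to L^2(\cO)$ admits a smallest majorant $\kappa^{n,m} := \kappa(\psi^{n,m},0)\in\mathcal{P}$ in the sense of \eqref{kappa}, realized as the weak $L^2([0,T];H^1_0)$-limit of the associated penalized equations. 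Since $|\tilde Z^n - \tilde Z^m|\leq\kappa^{n,m}$ quasi-everywhere, a Chebyshev-type estimate gives
$$
\mathrm{cap}\bigl(\{|\tilde Z^n - \tilde Z^m|>\eta\}\bigr) \;\leq\; \frac{1}{\eta}\int_0^T\!\!\int_\cO d\nu^{\kappa^{n,m}},
$$
so it suffices to show the right-hand side tends to $0$ in probability as $n,m\to\infty$. This would follow by combining energy estimates for the penalized equations (which bound $\|\kappa^{n,m}\|_{\mathcal{K}}$ by $\|Z^n - Z^m\|$ in suitable norms) with a Burkholder--Davis--Gundy estimate applied to $Z^n - Z^m$. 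The resulting quasi-everywhere limit $\tilde Z$ is the quasi-continuous version of $Z$, and $\tilde u := \tilde v + \tilde Z$ then proves the theorem.

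The main obstacle I expect is precisely this capacity Cauchy estimate: one must control the Mignot--Puel potentials $\kappa^{n,m}$ by $L^2$-type norms of the obstacle increments $Z^n - Z^m$, while simultaneously translating pathwise bounds on the noise approximation into convergence that survives the $\omega$-dependent construction of the quasi-continuous versions $\tilde Z^n$. This ``pathwise meets potential-theoretic'' coupling, as the authors emphasize in the introduction, is the genuinely new ingredient of the argument and is what forces one to leave the more standard Pierre framework (which requires a quasi-continuous obstacle) for the Mignot--Puel setting.
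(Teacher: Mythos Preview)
Your architecture is correct and matches the paper's: approximate by quasi-continuous functions, dominate the increments by Mignot--Puel potentials $\kappa$, bound capacities of the bad sets via $\|\kappa\|_{\mathcal{K}}$, and control the latter by an energy argument on the penalized equations. The gap is in the specific approximation you propose, and it occurs precisely at the step you flag as the obstacle.

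With Steklov averages or mollified Brownian paths, the increment $w:=Z^n-Z^m$ is pathwise in $\mathcal{W}$, and the natural deterministic energy estimate for $\kappa(w,w^+(0))$ (test the penalized equation for $v^N$ against $v^N-w$) produces a term $\langle \partial_t w + Aw,\, v^N-w\rangle_{H^{-1},H^1_0}$; this is controlled only by $\|w\|_{\mathcal{W}}$, and $\|Z^n-Z^m\|_{\mathcal{W}}$ does \emph{not} tend to zero --- that failure is exactly the non-membership of $Z$ in $\mathcal{W}$. Your BDG argument gives convergence of $Z^n-Z^m$ in $\mathcal{K}$-type norms, but not of $\partial_t(Z^n-Z^m)$ in $L^2_tH^{-1}$, so the bound on $\|\kappa^{n,m}\|_{\mathcal{K}}$ does not close.

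The paper sidesteps this by regularizing the \emph{data} with the semigroup: $u_0^n=P_{1/n}u_0$, $f^n=P_{1/n}f(\cdot,u,\nabla u)$, $g_i^n=P_{1/n}g_i(\cdot,u,\nabla u)$, $h_j^n=P_{1/n}h_j(\cdot,u,\nabla u)$, and lets $u^n$ solve the (now linear) SPDE with these data. Aronson's heat-kernel regularity makes each $u^n$ genuinely continuous in $(t,x)$. The crucial point is that $u^{n+1}-u^n=\mathcal{U}(u_0^{n+1}-u_0^n,\,f^{n+1}-f^n,\,g^{n+1}-g^n,\,h^{n+1}-h^n)$ is again an SPDE solution, with data that are small in $L^2$. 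The key estimate (Lemma~\ref{estimoftau}) is then proved by applying It\^o's formula to $(v^N-w)^2$ with $w=u^{n+1}-u^n$: because $w$ solves an SPDE, its time derivative contributes a martingale (handled by BDG) plus the $f,g,h$ data terms, \emph{not} a raw $\partial_t w$ pairing, and one obtains
\[
E\|\kappa(w,w^+(0))\|_{\mathcal{K}}^2 \leq C\Bigl(E\|u_0^{n+1}-u_0^n\|^2 + E\int_0^T\|f^{n+1}_t-f^n_t\|^2+\||g^{n+1}_t-g^n_t|\|^2+\||h^{n+1}_t-h^n_t|\|^2\,dt\Bigr).
\]
Extracting a fast subsequence and taking $\epsilon=1/n^2$ then gives $E[\mathrm{cap}(\Theta_p)]\to 0$.

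So the fix is to choose an approximation that keeps the increments as SPDE solutions with small $L^2$ data; once you do that, your decomposition $u=v+Z$ becomes unnecessary, since the semigroup regularization already treats $f$, $g$, $h$ uniformly.
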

Before giving the proof of this theorem, we need the following lemmas.
The first one is proved in \cite{PIERRER}, Lemma 3.3:
%
%le2 #&#
\begin{lemma}\label{cap}There exists $C>0$ such that, for all open set
$\vartheta\subset[0,T[\times\mathcal{O}$ and $v\in\mathcal{P}$ with
$v\geq1$ a.e. on $\vartheta$,
\[
\operatorname{cap}\vartheta\leq C\Vert v\Vert^2_{\mathcal{K}}.
\]
\end{lemma}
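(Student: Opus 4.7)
The plan is to reduce to compact subsets of $\vartheta$ and exploit the given $v$ as a supersolution whose $\mathcal{K}$-energy controls the capacity. By definition $cap(\vartheta)=\sup\{cap(K):K\subset\vartheta \text{ compact}\}$, so it suffices to prove $cap(K)\leq C\parallel v\parallel_{\mathcal{K}}^2$ for every compact $K\subset\vartheta$. Fix such a $K$. Because $\vartheta$ is a neighborhood of $K$ on which $v\geq 1$ a.e., the constant sequence $v_n\equiv v$ witnesses $v\in\mathscr{S}_K$. Proposition 2.1 of \cite{PIERRE} then furnishes the smallest potential $v_K\in\mathcal{P}$, whose regular measure $\nu^{v_K}$ is supported in $K$ and has total mass exactly $cap(K)$.

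To transfer the $\mathcal{K}$-norm bound from $v$ to $v_K$, I would work with the Mignot--Puel penalized approximations $u_\varepsilon\in\mathcal{W}$ solving $\partial_t u_\varepsilon+Au_\varepsilon-\varepsilon^{-1}(u_\varepsilon-\psi)^-=0$, where $\psi$ is the indicator of a suitable neighborhood of $K$ inside $\vartheta$; these converge weakly to $v_K$ in $L^2([0,T];H_0^1(\mathcal{O}))$. Since $v$ satisfies $v\geq\psi$ and $\partial_t v+Av\geq 0$ distributionally, testing the penalized equation against $(u_\varepsilon-v)^+$ kills both the elliptic and penalization contributions where $u_\varepsilon>v$ and yields $u_\varepsilon\leq v$ pointwise. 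Standard parabolic energy estimates combined with this bound give $\parallel u_\varepsilon\parallel_{\mathcal{K}}\leq C\parallel v\parallel_{\mathcal{K}}$ uniformly in $\varepsilon$.

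Applying Proposition \ref{approximation} with $\varphi=u_\varepsilon\in\mathcal{W}$ and using the identity $\int_0^T(-\partial_t u_\varepsilon,u_\varepsilon)\,dt=\tfrac{1}{2}\parallel u_\varepsilon(0)\parallel^2-\tfrac{1}{2}\parallel u_\varepsilon(T)\parallel^2$, one obtains
$$\int\tilde{u}_\varepsilon\,d\nu^{u_\varepsilon}=\tfrac{1}{2}\parallel u_\varepsilon(0)\parallel^2+\tfrac{1}{2}\parallel u_\varepsilon(T)\parallel^2+\int_0^T\mathcal{E}(u_\varepsilon)\,dt\leq C\parallel u_\varepsilon\parallel_{\mathcal{K}}^2\leq C'\parallel v\parallel_{\mathcal{K}}^2.$$
Since $v_K$ admits a quasi-continuous version $\tilde{v}_K$ which is $\geq 1$ q.e. on $K$ (upgrading the a.e. bound on the neighborhood via quasi-continuity), and regular measures do not charge sets of zero capacity, $cap(K)=\nu^{v_K}(K)\leq\int\tilde{v}_K\,d\nu^{v_K}$. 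Passing to the limit $\varepsilon\to 0$ via Lemma \ref{convergemeas} then transfers the displayed energy bound from the left-hand side to $\int\tilde{v}_K\,d\nu^{v_K}$, which closes the estimate.

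The hard part will be this limit passage: $\nu^{v_K}$ is generally not an element of $L^2([0,T];H^{-1}(\mathcal{O}))$, so $v_K\notin\mathcal{W}$ and Proposition \ref{approximation} cannot be applied to it directly. All explicit integration-by-parts identities must therefore be performed at the level of $u_\varepsilon\in\mathcal{W}$, and one must separately verify both the convergence of $\nu^{u_\varepsilon}$ required by Lemma \ref{convergemeas} and the upgrade of the a.e. bound $v_K\geq 1$ on a neighborhood of $K$ to the q.e. bound $\tilde{v}_K\geq 1$ on $K$ itself through the quasi-continuity framework of Section \ref{capacity}.
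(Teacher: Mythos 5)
The paper does not actually prove Lemma \ref{cap}: it is imported verbatim from \cite{PIERRE} (Lemma 3.3), so there is no internal argument to compare yours against. Judged on its own, your reconstruction has the right architecture --- reduce to a compact $K\subset\vartheta$, note $v\in\mathscr{S}_K$, control $cap(K)=\int d\nu^{v_K}$ through the energy of a dominating potential, and transfer the $\mathcal{K}$-bound from $v$ to the penalized approximations by comparison --- and the identity $\int\tilde{u}_\varepsilon\,d\nu^{u_\varepsilon}=\tfrac12\|u_\varepsilon(0)\|^2+\tfrac12\|u_\varepsilon(T)\|^2+\int_0^T\mathcal{E}(u_\varepsilon)\,dt$ is indeed the correct quantitative engine.

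There are, however, genuine gaps beyond the ones you flag. First, the limit passage cannot be delegated to Lemma \ref{convergemeas}: that lemma keeps the integrand fixed while the measures vary, whereas you need $\int\tilde{u}_\varepsilon\,d\nu^{u_\varepsilon}\to\int\tilde{v}_K\,d\nu^{v_K}$ with both varying. The repair is to avoid the pairing at the limit altogether: the identity $\int\tilde{w}\,d\nu^{w}=\tfrac12\|w_0\|^2+\tfrac12\|w_T\|^2+\int_0^T\mathcal{E}(w_s)\,ds$ holds for every $w\in\mathcal{P}$ (Lemma II.6 of \cite{Pierre}, which this paper itself uses in Lemma \ref{crucial}), so one applies it directly to the limit potential and concludes by weak lower semicontinuity of the right-hand side together with your uniform bound $\|u_\varepsilon\|_{\mathcal{K}}\leq C\|v\|_{\mathcal{K}}$. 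Second, the step $cap(K)=\nu^{v_K}(K)\leq\int\tilde{v}_K\,d\nu^{v_K}$ requires $\tilde{v}_K\geq 1$ $\nu^{v_K}$-a.e.; this is not a routine quasi-continuity upgrade of an a.e.\ bound, because $v_K$ is only a weak limit of potentials exceeding $1$ on shrinking neighborhoods and the parabolic capacity is not symmetric in time --- establishing this is one of the substantive points of \cite{PIERRE} and cannot be assumed. Finally, two smaller repairs: the Mignot--Puel penalization with obstacle $\mathbf{1}_N$ converges to the r\'eduite of $\mathbf{1}_N$, not to $v_K$ (either send $N\downarrow K$ or, more simply, use the infimum characterization of $cap(K)$ in Proposition 2.1, which lets any single element of $\mathscr{S}_K$ with controlled mass conclude), and the comparison $u_\varepsilon\leq v$ needs the initial datum of the penalized problem specified and the pairing $\langle\partial_t v,(u_\varepsilon-v)^+\rangle$ justified via Proposition \ref{approximation}, since $v\notin\mathcal{W}$.
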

Let $\kappa=\kappa(u,u^+(0))$ be defined by relation \eqref{kappa}. One
has to note that $\kappa$ is a random function. From now on, we always
take for $\kappa$ the following measurable version
\[
\kappa=\sup_n v^n,
\]
where $(v^n)_n$ is the nondecreasing sequence of random functions given by
%
%e10 #&#
%e11 #&#
\begin{eqnarray}
\label{eq1} \cases{ %
\displaystyle\frac{\partial v_t^n}{\partial t}=Lv_t^n+n
\bigl(v_t^n-u_t\bigr)^-,
\vspace*{2pt}\cr
 \displaystyle v^n_0=u^+(0). }
\end{eqnarray}
Using the results recalled in Section \ref{capacity}, we know that
for almost all $w\in\Omega$, $v^n (w)$ converges weakly to
$v(w)=\kappa
(u(w),u^+(0)(w))$ in
$L^2([0,T];H_0^1(\mathcal{O}))$ and that $v\geq u$.
%
%le3 #&#
\begin{lemma}\label{estimoftau} We have the following estimate:
\begin{eqnarray*}
E\Vert \kappa\Vert_{\mathcal{K}}^2 \leq C \biggl(E\bigl\Vert
u_0^+\bigr\Vert^2+E\Vert u_0\Vert^2+E
\int_0^T\bigl\Vert f_t^0
\bigr\Vert^2+\bigl\Vert \bigl|g_t^0\bigr|\bigr\Vert^2+\bigr\Vert
\bigl|h_t^0\bigr|\bigr\Vert^2\,dt \biggr),
\end{eqnarray*}
where $C$ is a constant depending only on the structure constants of
the equation.
\end{lemma}
\begin{pf} All along this proof, we shall denote by $C$ or
$C_{\varepsilon} $ some constant which may change from line to line.

The following estimate for the solution of the SPDE we consider is well known:
%
%e12 #&#
\begin{eqnarray}
\label{estimateu}&&E\sup_{t\in[0,T]}\Vert u_t\Vert^2+E
\int_0^T\mathcal{E}(u_t)\,dt
\nonumber
\\[-8pt]
\\[-8pt]
\nonumber
&&\qquad\leq CE
\biggl(\Vert u_0\Vert^2+\int_0^T
\bigl(\bigl\Vert f_t^0\bigr\Vert^2+\bigl\Vert
\bigl|g_t^0\bigr|\bigr\Vert^2+\bigl\Vert \bigl|h_t^0\bigr|
\bigr\Vert^2\bigr)\,dt\biggr),
\end{eqnarray}
where $C$ is a constant depending only on the structure constants of
the equation.

Consider the approximation $(v^n )_n$ defined by \eqref{eq1},
$P$-almost surely, it converges weakly to $v=\kappa(u,u^+(0))$ in
$L^2([0,T];H_0^1(\mathcal{O}))$.

We remark that $v^n-u$ satisfies
the following
equation:
\begin{eqnarray*}
&& d\bigl(v^n_t-u_t\bigr)+A
\bigl(v_t^n-u_t\bigr)\,dt\\
&&\qquad=-f_t(u_t,
\nabla u_t)\,dt-\sum_{i=1}^d
\partial_ig^i_t(u_t,\nabla
u_t)\,dt\\
&&\qquad\quad{}-\sum_{j=1}^{+\infty}h^j_t(u_t,
\nabla u_t)\,dB^j_t+n\bigl(v_t^n-u_t
\bigr)^-\,dt.
\end{eqnarray*}
Applying It\^o's
formula to $(v^n-u)^2$ (see Lemma 7 in \cite{DMS05}), we have almost
surely, for all $t\in[0,T]$,
%
%e13 #&#
\begin{eqnarray}
\label{original}
&&\bigl\Vert v_t^n-u_t
\bigr\Vert^2+2\int_0^t\mathcal{E}
\bigl(v_s^n-u_s\bigr)\,ds\nonumber\\
&&\qquad=\bigl\Vert
u^+_0-u_0\bigr\Vert^2-2\int_0^t
\bigl(v_s^n-u_s,f_s(u_s,
\nabla u_s)\bigr)\,ds
\nonumber
\\[-8pt]
\\[-8pt]
\nonumber
&&\quad\qquad{}+2\sum_{i=1}^d\int_0^t
\bigl(\partial_i\bigl(v_s^n-u_s
\bigr),g^i_s(u_s,\nabla u_s)
\bigr)\,ds+\int_0^t\bigl\Vert \bigl|h_s(u_s,
\nabla u_s)\bigr|\bigr\Vert^2\,ds
\\
&&\qquad\quad{}-2\sum_{j=1}^{+\infty}\int_0^t
\bigl(v_s^n-u_s,h^j_s(u_s,
\nabla u_s)\bigr)\,dB^j_s+2\int
_0^t\bigl(n\bigl(v_s^n-u_s
\bigr)^-,v_s^n-u_s\bigr)\,ds.\nonumber
\end{eqnarray}
The
last term in the right member of (\ref{original}) is obviously
nonpositive, so
%
%e14 #&#
\begin{eqnarray}
\label{originalineq}&&\bigl\Vert v_t^n-u_t
\bigr\Vert^2+2\int_0^t\mathcal{E}
\bigl(v_s^n-u_s\bigr)\,ds\nonumber\\
&&\qquad\leq\bigl\Vert
u^+_0-u_0\bigr\Vert^2-2\int_0^t
\bigl(v_s^n-u_s,f_s(u_s,
\nabla u_s)\bigr)\,ds
\nonumber
\\[-8pt]
\\[-8pt]
\nonumber
&&\qquad\quad{}+\int_0^t\bigl\Vert \bigl|h_s(u_s,
\nabla u_s)\bigr|\bigr\Vert^2\,ds+2\sum_{i=1}^d
\int_0^t\bigl(\partial_i
\bigl(v_s^n-u_s\bigr),g^i_s(u_s,
\nabla u_s)\bigr)\,ds
\\
&&\qquad\quad{}-2\sum_{j=1}^{+\infty}\int_0^t
\bigl(v_s^n-u_s,h^j_s(u_s,
\nabla u_s)\bigr)\,dB^j_s\qquad \mbox{a.s.}\nonumber
\end{eqnarray}

Then taking
expectation and using Cauchy--Schwarz's inequality, we get
\begin{eqnarray*}
&&E\bigl\Vert v_t^n-u_t\bigr\Vert^2+
\biggl(2-\frac{\varepsilon
}{\lambda}\biggr)E\int_0^t
\mathcal{E}\bigl(v_s^n-u_s\bigr)\,ds\\
&&\qquad\leq E
\bigl\Vert u_0^+-u_0\bigr\Vert^2+E\int
_0^t\bigl\Vert v_s^n-u_s
\bigr\Vert^2\,ds
\\
&&\qquad\quad{}+E\int_0^t\bigl\Vert f_s(u_s,
\nabla u_s)\bigr\Vert^2\,ds+C_\varepsilon E\int
_0^t\bigl\Vert \bigl|g_s(u_s,
\nabla u_s)\bigr|\bigr\Vert^2\,ds\\
&&\qquad\quad{}+E\int_0^t
\bigl\Vert \bigl|h_s(u_s,\nabla u_s)\bigr|
\bigr\Vert^2\,ds.
\end{eqnarray*}
Therefore, by using the Lipschitz conditions on the coefficients, we
have
\begin{eqnarray*}
&&E\bigl\Vert v_t^n-u_t\bigr\Vert^2+
\biggl(2-\frac{\varepsilon
}{\lambda}\biggr)E\int_0^t
\mathcal{E}\bigl(v_s^n-u_s\bigr)\,ds\\
&&\qquad\leq E
\bigl\Vert u_0^+-u_0\bigr\Vert^2+E\int
_0^t\bigl\Vert v_s^n-u_s
\bigr\Vert^2\,ds
\\
&&\qquad\quad{}+CE\int_0^t\bigl(\bigl\Vert f_s^0
\bigr\Vert^2+\bigl\Vert \bigl|g_s^0\bigr|\bigr\Vert^2+
\bigl\Vert \bigl|h_s^0\bigr|\bigr\Vert^2\bigr)\,ds+CE\int
_0^t\Vert u_s\Vert^2\,ds\\
&&\qquad\quad{}+
\biggl(\frac{C}{\lambda}+\frac{\alpha}{\lambda}+\frac{\beta^2}{\lambda
}\biggr)E\int
_0^t\mathcal{E}(u_s)\,ds.
\end{eqnarray*}
Combining with (\ref{estimateu}), this yields
\begin{eqnarray*}
&&E\bigl\Vert v_t^n-u_t\bigr\Vert^2+
\biggl(2-\frac{\varepsilon
}{\lambda}\biggr)E\int_0^t
\mathcal{E}\bigl(v_s^n-u_s\bigr)\,ds\\
&&\qquad\leq E
\bigl\Vert u_0^+-u_0\bigr\Vert^2+E\int
_0^t\bigl\Vert v_s^n-u_s
\bigr\Vert^2\,ds
\\
&&\quad\qquad{}+ CE\biggl(\Vert u_0\Vert^2+\int_0^T
\bigl(\bigl\Vert f_t^0\bigl\Vert^2+\bigl\Vert
\bigl|g_t^0\bigr|\bigr\Vert^2+\bigl\Vert\bigl |h_t^0\bigr|
\bigr\Vert^2\bigr)\,dt\biggr).
\end{eqnarray*}
We take now $\varepsilon$
small enough such that $(2-\frac{\varepsilon}{\lambda})>0$, then, with
Gronwall's lemma, we obtain for each $t\in[0,T]$
\begin{eqnarray*}
&&E\bigl\Vert v_t^n-u_t\bigr\Vert^2\\
&&\qquad\leq
Ce^{c'T}\biggl(E\bigl\Vert u_0^+-u_0
\bigr\Vert^2+E\Vert u_0\Vert^2+E\int
_0^T\bigl\Vert f_t^0
\bigr\Vert^2+\bigl\Vert \bigl|g_t^0\bigr|\bigr\Vert^2+\bigl\Vert
\bigr|h_t^0\bigr|\bigr\Vert^2\,dt\biggr).
\end{eqnarray*}
As we a priori know that $P$-almost surely, $(v^n)_n$ tends to $\kappa$
strongly in $L^2 ([0,T]\times\cO)$, the previous estimate yields,
thanks to the dominated convergence theorem, that $(v^n )_n$ converges to
$\kappa$ strongly in $L^2 (\Omega\times[0,T]\times\cO)$ and
\begin{eqnarray*}
&&\sup_{t\in[0,T]}E\Vert \kappa_t-u_t\Vert^2
\\
&&\qquad\leq Ce^{c'T}\biggl(E\bigl\Vert u_0^+-u_0
\bigr\Vert^2+E\Vert u_0\Vert^2+E\int
_0^T\bigl\Vert f_t^0
\bigr\Vert^2+\bigl\Vert \bigl|g_t^0\bigr|\bigr\Vert^2+\bigl\Vert
\bigl|h_t^0\bigr|\bigr\Vert^2\,dt\biggr).
\end{eqnarray*}
Moreover, as $(v^n)_n$ tends to $\kappa$ weakly in $L^2 ([0,T]; H^1_0
(\cO))$ $P$-almost-surely, we have for all $t\in[0,T]$,
\begin{eqnarray*}
&&\hspace*{-6pt}E \int_0^T \mathcal{E}(\kappa_s
-u_s)\,ds\\
&&\hspace*{-6pt}\qquad\leq \liminf_n E\int_0^T
\mathcal{E}\bigl(v_s^n-u_s\bigr)\,ds
\\
&&\hspace*{-6pt}\qquad\leq TCe^{c'T}\biggl(E\bigl\Vert u_0^+-u_0
\bigr\Vert^2+E\Vert u_0\Vert^2+E\int
_0^T\bigl\Vert f_t^0
\bigr\Vert^2+\bigl\Vert \bigl|g_t^0\bigr|\bigr\Vert^2+\bigl\Vert
\bigl|h_t^0\bigr|\bigr\Vert^2\,dt\biggr).
\end{eqnarray*}

Let us now study the stochastic term in \eqref{originalineq}. Let us
define the martingales
\[
M^n_t =\sum_{j=1}^{+\infty}
\int_0^t\bigl(v_s^n-u_s,h^j_s
\bigr)\,dB^j_s \quad\mbox{and} \quad M_t=\sum
_{j=1}^{+\infty}\int_0^t
\bigl(\kappa_s-u_s,h^j_s
\bigr)\,dB^j_s.
\]
Then
\begin{eqnarray*}
&&E\bigl[\bigl|M^n_T -M_T\bigr|^2\bigr]\\
&&\qquad=E
\int_0^T \sum_{j=1}^{+\infty}
\bigl(\kappa_s -v^n_s,h_s
\bigr)^2 \,ds\\
&&\qquad\leq E\int_0^T\bigl\Vert
\kappa_s-v^n_s\bigr\Vert^2\bigl\Vert
\bigl|h_s\bigr|\bigr\Vert^2\,ds.
\end{eqnarray*}
Using the strong convergence of $(v^n )_n$ to $\kappa$, we conclude
that $(M^n)_n$ tends to $M$ in the $L^2$ sense.
Passing to the limit in \eqref{originalineq}, we get, almost surely,
for all $t\in[0,T]$,
\begin{eqnarray*}
&&\Vert \kappa_t-u_t\Vert^2+ 2
\int_0^t\mathcal {E}(\kappa_s
-u_s)\,ds\\
&&\qquad\leq\bigl\Vert u^+_0-u_0
\bigr\Vert^2-2\int_0^t\bigl(
\kappa_s-u_s,f_s(u_s,\nabla
u_s)\bigr)\,ds
\\
&&\qquad\quad{}+2\sum_{i=1}^d\int_0^t
\bigl(\partial_i(\kappa_s -u_s),g^i_s(u_s,
\nabla u_s)\bigr)\,ds
\\
&&\qquad\quad{}- 2\sum_{j=1}^{+\infty}\int_0^t
\bigl(\kappa_s-u_s,h^j_s(u_s,
\nabla u_s)\bigr)\,dB^j_s\\
&&\qquad\quad{}+\int
_0^t\bigl\Vert \bigl|h_s(u_s,
\nabla u_s)\bigr|\bigr\Vert^2\,ds.
\end{eqnarray*}

As a consequence of Burkholder--Davies--Gundy's inequalities, we get
\begin{eqnarray*}
&&E\sup_{t\in[0,T]}\biggl|\sum_{j=1}^{+\infty}\int
_0^t\bigl(\kappa_s-u_s,h^j_s
(u_s,\nabla u_s )\bigr)\,dB^j_s\biggr|\\
&&\qquad\leq CE\Biggl[\int_0^T\sum
_{j=1}^{+\infty}\bigl(\kappa_s-u_s,h^j_s(u_s,
\nabla u_s)\bigr)^2\,ds\Biggr]^{1/2}
\\
&&\qquad\leq CE\Biggl[\int_0^T\sum
_{j=1}^{+\infty}\sup_{t\in[0,T]}\Vert
\kappa_t-u_t\Vert^2\bigl\Vert h^j_s
(u_s,\nabla u_s )\bigr\Vert^2\,ds
\Biggr]^{1/2}
\\
&&\qquad\leq CE\biggl[\sup_{t\in[0,T]}\Vert \kappa_t-u_t\Vert
\biggl(\int_0^T\bigl\Vert \bigl|h_t
(u_t,\nabla u_t )\bigr|\bigr\Vert^2\,dt
\biggr)^{1/2}\biggr]
\\
&&\qquad\leq\varepsilon E\sup_{t\in[0,T]}\Vert \kappa_t -u_t
\Vert^2+C_\varepsilon E\int_0^T
\bigl\Vert \bigl|h_t (u_t,\nabla u_t )\bigr|
\bigr\Vert^2\,dt.
\end{eqnarray*}
By Lipschitz conditions
on $h$ and (\ref{estimateu}) this yields
\begin{eqnarray*}
&& E\sup_{t\in[0,T]}\Biggl|\sum_{j=1}^{+\infty}\int
_0^t\bigl(\kappa_s-u_s,h_s
(u_s,\nabla u_s )\bigr)\,dB_s\Biggr|\\
&&\qquad\leq
\varepsilon E\sup_{t\in[0,T]}\Vert \kappa_t-u_t
\Vert^2\\
&&\qquad\quad{}+C\biggl(E\Vert u_0\Vert^2+E\int_0^T\bigl(\bigl\Vert f_t^0
\bigr\Vert^2+\bigl\Vert\bigl |g_t^0\bigr|\bigr\Vert^2+\bigl\Vert
\bigl|h_t^0\bigr|\bigr\Vert^2\bigr)\,dt\biggr).
\end{eqnarray*}
Hence,
\begin{eqnarray*}
&&(1-\varepsilon)E\sup_{t\in[0,T]}\Vert \kappa_t-u_t
\Vert^2+\biggl(2-\frac{\varepsilon}{\lambda}\biggr)E\int_0^T
\mathcal {E}(\kappa_t-u_t)\,dt\\
&&\qquad\leq C\biggl(E\bigl\Vert
u_0^+-u_0\bigr\Vert^2+E\Vert u_0
\Vert^2+E\int_0^T\bigl\Vert f_t^0
\bigr\Vert^2 +\bigl\Vert \bigl|g_t^0\bigr|\bigr\Vert^2+
\bigl\Vert \bigl|h_t^0\bigr|\bigr\Vert^2\,dt\biggr).
\end{eqnarray*}
We can take $\varepsilon$ small
enough such that $1-\varepsilon>0$ and $2-\frac{\varepsilon}{\lambda}>0$,
hence,
\begin{eqnarray*}
&&E\sup_{t\in[0,T]}\Vert \kappa_t -u_t
\Vert^2+E\int_0^T\mathcal{E}(
\kappa_t-u_t)\,dt\\
&&\qquad\leq C\biggl(E\bigl\Vert u_0^+-u_0
\bigr\Vert^2+E\Vert u_0\Vert^2
+E\int_0^T\bigl\Vert f_t^0
\bigr\Vert^2+\bigl\Vert\bigl |g_t^0\bigr|\bigr\Vert^2+\bigl\Vert
\bigl|h_t^0\bigr|\bigr\Vert^2\,dt\biggr).
\end{eqnarray*}
Then, combining with
(\ref{estimateu}), we get the desired estimate:
\begin{eqnarray*}
&&E\sup_{t\in[0,T]}\Vert \kappa_t\Vert^2+E\int
_0^T\mathcal{E}(\kappa_t)\,dt\\
&&\qquad\leq C
\biggl(E\bigl\Vert u_0^+\bigr\Vert^2+E\Vert u_0
\Vert^2+E\int_0^T\bigl\Vert
f_t^0\bigr\Vert^2+\bigl\Vert \bigl|g_t^0\bigr|
\bigr\Vert^2+\bigl\Vert \bigl|h_t^0\bigr|\bigr\Vert^2\,dt
\biggr).
\end{eqnarray*}
\upqed\end{pf}

\begin{pf*}{Proof of Theorem \ref{mainquasicontinuity}} For simplicity,
we put
\begin{eqnarray*}
f_t (x)&=&f\bigl(t,x,u_t (x),\nabla u_t (x)
\bigr),\qquad g_t (x)=g\bigl(t,x,u_t (x),\nabla u_t
(x)\bigr)\quad\mbox{and} \\
 h_t (x)&=&h\bigl(t,x,u_t (x),
\nabla u_t (x)\bigr).
\end{eqnarray*}
We introduce $(P_t )$ the semi-group associated to operator $A$ and put
for each $n\in\mathbb{N}^*$, $i\in\{ 1,\ldots,d\}$ and each $j\in
\N^*$,
\[
u_0^n=P_{{1}/{n}}u_0,\qquad
f^n=P_{{1}/{n}}f,\qquad g_i^n=P_{{1}/{n}}g_i\quad
\mbox{and}\quad h_j^n=P_{{1}/{n}}h_j.
\]
Then $(u_0^n )_n$ converges to $u_0$ in
$L^2(\Omega;L^2(\mathcal{O}))$, and $(f^n )_n$, $(g^n )_n$ and $(h^n
)_n$ are
sequences of elements in $L^2(\Omega\times[0,T];\mathcal{D}(A))$
which converge, respectively, to~$f$, $g$ and $h$ in
$L^2(\Omega\times[0,T];L^2(\mathcal{O}))$. For all $n\in\mathbb{N}^*$
we define
\begin{eqnarray*}
u_t^n&=&P_tu_0^n+
\int_0^tP_{t-s}f_s^n\,ds+
\sum_{i=1}^d \int_0^tP_{t-s}
\partial_i g_{i,s}^n\,ds+\sum
_{j=1}^{+\infty}\int_0^tP_{t-s}h_{j,s}^n\,dB^j_s
\\
&=&P_{t+({1}/{n})}u_0+\int_0^tP_{t+({1}/{n})-s}f_s\,ds+
\sum_{i=1}^d \int_0^tP_{t+({1}/{n})-s}
\partial_i g_{i,s}\,ds\\
&&{}+\sum_{j=1}^{+\infty
}
\int_0^tP_{t+({1}/{n})-s}h_{j,s}\,dB^j_s.
\end{eqnarray*}
We denote by $G(t,x,s,y)$ the kernel associated to $P_t$, then
\begin{eqnarray*}
u^n(t,x)&=&\int_\cO G\biggl(t+
\frac{1}{n},x,0,y\biggr)u_0(y)\,dy\\
&&{}+\int_0^t
\int_\cO G\biggl(t+\frac{1}{n},x,s,y\biggr)f(s,y)\,dy\,ds
\\
&&{}+\sum_{i=1}^d\int_0^t
\int_\cO G\biggl(t+\frac{1}{n},x,s,y\biggr)
\partial_ig_s^{i}(y)\,dy\,ds\\
&&{}+\sum
_{j=1}^{+\infty
}\int_0^t
\int_\cO G\biggl(t+\frac{1}{n},x,s,y
\biggr)h_s^{j}(y)\,dy\,dB^i_s.
\end{eqnarray*}
But, as $A$
is strictly elliptic, $G$ is uniformly continuous in space--time
variables on any compact away from the diagonal in time (see
Theorem 6
in \cite{Aronson}) and satisfies Gaussian estimates (see Aronson \cite
{Aronson3}); this ensures that for all $n\in\bbN^*$, $u^n$ is
$P$-almost surely continuous in $(t,x)$.

We consider a sequence of random open sets
\[
\vartheta_n=\bigl\{\bigl|u^{n+1}-u^n\bigr|>
\varepsilon_n\bigr\},\qquad \Theta_p=\bigcup
_{n=p}^{+\infty}\vartheta_n.
\]
Let
$\kappa_n=\kappa(\frac{1}{\varepsilon_n }(u^{n+1}-u^n),\frac
{1}{\varepsilon_n
}(u^{n+1}-u^n)^+(0))+\kappa(-\frac{1}{\varepsilon_n
}(u^{n+1}-u^n),\frac
{1}{\varepsilon_n }(u^{n+1}-u^n)^-(0))$, and
from the definition of $\kappa$ and the relation (see \cite{PIERRER})
\[
\kappa\bigl(|v|\bigr)\leq\kappa\bigl(v,v^+(0)\bigr)+\kappa\bigl(-v,v^-(0)\bigr),
\]
we know that
$\kappa_n$ satisfy the conditions of Lemma \ref{cap}, that is,
$\kappa_n\in\mathcal{P}$ et $\kappa_n\geq1$ a.e. on $\vartheta_n$,
thus, we get the following relation:
\[
\operatorname{cap} (\Theta_p )\leq\sum_{n=p}^{+\infty}\operatorname{cap}
(\vartheta_n )\leq\sum_{n=p}^{+\infty}
\Vert \kappa_n\Vert^2_{\mathcal{K}}.
\]

Thus, remarking that $u^{n+1}-u^n =\mathcal{U}(u_0^{n+1}-u_0^n,f^{n+1}-f^n,g^{n+1}-g^n, h^{n+1}-h^n )$, we apply Lemma \ref
{estimoftau} to $\kappa(\frac{1}{\varepsilon_n }(u^{n+1}-u^n),\frac
{1}{\varepsilon_n }(u^{n+1}-u^n)^+(0))$ and\break  $\kappa(-\frac{1}{\varepsilon_n
}(u^{n+1}-u^n),\frac{1}{\varepsilon_n }(u^{n+1}-u^n)^-(0))$ and obtain
\begin{eqnarray*}
&&E\bigl[\operatorname{cap} ( \Theta_p)\bigr]\\
&&\qquad\leq\sum_{n=p}^{+\infty}
E\Vert \kappa_n\Vert^2_{\mathcal{K}}\\
&&\qquad\leq 2C\sum
_{n=p}^{+\infty}\frac{1}{\varepsilon_n^2}\biggl(E\bigl\Vert
u_0^{n+1}-u_0^n
\bigr\Vert^2+E\int_0^T\bigl\Vert
f_t^{n+1}-f_t^n\bigr\Vert^2
+\bigl\Vert \bigl|g_t^{n+1}-g_t^n\bigr|
\bigr\Vert^2\\
&&\hspace*{247pt}{}+\bigl\Vert \bigl|h_t^{n+1}-h_t^n\bigr|
\bigr\Vert^2\,dt\biggr).
\end{eqnarray*}

Then, by extracting a subsequence, we can consider that
\begin{eqnarray*}
&&E\bigl\Vert u_0^{n+1}-u_0^n
\bigr\Vert^2+E\int_0^T\bigl\Vert
f_t^{n+1}-f_t^n\bigr\Vert^2+
\bigl\Vert \bigl|g_t^{n+1}-g_t^n\bigr|
\bigr\Vert^2+\bigl\Vert\bigl |h_t^{n+1}-h_t^n\bigr|
\bigr\Vert^2\,dt\\
&&\qquad\leq\frac{1}{2^n}.
\end{eqnarray*}
Then we take $\varepsilon_n =\frac{1}{n^2}$ to get
\[
E\bigl[\operatorname{cap} (\Theta_p)\bigr]\leq\sum_{n=p}^{+\infty}
\frac
{2Cn^4}{2^n}.
\]
Therefore,
\[
\lim_{p\rightarrow+\infty}E\bigl[\operatorname{cap} (\Theta_p)\bigr]=0.
\]
For almost all $\omega\in\Omega$, $u^n(\omega)$ is continuous in
$(t,x)$ on $(\Theta_p (w))^c$ and $(u^n(\omega))_n $ converges
uniformly to
$u$ on $(\Theta_p (w))^c$ for all $p$, hence, $u(\omega)$ is continuous
in $(t,x)$ on $(\Theta_p (w))^c$. Then from the definition of
quasi-continuous, we know that $u(\omega)$ admits a quasi-continuous
version since $\operatorname{cap} (\Theta_p )$ tends to $0$ almost surely as $p$
tends to $+\infty$.
\end{pf*}

%s5 #&#
\section{Existence and uniqueness of the solution of the obstacle problem}
%s5.1 #&#
\subsection{Weak solution}
\renewcommand{\theassumption}{(O)}
\begin{assumption}\label{asso}
The obstacle $S$ is assumed to be an adapted
process, quasi-continuous, such that $S_0 \leq\xi$ $P$-almost surely and
controlled by the solution of a SPDE, that is, $\forall t\in[0,T]$,
%
%e15 #&#
\begin{equation}
S_t\leq S'_t,
\end{equation}
where
$S'$ is the solution of the linear SPDE with Dirichlet boundary condition,
%
%e16 #&#
\begin{equation}\label{obstacle}
\cases{ %
\displaystyle dS'_t=LS'_t\,dt+f'_t\,dt+
\sum_{i=1}^d \partial_i
g'_{i,t}\,dt+\sum_{j=1}^{+\infty}h'_{j,t}\,dB^j_t,
\vspace*{2pt}\cr
S'(0)=S'_0,}
\end{equation}
where $S'_0\in L^2 (\Omega\times\cO)$ is
$\mathcal{F}_0$-measurable, and $f'$, $g'$ and $h'$ are adapted
processes, respectively, in $L^2
([0,T]\times\Omega\times\cO;\mathbb{R})$, $L^2
([0,T]\times\Omega\times\cO;\mathbb{R}^d)$ and $L^2
([0,T]\times\Omega\times\cO;\mathbb{R}^{\mathbb{N}^*})$.
\end{assumption}

%
%re5 #&#
\begin{remark}Here again, we know that $S'$ uniquely exists and
satisfies the following estimate:
%
%e17 #&#
\begin{eqnarray}
\label{estimobstacle}&&E\sup_{t\in[0,T]}\bigl\Vert S'_t
\bigr\Vert^2+E\int_0^T\mathcal{E}
\bigl(S'_t\bigr)\,dt
\nonumber
\\[-8pt]
\\[-8pt]
\nonumber
&&\qquad\leq CE \biggl[\bigl\Vert
S'_0\bigr\Vert^2+\int_0^T
\bigl(\bigl\Vert f'_t\bigr\Vert^2+\bigl\Vert
\bigl|g'_t\bigr|\bigr\Vert^2+\bigl\Vert\bigl |h'_t\bigr|
\bigr\Vert^2\bigr)\,dt \biggr].
\end{eqnarray}
Moreover, from Theorem \ref{mainquasicontinuity}, $S'$ admits a
quasi-continuous version.

Let us also remark that even if this assumption seems restrictive since
$S'$ is driven by the same operator and Brownian motions as $u$, it encompasses
a large class of examples.
\end{remark}
We now are able to define rigorously the notion of the solution to the
problem with obstacle we consider.\vadjust{\goodbreak}
%
%de6 #&#
\begin{definition} A pair
$(u,\nu)$ is said to be a solution of the obstacle problem for
(\ref{SPDE}) with Dirichlet boundary condition if:
\begin{longlist}[(1)]
\item[(1)]$u\in\mathcal{H}_T$ and $u(t,x)\geq S(t,x), dP\otimes
dt\otimes
dx$-a.e. and $u_0(x)=\xi, dP\otimes dx$-a.e.;
\item[(2)]$\nu$ is a random regular measure defined on
$[0,T)\times\mathcal{O}$;
\item[(3)] the following relation holds almost surely, for all
$t\in[0,T]$ and $\forall\varphi\in\mathcal{D}$,
%
%e18 #&#
\begin{eqnarray}
\label{solution}&&(u_t,\varphi_t)-(\xi,
\varphi_0)-\int_0^t(u_s,
\partial_s\varphi_s)\,ds+\int_0^t
\mathcal{E}(u_s,\varphi_s)\,ds\nonumber\\
&&\quad{}+\sum
_{i=1}^d\int_0^t
\bigl(g^i_s(u_s,\nabla u_s),
\partial_i\varphi_s\bigr)\,ds
\nonumber
\\[-8pt]
\\[-8pt]
\nonumber
&&\qquad=\int_0^t\bigl(f_s(u_s,
\nabla u_s),\varphi_s\bigr)\,ds+\sum
_{j=1}^{+\infty
}\int_0^t
\bigl(h^j_s(u_s,\nabla u_s),
\varphi_s\bigr)\,dB^j_s\\
&&\qquad\quad{}+\int
_0^t\int_{\mathcal
{O}}
\varphi_s(x)\nu(dx,ds);\nonumber
\end{eqnarray}
\item[(4)]$u$ admits a quasi-continuous version, $\tilde{u}$, and we have
\[
\int_0^T\int_{\mathcal{O}}\bigl(
\tilde{u}(s,x)-{S}(s,x)\bigr)\nu(dx,ds)=0 \qquad \mbox{a.s.}
\]
\end{longlist}
\end{definition}
The main result of this paper is the following:
%
%th4 #&#
\begin{theorem}{\label{maintheo}}
Under Assumptions \textup{\ref{assH}}, \textup{\ref{assI}} and \textup{\ref{asso}}, there exists a
unique weak solution of the obstacle problem for the SPDE
(\ref{SPDE}) associated to $(\xi, f, g, h, S)$.

We denote by $\mathcal{R}(\xi,f,g,h,S)$ the solution of SPDE
(\ref{SPDE}) with obstacle when it exists and is unique.
\end{theorem}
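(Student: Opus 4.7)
The plan is to prove existence in two stages---first in the \emph{linear} case where the coefficients do not depend on $(u, \nabla u)$, and then for the full equation by a Picard iteration in $\mathcal{H}_T$---and to deduce uniqueness from an It\^o-type formula exploiting the Skorohod condition. The key observation underlying the linear step is that, since the stochastic part of (\ref{SPDE}) is already resolved by the unreflected SPDE solution, the obstacle problem reduces $\omega$-pathwise to a deterministic obstacle problem of the type treated by M.~Pierre in \cite{Pierre}.

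Concretely, in the linear case let $\bar{u} = \mathcal{U}(\xi, f, g, h) \in \mathcal{H}_T$ be the unique solution of the SPDE without obstacle, which admits a quasi-continuous version by Theorem \ref{mainquasicontinuity}. A candidate pair $(u, \nu)$ must satisfy, pathwise, $w := u - \bar{u} \in \mathcal{P}$ together with
\[
\frac{\partial w}{\partial t} + A w = \nu, \quad w(0) = 0, \quad \tilde{w} \geq \widetilde{S - \bar{u}} \ \text{q.e.},
\]
and $\int_0^T \int_{\mathcal{O}} (\tilde{w} - \widetilde{S - \bar{u}}) \, d\nu = 0$. By Assumption (O), $S \leq S'$ where $S'$ is an SPDE solution, hence $S - \bar{u}$ is quasi-continuous and dominated in absolute value by an element of $\mathcal{P}$ built from $|S'| + |\bar{u}|$ via $\kappa$ and controlled through Lemma \ref{estimoftau}. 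Pierre's deterministic obstacle theorem then yields, pathwise, a unique $w(\omega) \in \mathcal{P}$ and an associated regular measure $\nu(\omega)$. Predictability of $(w, \nu)$ is obtained by realizing $w$ as the monotone limit of the Mignot--Puel penalized SPDE solutions, which are themselves manifestly predictable.

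For the full equation we iterate: set $u^{0} = \bar{u}$ and $u^{n+1} = \mathcal{R}(\xi, f(\cdot, u^n, \nabla u^n), g(\cdot, u^n, \nabla u^n), h(\cdot, u^n, \nabla u^n), S)$, which is well defined by the linear step. Applying the It\^o formula (established separately) to $\|u^{n+1} - u^{m+1}\|^2$ produces, besides the usual drift, martingale and quadratic variation terms, a measure contribution
\[
2 \int_0^t \int_{\mathcal{O}} (\tilde{u}^{n+1}_s - \tilde{u}^{m+1}_s) \bigl( \nu^{n+1} - \nu^{m+1} \bigr)(dx, ds),
\]
which is nonpositive since $\tilde{u}^{n+1} = \tilde{S} \leq \tilde{u}^{m+1}$ on $\mathrm{supp}\,\nu^{n+1}$ and symmetrically. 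Combining Assumption (H) with the contraction property $2\alpha + \beta^2 < 2\lambda$ and Burkholder--Davis--Gundy then yields a genuine contraction in an equivalent exponentially-weighted norm on $\mathcal{H}_T$, so $u^n \to u$ in $\mathcal{H}_T$. Lemma \ref{convergemeas} allows one to identify the limit measure $\nu$ and to pass to the limit in the Skorohod condition, using the quasi-continuous version of $u$ provided by Theorem \ref{mainquasicontinuity}.

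For uniqueness, applied to two solutions $(u^1, \nu^1)$ and $(u^2, \nu^2)$, the same It\^o computation shows that the joint measure contribution to $\|u^1 - u^2\|^2$ is nonpositive, and the contraction estimate forces $u^1 = u^2$ and consequently $\nu^1 = \nu^2$. I expect the main technical obstacle to be the rigorous derivation of the It\^o formula for the squared $L^2$-norm of a process driven by the singular random measure $\nu$: giving unambiguous meaning to $\int_0^t \int_{\mathcal{O}} \tilde{u}_s \, \nu(ds, dx)$ demands the quasi-continuous version of $u$ and the integration-by-parts identity of Proposition \ref{approximation}, together with a regularization argument (via the resolvent $(I + \varepsilon A)^{-1}$ or the semigroup smoothing already used in the proof of Theorem \ref{mainquasicontinuity}) and a passage to the limit based on Lemma \ref{convergemeas}.
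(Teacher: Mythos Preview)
Your overall architecture---subtract the unreflected SPDE solution to reduce the linear problem to Pierre's deterministic obstacle theory, then run a Picard iteration controlled by an It\^o formula for the squared difference, with the measure cross-term having a sign thanks to the Skorohod condition---is exactly the paper's strategy.

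The one substantive divergence is in how you propose to obtain the It\^o formula. You suggest a resolvent or semigroup regularization of $u$ followed by Lemma \ref{convergemeas}. The paper instead stays with the penalized approximants $u^n$ (more precisely, convex combinations $\hat{u}^n$ of them that converge strongly in $L^2(\Omega\times[0,T];H^1_0(\mathcal{O}))$), applies the ordinary SPDE It\^o formula to these, and passes to the limit. The delicate step is then Lemma \ref{crucial}, showing $\int (\hat{u}^n - S)^- \, d\hat{\nu}^n \to 0$, which is proved via the energy identity for parabolic potentials and Lemma \ref{convergemeas}; an analogous Lemma \ref{crucial2} handles the cross-terms needed for the formula for differences. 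The advantage of the penalization route is that the approximants already carry the obstacle structure, so the support and sign information survives the limit almost for free; a resolvent smoothing would decouple the approximation from $S$, and you would have to re-identify $\int \tilde{u}\,d\nu$ by some other means. Your approach may well work, but the paper's is more tightly integrated with the existence construction.
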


As the proof of this theorem is quite long, we split it in several
steps: first we prove existence and uniqueness in the linear case, then
establish an It\^o formula and finally prove
the theorem thanks to a fixed point argument.
%s5.2 #&#
\subsection{\texorpdfstring{Proof of Theorem \protect\ref{maintheo} in the linear case}
{Proof of Theorem 4 in the linear case}}\label{subsec52}

All along this subsection, we assume that $f$, $g$ and $h$ do not
depend on $u$ and
$\nabla u$, so we consider that $f$, $g$ and $h$ are adapted processes,
respectively, in $L^2 ([0,T]\times\Omega\times\cO;\mathbb{R})$, $L^2
([0,T]\times\Omega\times\cO;\mathbb{R}^d)$ and $L^2 ([0,T]\times
\Omega
\times\cO;\mathbb{R}^{{\mathbb{N}^*}})$.

For $n\in\mathbb{N}^*$, let $u^n$ be the solution of the
following SPDE:
%
%e19 #&#
\begin{equation}
\label{penalization} du_t^n=Lu_t^n\,dt+f_t\,dt+
\sum_{i=1}^d \partial_i
g_{i,t}\,dt+\sum_{j=1}^{+\infty}h_{j,t}\,dB^j_t+n
\bigl(u_t^n-S_t\bigr)^-\,dt
\end{equation}
with initial condition $u^n_0=\xi$ and null Dirichlet boundary
condition. We know from Theorem 8 in \cite{DenisStoica} that this equation
admits a unique solution in $\mathcal{H}_T$ and that the solution
admits $L^2(\mathcal{O})$-continuous trajectories.

%le4 #&#
\begin{lemma}\label{lemmaestim}
For all $n\in\bbN^*$, $u^n$ satisfies the following
estimate:
\begin{eqnarray*}
E\sup_{t\in[0,T]}\bigl\Vert u_t^n\bigr\Vert^2+E\int
_0^T\mathcal{E}\bigl(u_t^n
\bigr)\,dt+E\int_0^Tn\bigl\Vert \bigl(u_t^n-S_t
\bigr)^-\bigr\Vert^2\,dt\leq C,
\end{eqnarray*}
where $C$ is a constant depending only on the structure constants of
the SPDE.
\end{lemma}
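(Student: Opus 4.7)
The plan is to bypass the obstacle $S$ (which carries no a priori bound on its own) by comparing $u^n$ to the dominating SPDE $S'$ whose estimate \eqref{estimobstacle} is already available. Set $v^n_t := u^n_t - S'_t$. Subtracting \eqref{obstacle} from \eqref{penalization} gives
\begin{equation*}
dv^n_t = Lv^n_t\,dt + (f_t-f'_t)\,dt + \sum_{i=1}^d \partial_i (g_{i,t}-g'_{i,t})\,dt + \sum_{j=1}^{+\infty}(h_{j,t}-h'_{j,t})\,dB^j_t + n(u^n_t-S_t)^-\,dt,
\end{equation*}
with $v^n_0 = \xi - S'_0 \in L^2(\Omega\times\cO)$ and null Dirichlet boundary. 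One can then apply the It\^o formula for the squared $L^2$ norm (Lemma~7 of \cite{DMS05}, as used in the proof of Lemma~\ref{estimoftau}) to $\|v^n_t\|^2$.

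The decisive observation is the sign of the penalization term in this formula. Writing $v^n_s = (u^n_s - S_s) + (S_s - S'_s)$ one has
\begin{equation*}
n\bigl((u^n_s-S_s)^-,\,v^n_s\bigr) = -n\|(u^n_s-S_s)^-\|^2 + n\bigl((u^n_s-S_s)^-,\,S_s-S'_s\bigr),
\end{equation*}
and since $(u^n_s-S_s)^-\ge 0$ while $S_s-S'_s\le 0$ by assumption {\bf (O)}, the second term is non-positive. Thus this term contributes $-2n\int_0^t \|(u^n_s-S_s)^-\|^2\,ds$ to the right-hand side with a favorable sign, which is precisely what allows the penalization quantity to appear on the left of the final estimate.

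Once this sign is exploited, the remaining arguments are routine. I would apply Cauchy--Schwarz and Young's inequality (with a small parameter $\eps$) to absorb the cross terms $2(v^n_s,f_s-f'_s)$ and $2(\partial_i v^n_s,g_{i,s}-g'_{i,s})$ into $\eps\,\mathcal{E}(v^n_s)$ plus $L^2$ terms, using the uniform ellipticity to convert $\|\nabla v^n_s\|^2$ into $\mathcal{E}(v^n_s)/\lambda$. Take expectation to kill the It\^o integrals, then invoke Gronwall to control $\sup_{t\le T}E\|v^n_t\|^2$ and $E\int_0^T\mathcal{E}(v^n_s)\,ds$; the supremum inside the expectation is then recovered by Burkholder--Davis--Gundy on the martingale $\sum_j\int_0^\cdot(v^n_s,h_{j,s}-h'_{j,s})\,dB^j_s$, exactly as in Lemma~\ref{estimoftau}. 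Finally, writing $u^n = v^n + S'$ and invoking \eqref{estimobstacle} for the supremum, energy and $L^2$ norms of $S'$ yields the stated bound with a constant $C$ depending only on $T$, $\lambda$, $\Lambda$, the Lipschitz constants, and the $L^2$ norms of $\xi$, $S'_0$, $f$, $g$, $h$, $f'$, $g'$, $h'$.

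The main obstacle is purely conceptual: without the comparison with $S'$ one cannot control $n\int_0^t((u^n_s-S_s)^-,S_s)\,ds$ uniformly in $n$. The use of $v^n = u^n - S'$ converts the penalization into a manifestly dissipative term, after which the proof is standard energy estimation.
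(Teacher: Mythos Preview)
Your proposal is correct and follows essentially the same approach as the paper: subtract the dominating SPDE $S'$, observe that the penalization term becomes dissipative because $S\le S'$, and then run the standard energy/BDG argument before adding back the estimate \eqref{estimobstacle} for $S'$. The only cosmetic difference is that the paper absorbs the $\int_0^T\|v^n_s\|^2\,ds$ term directly into $\epsilon T\sup_t\|v^n_t\|^2$ rather than invoking Gronwall, which is unnecessary here since $f,g,h$ do not depend on $u^n$.
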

\begin{pf} From
(\ref{penalization}) and (\ref{obstacle}), we know that $u^n-S'$
satisfies the following equation:
\[
d\bigl(u_t^n-S'_t\bigr)=L
\bigl(u_t^n-S'_t\bigr)\,dt+
\tilde{f}_t\,dt+\sum_{i=1}^d
\partial_i\tilde {g}^i_t\,dt+\sum
_{j=1}^{+\infty}\tilde{h}^j_t\,dB^j_t+n
\bigl(u_t^n-S_t\bigr)^-\,dt,
\]
where $\tilde{f}=f-f'$, $\tilde{g}=g-g'$ and $\tilde{h}=h-h'$.
Applying It\^o's formula to $(u^n-S')^2$, we
have
\begin{eqnarray*}
&&\bigl\Vert u_t^n-S'_t
\bigr\Vert^2+2\int_0^t\mathcal{E}
\bigl(u_s^n-S'_s\bigr)\,ds\\
&&\qquad=2
\int_0^t\bigl(\bigl(u_s^n-S'_s
\bigr),\tilde{f}_s\bigr)\,ds+2\sum_{j=1}^{+\infty}
\int_0^t\bigl(\bigl(u_s^n-S'_s
\bigr),\tilde{h}^j_s\bigr)\,dB^j_s
\\
&&\qquad\quad{}-2\sum_{i=1}^d\int_0^t
\bigl(\partial_i\bigl(u_s^n-
\tilde{S}_s\bigr),\tilde {g}^i_s\bigr)\,ds+2
\int_0^t\int_\mathcal {O}
\bigl(u_s^n-S'_s\bigr)n
\bigl(u_s^n-S_s\bigr)^-\,ds\\
&&\qquad\quad{}+\int
_0^t\bigl\Vert \bigl|\tilde{h}_s\bigr|
\bigr\Vert^2\,ds,\qquad \mbox{a.s.}
\end{eqnarray*}
We remark first
\begin{eqnarray*}
&&\int_0^t\int_\mathcal {O}
\bigl(u_s^n-S'_s\bigr)n
\bigl(u_s^n-S_s\bigr)^-\,ds\\
&&\qquad=\int
_0^t\int_\mathcal {O}
\bigl(u_s^n-S_s+S_s-S'_s
\bigr)n\bigl(u_s^n-S_s\bigr)^-\,ds
\\
&&\qquad = -\int_0^t\int_\mathcal{O}n
\bigl(\bigl(u_s^n-S_s\bigr)^-
\bigr)^2\,ds+\int_0^t\int
_\mathcal {O}\bigl(S_s-S'_s
\bigr)n\bigl(u_s^n-S_s\bigr)^-\,dx\,ds;
\end{eqnarray*}
the last term in the right member is nonpositive because $S_t\leq
S'_t$, thus,
\begin{eqnarray*}
&&\bigl\Vert u_t^n-S'_t
\bigr\Vert^2+2\int_0^t\mathcal{E}
\bigl(u_s^n-S'_s\bigr)\,ds+2
\int_0^tn\bigl\Vert \bigl(u_s^n-S
\bigr)^-\bigr\Vert^2\,ds\\
&&\qquad\leq2\int_0^t
\bigl(u_s^n-S'_s,\tilde
{f}_s\bigr)\,ds
-2\sum_{i=1}^d\int_0^t
\bigl(\partial_i\bigl(u_s^n-S'_s
\bigr),\tilde{g}^i_s\bigr)\,ds\\
&&\qquad\quad{}+ 2\sum
_{j=1}^{+\infty}\int_0^t
\bigl(u_s^n-S'_s,
\tilde{h}^j_s\bigr)\,dB^j_s +\int
_0^t\bigl\Vert |\tilde{h}_s|
\bigr\Vert^2\,ds\qquad \mbox{a.s.}
\end{eqnarray*}
Then
using Cauchy--Schwarz's inequality, we have $\forall t\in[0,T]$,
\begin{eqnarray*}
2\biggl|\int_0^t\bigl(u_s^n-S'_s,
\tilde{f}_s\bigr)\,ds\biggr|\leq\varepsilon \int_0^T
\bigl\Vert u_s^n-S'_s
\bigr\Vert^2\,ds+\frac{1}{\varepsilon}\int_0^T
\Vert \tilde{f}_s\Vert^2\,ds
\end{eqnarray*}
and
\begin{eqnarray*}
&&2\Biggl|\sum_{i=1}^d\int_0^t
\bigl(\partial_i\bigl(u_s^n-S'_s
\bigr),\tilde {g}^i_s\bigr)\,ds\Biggr|\\
&&\qquad\leq\varepsilon\int
_0^T\bigl\Vert \nabla \bigl(u_s^n-S'_s
\bigr)\bigr\Vert^2\,ds+\frac{1}{\varepsilon}\int_0^T
\bigl\Vert |\tilde{g}|\bigr\Vert^2\,ds.
\end{eqnarray*}
Moreover, thanks to Burkholder--Davies--Gundy's inequality, we get
\begin{eqnarray*}
&& E\sup_{t\in[0,T]}\Biggl|\sum_{j=1}^{+\infty}\int
_0^t\bigl(u_s^n-S'_s,
\tilde{h}^j_s\bigr)\,dB^j_s\Biggr|\\
&&\qquad
\leq c_1E\Biggl[\int_0^T\sum
_{j=1}^{+\infty}\bigl(u_s^n-S'_s,
\tilde{h}^j_s\bigr)^2\,ds\Biggr]^{1/2}
\\
&&\qquad\leq c_1E\Biggl[\int_0^T\sum
_{j=1}^{+\infty}\sup_{s\in[0,T]}\bigl\Vert
u_s^n-S'_s
\bigr\Vert^2\bigl\Vert \tilde{h}^j_s
\bigr\Vert^2\,ds\Biggr]^{1/2}
\\
&&\qquad\leq c_1E\biggl[\sup_{s\in[0,T]}\bigl\Vert u_s^n-S'_s
\bigr\Vert \biggl(\int_0^T\bigl\Vert |
\tilde{h}_s|\bigr\Vert^2\,ds\biggr)^{1/2}\biggr]
\\
&&\qquad \leq\varepsilon E\sup_{s\in[0,T]}\bigl\Vert u_s^n-S'_s
\bigr\Vert^2+\frac{c_1}{4\varepsilon}E\int_0^T
\bigl\Vert |\tilde {h}_s|\bigr\Vert^2\,ds.
\end{eqnarray*}
Then
using the strict ellipticity assumption and the inequalities above,
we get
\begin{eqnarray*}
&&\bigl(1-2\varepsilon(T+1)\bigr)E\sup_{t\in[0,T]}\bigl\Vert u_t^n-S'_t
\bigr\Vert^2+(2\lambda-\varepsilon)E\int_0^T
\mathcal {E}\bigl(u_s^n-S'_s
\bigr)\,ds\\
&&\quad{}+2E\int_0^Tn\bigl\Vert \bigl(
u_s^n-S_s\bigr)^-\bigr\Vert^2\,ds
\\
&&\qquad\leq C\biggl(E\Vert \xi\Vert^2+\frac{2}{\varepsilon}E\int
_0^T\Vert \tilde{f}_s
\Vert^2+\frac{2}{\varepsilon}\bigl\Vert |\tilde{g}_s|
\bigr\Vert^2+\biggl(\frac{c_1}{2\varepsilon}+1\biggr)\bigl\Vert |\tilde{h}_s|
\bigr\Vert^2\,ds\biggr).
\end{eqnarray*}
We take $\varepsilon$
small enough such that $(1-2\varepsilon(T+1))>0$; this yields
$(2\lambda-\varepsilon)>0,$
\begin{eqnarray*}
E\sup_{t\in[0,T]}\bigl\Vert u_t^n-S'_t
\bigr\Vert^2+E\int_0^T\mathcal{E}
\bigl(u_t^n-S'_t\bigr)\,dt+E\int
_0^Tn\bigl\Vert \bigl(u_t^n-S_t
\bigr)^-\bigr\Vert^2\,dt\leq C.
\end{eqnarray*}
Then with (\ref{estimobstacle}), we obtain the desired estimate.
\end{pf}
We now introduce $z$, the solution of the corresponding SPDE without obstacle:
\begin{eqnarray*}
dz_t+ Az_t \,dt=f_t\,dt+\sum
_{i=1}^d\partial_{i} g_{i,t}\,dt+
\sum_{j=1}^{+\infty}h_{j,t}\,dB_t^j,
\end{eqnarray*}
starting from $z_0 =\xi$, with null Dirichlet condition on the
boundary. As a consequence of Theorem \ref{mainquasicontinuity}, we can
take for $z$ a quasi-continuous version.

For each $n\in\mathbb{N}^*$, we put $v^n =u^n-z$. Clearly, $v^n$ satisfies
\[
dv_t^n+Av_t^n\,dt=n
\bigl(v_t^n-(S_t-z_t)\bigr)^-\,dt=n
\bigl(u^n_t -S_t \bigr)^- \,dt.
\]
Since $S-z$ is quasi-continuous almost-surely, by the results
established by Mignot and Puel in \cite{MignotPuel}, we know that
$P$-almost surely, the sequence $(v^n )_n$ is increasing and
converges in $L^2 ([0,T]\times\cO)$ $P$-almost surely to $v$ and
that the sequence of random measures $\nu^{v^n}=n(u^n_t -S_t )^-\,
dt\,dx$ converges vaguely to a measure associated to $v$: $\nu=\nu^v$.
As a consequence of the previous lemma, $(u^n)_n$ and $(v^n )_n$ are
bounded sequences in $L^2(\Omega\times[0,T];H_0^1(\mathcal{O})),$
which is a Hilbert space [equipped the norm $(E\int_0^T\Vert
u_t\Vert^2_{H_0^1(\mathcal{O})}\,dt)^{1/2}$]. By a double extraction
argument, we can construct subsequences $(u^{n_k})_k$ and
$(v^{n_k})_k$ such that the first one converges weakly in
$L^2(\Omega\times[0,T];H_0^1(\mathcal{O}))$ to an element that we
denote $u$ and the second one to an element which necessarily is equal
to $v$ since $(v^n )_n$ is increasing. Moreover, we can construct
sequences $(\hat{u}^n)_n$ and $(\hat{v}^n)_n$ of convex combinations of
elements
of the form
\[
\hat{u}^n=\sum_{k=1}^{N_n}
\alpha_k^nu^{n_k} \quad\mbox{and}\quad\hat
{v}^n=\sum_{k=1}^{N_n}
\alpha_k^n v^{n_k}
\]
converging strongly to $u$ an $v$, respectively, in
$L^2(\Omega\times[0,T];H_0^1(\mathcal{O}))$.

From the fact that $u^n$ is the weak solution of
(\ref{penalization}), we get
%
%e20 #&#
\begin{eqnarray}
\label{weaksol}&&\bigl(u^n_t,\varphi_t
\bigr)-(\xi,\varphi_0)-\int_0^t
\bigl(u^n_s,\partial_s\varphi_s
\bigr)\,ds+\int_0^t\mathcal{E}
\bigl(u^n_s,\varphi_s\bigr)\,ds\nonumber\\
&&\quad{}+\sum
_{i=1}^d\int_0^t
\bigl(g^i_s,\partial_i\varphi_s
\bigr)\,ds
\nonumber
\\[-8pt]
\\[-10pt]
\nonumber
&&\qquad =\int_0^t(f_s,
\varphi_s)\,ds+\sum_{j=1}^{+\infty}\int
_0^t\bigl(h^j_s,
\varphi_s\bigr)\,dB^j_s\\[-2pt]
&&\qquad\quad{}+\int
_0^t\int_{\mathcal{O}}
\varphi_s(x)n\bigl(u^n_s-S_s
\bigr)^-\,dx\,ds\qquad \mbox{a.s.}\nonumber
\end{eqnarray}
Hence,
%
%e21 #&#
\begin{eqnarray}
\label{weaksol2}&&\bigl(\hat{u}^n_t,
\varphi_t\bigr)-(\xi,\varphi_0)-\int_0^t
\bigl(\hat{u}^n_s,\partial_s
\varphi_s\bigr)\,ds+\int_0^t\mathcal
{E}\bigl(\hat {u}^n_s,\varphi_s\bigr)\,ds+
\sum_{i=1}^d\int_0^t
\bigl(g^i_s,\partial_i\varphi_s
\bigr)\,ds
\nonumber
\\[-2pt]
&&\qquad=\int_0^t(f_s,
\varphi_s)\,ds+\sum_{j=1}^{+\infty}\int
_0^t\bigl(h^j_s,
\varphi_s\bigr)\,dB^j_s\\[-2pt]
&&\qquad\quad{}+\int
_0^t\int_{\mathcal{O}}
\varphi_s(x) \Biggl( \sum_{k=1}^{N_n}
n_k\bigl(u^{n_k}_s-S_s\bigr)^-
\Biggr)\,dx\,ds\qquad \mbox{a.s.}\nonumber
\end{eqnarray}

We have
\begin{eqnarray*}
\int_0^t\int_{\mathcal{O}}
\varphi_s(x) \Biggl( \sum_{k=1}^{N_n}
n_k\bigl(u^{n_k}_s-S_s\bigr)^-
\Biggr)\,dx\,ds&=&\int_0^T-\biggl(\frac{\partial\varphi_t}{\partial
t},
\hat{v}^n_t\biggr)\,dt+\int_0^T
\mathcal{E}\bigl(\varphi_t,\hat{v}^n_t
\bigr)\,dt
\end{eqnarray*}
so that we have almost surely, at least for a subsequence,
\begin{eqnarray*}
&&\lim_{n\rightarrow+\infty}\int_0^t\int
_{\mathcal
{O}}\varphi_s(x) \Biggl( \sum
_{k=1}^{N_n} n_k\bigl(u^{n_k}_s-S_s
\bigr)^- \Biggr)\,dx\,ds\\[-2pt]
&&\qquad=\int_0^T-\biggl(
\frac{\partial\varphi_t}{\partial
t},{v}_t\biggr)\,dt+\int_0^T
\mathcal{E}(\varphi_t,{v}_t)\,dt
\\[-2pt]
&&\qquad=\int_0^T\int_{\cO
}
\varphi_t (x)\nu(dx, dt).
\end{eqnarray*}
As $(\hat{u}^n)_n$ converges to $u$
in $L^2(\Omega\times[0,T];H_0^1(\mathcal{O}))$, by making $n$ tend to
$+\infty$ in~\eqref{weaksol2}, we obtain
\begin{eqnarray*}
&&(u_t,\varphi_t)-(\xi,\varphi_0)-\int
_0^t(u_s,\partial_s
\varphi_s)\,ds+\int_0^t
\mathcal{E}(u_s,\varphi_s)\,ds+\sum
_{i=1}^d\int_0^t
\bigl(g^i_s,\partial_i\varphi_s
\bigr)\,ds
\\[-2pt]
&&\qquad=\int_0^t(f_s,
\varphi_s)\,ds+\sum_{j=1}^{+\infty}\int
_0^t\bigl(h^j_s,
\varphi_s\bigr)\,dB^j_s+\int
_0^t\int_{\mathcal{O}}
\varphi_s(x)\nu(dx,ds)\qquad \mbox{a.s.}
\end{eqnarray*}
In the next subsection we will show that $u$ satisfies an It\^o formula.
As a consequence by applying it to $u_t^2$, using standard arguments,
we get that $u\in\mathcal{H}_T$ so for almost all
$\omega\in\Omega$, $u(\omega)\in\mathcal{K}$. And from Theorem 9 in
\cite{DenisStoica}, we know that for almost all $\omega\in\Omega$,
$z(\omega)\in\mathcal{K}$. Therefore,\vadjust{\goodbreak} for almost all $\omega\in
\Omega$,
$v(\omega)=u(w)-z(w)\in\mathcal{K}$. Hence, $\nu=\partial_tv+Av$
is a regular
measure by definition. Moreover, by \cite{Pierre,PIERRER} we know
that $v$ admits a quasi-continuous version $\tilde{v}$ which
satisfies the minimality condition
%
%e22 #&#
\begin{equation}
\label{minimal}\int\int(\tilde{v}-S+\tilde{z})\nu(dx\,dt)=0.
\end{equation}
$z$ is quasi-continuous version, hence, $\tilde{u}=z+\tilde{v}$ is a
quasi-continuous version of $u$ and we can write \eqref{minimal} as
\[
\int\int(\tilde{u}-S)\nu(dx\,dt)=0.
\]
The fact that $u\geq S$ comes from the fact that $v\geq u-z$, so at
this stage we have proved that $(u,\nu)$ is a solution to the obstacle
problem we consider.

Uniqueness comes from the fact that both $z$ and $v$ are unique, which
ends the proof of Theorem \ref{maintheo}.

%s5.3 #&#
\subsection{It\^o's formula}
The following It\^o formula for the solution of the obstacle problem is
fundamental to get all the results in the nonlinear case.
Let us also remark that any solution of the nonlinear equation \eqref
{SPDE} may be viewed as the solution of a linear one so that it also
satisfies the It\^o formula.

%th5 #&#
\begin{theorem}\label{Itoformula}
Under assumptions of the previous Section \ref{subsec52}, let $u$
be the solution of SPDE (\ref{SPDE}) with obstacle and
$\Phi\dvtx \mathbb{R}^+\times\mathbb{R}\rightarrow\mathbb{R}$ be a
function of class $\mathcal{C}^{1,2}$. We denote by $\Phi'$ and
$\Phi''$ the derivatives of $\Phi$ with respect to the space
variables and by $\frac{\partial\Phi}{\partial t}$ the partial
derivative with respect to time. We assume that these derivatives
are bounded and $\Phi'(t,0)=0$ for all $t\geq0$. Then $P$-a.s. for
all $t\in[0,T]$,
\begin{eqnarray*}
&&\int_\mathcal{O}\Phi\bigl(t,u_t(x)\bigr)\,dx+\int
_0^t\mathcal{E}\bigl(\Phi'(s,u_s),u_s
\bigr)\,ds\\[-2pt]
&&\qquad=\int_\mathcal{O}\Phi\bigl(0,\xi(x)\bigr)\,dx+\int
_0^t\int_\mathcal {O}
\frac{\partial\Phi}{\partial
s}\bigl(s,u_s(x)\bigr)\,dx\,ds
\\[-2pt]
&&\qquad\quad{}+\int_0^t\bigl(\Phi'(s,u_s),f_s
\bigr)\,ds -\sum_{i=1}^d\int
_0^t\int_\mathcal{O}
\Phi''\bigl(s,u_s(x)\bigr)
\partial_iu_s(x)g_i(x)\,dx\,ds\\[-2pt]
&&\qquad\quad{}+\sum
_{j=1}^{+\infty}\int_0^t
\bigl(\Phi'(s,u_s),h_j\bigr)\,dB_s^j
\\[-2pt]
&&\qquad\quad{} +\frac{1}{2}\sum_{j=1}^{+\infty}\int
_0^t\int_\mathcal{O}
\Phi''\bigl(s,u_s(x)\bigr)
\bigl(h_{j,s}(x)\bigr)^2\,dx\,ds\\[-2pt]
&&\qquad\quad{}+\int_0^t
\int_\mathcal{O}\Phi'\bigl(s,\tilde
{u}_s(x)\bigr)\nu(dx\,ds).
\end{eqnarray*}
\end{theorem}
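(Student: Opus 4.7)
The strategy is to apply the classical It\^o formula to the penalized approximations $u^n$ of \eqref{penalization} and pass to the limit $n\to\infty$. For each fixed $n$, equation \eqref{penalization} is a standard quasilinear SPDE with the extra $L^2$-valued drift $n(u^n_t - S_t)^-\, dt$, so the It\^o formula recalled in \cite{DMS05} applies to $\Phi(t, u^n_t)$ and yields exactly the target identity except that the last term is replaced by the penalization term $\int_0^t \int_\cO \Phi'(s, u^n_s)\, n(u^n_s - S_s)^-\, dxds$. The ``smooth'' terms will then converge to their obstacle-problem counterparts by standard arguments: from subsection \ref{subsec5.2} we pass to convex combinations $\hat u^n$ converging strongly to $u$ in $L^2(\Omega\times [0,T]; H^1_0(\cO))$, and since $\Phi'$ and $\Phi''$ are bounded, dominated convergence together with the Burkholder--Davis--Gundy inequality handles each term involving $\Phi(t,u^n_t)$, $\Phi'(s,u^n_s)$ or $\Phi''(s,u^n_s)$ paired with the fixed integrands $f,g,h$ or with $\nabla u^n$.

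The main effort lies in identifying the limit of the penalization term as $\int_0^t \int_\cO \Phi'(s, \tilde u_s)\, \nu(dxds)$. Recall that $v^n = u^n - z\in \mathcal{P}$ with $\nu^{v^n} = n(u^n-S)^-\,dxdt$, that $\hat v^n\to v$ strongly in $L^2(0,T;H^1_0)$ with $\hat v^n$ bounded in $\mathcal{K}$ by Lemma \ref{lemmaestim}, and that $\nu = \nu^v$. I would split
$$ \int_0^t \!\!\int_\cO \Phi'(s,u^n_s)\, d\nu^{v^n} \;=\; J_n^{(1)} + J_n^{(2)}, $$
where $J_n^{(1)} = \int_0^t\int_\cO [\Phi'(s,u^n_s)-\Phi'(s,\tilde u_s)]\, d\nu^{v^n}$ and $J_n^{(2)} = \int_0^t\int_\cO \Phi'(s,\tilde u_s)\, d\nu^{v^n}$. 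For $J_n^{(2)}$, Theorem \ref{mainquasicontinuity} ensures $\tilde u$ is quasi-continuous; since $\Phi'(s,\cdot)$ is Lipschitz with $\Phi'(s,0)=0$, the map $\Phi'(s,\tilde u_s)$ is quasi-continuous and satisfies $|\Phi'(s,\tilde u_s)|\leq C|\tilde u_s|\leq C(|\tilde z_s|+|\tilde v_s|)$, which is $P$-a.s.\ dominated by an element of $\mathcal{P}$ (namely $\kappa(\tilde v,\tilde v^+(0))+\kappa(-\tilde v,\tilde v^-(0))$ for $\tilde v$, and the analogous combination for $\tilde z$ as in the proof of Theorem \ref{mainquasicontinuity}). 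Lemma \ref{convergemeas} applied $\omega$-wise then gives $J_n^{(2)}\to \int_0^t\int_\cO \Phi'(s,\tilde u_s)\,\nu(dxds)$.

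The delicate piece is $J_n^{(1)}$, where both the integrand and the measure vary with $n$. I would work with the convex combinations $\hat u^n$ so as to have strong $L^2(0,T;H^1_0)$ convergence; by Lemma \ref{cap} this forces $\mathrm{cap}(\{|\hat u^n-\tilde u|>\varepsilon\})\to 0$, so after a further extraction $\hat u^n\to\tilde u$ quasi-everywhere and hence $\nu^{\hat v^n}$-almost everywhere since regular measures do not charge polar sets. Combining the Lipschitz bound $|\Phi'(s,\hat u^n_s)-\Phi'(s,\tilde u_s)| \leq C|\hat u^n_s-\tilde u_s|$ with a uniform domination by a fixed element of $\mathcal{P}$ (constructed from $|\tilde u|$ and suprema of the $|\hat u^n|$'s as in Theorem \ref{mainquasicontinuity}) and the convergence $\int\psi\,d\nu^{\hat v^n}\to\int\psi\,d\nu^v$ from Lemma \ref{convergemeas} yields $J_n^{(1)}\to 0$. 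Reconciling this simultaneous convergence of integrands and measures in the parabolic capacity framework is the main obstacle; once it is settled, assembling the limits of every term produces the claimed It\^o identity.
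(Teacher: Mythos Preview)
Your overall framework is the same as the paper's: apply the It\^o formula from \cite{DMS05} to the (convex combinations of) penalized solutions $\hat u^n$ and pass to the limit, with all work concentrated in the penalization term. The difference, and the source of the gap you yourself flag, is the choice of splitting point for that term.

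You split around $\Phi'(s,\tilde u_s)$, which forces you to control
\[
J_n^{(1)}=\int_0^t\!\!\int_{\cO}\bigl[\Phi'(s,\hat u^n_s)-\Phi'(s,\tilde u_s)\bigr]\,\hat\nu^n(dxds),
\]
an integral where \emph{both} the integrand and the measure vary. Your proposed resolution (Lemma~\ref{cap} $\Rightarrow$ q.e.\ convergence of $\hat u^n$ to $\tilde u$, plus domination, plus Lemma~\ref{convergemeas}) does not close this: Lemma~\ref{convergemeas} is stated only for a \emph{fixed} quasi-continuous integrand, and a dominated-convergence argument for a sequence of measures would require uniform absolute continuity of $(\hat\nu^n)$ with respect to capacity, which you have not established. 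Moreover, the route to q.e.\ convergence via Lemma~\ref{cap} needs a parabolic potential $\geq 1$ on $\{|\hat u^n-\tilde u|>\varepsilon\}$ with small $\mathcal K$-norm; but $u-\hat u^n=v-\hat v^n$ is a difference of potentials and is not itself in $\mathcal P$, so Lemma~\ref{estimoftau} does not apply to it.

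The paper instead splits around $\Phi'(s,S_s)$, which decouples the two moving parts. Using $|\Phi'(s,\hat u^n_s)-\Phi'(s,S_s)|\le C\,|\hat u^n_s-S_s|=C\bigl[(\hat u^n_s-S_s)^+ +(\hat u^n_s-S_s)^-\bigr]$ and the monotonicity $\hat u^n\le u$, the positive part is bounded by the \emph{fixed} function $u-S$, so Lemma~\ref{convergemeas} applies directly and gives the limit $\int(\tilde u-S)\,d\nu=0$ by the Skorohod condition. The negative part $\int(\hat u^n-S)^-\,d\hat\nu^n$ is handled by a separate energy identity (Lemma~\ref{crucial} in the paper), which uses $\hat v^n=\hat u^n-z$, the formula $\tfrac12\|\hat v^n_T\|^2+\int_0^T\mathcal E(\hat v^n)\,ds=\int\hat v^n\,d\hat\nu^n$, and the strong convergence $\hat v^n\to v$ to identify the limit. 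The remaining piece $\int\Phi'(s,S_s)\,d\hat\nu^n$ again has a fixed integrand, and one concludes by noting $\int|\Phi'(s,\tilde u)-\Phi'(s,S)|\,d\nu\le C\int(\tilde u-S)\,d\nu=0$.

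In short, the missing idea is to exploit monotonicity and the Skorohod condition by centering at $S$ rather than at $\tilde u$; this is what makes the simultaneous limit tractable and is precisely the content of the paper's Lemma~\ref{crucial}.
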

\begin{pf} We keep the same notation as in the previous subsection
and so consider the sequence $(u^n )_n$ approximating $u$ and also
$(\hat{u}^n)_n$ the sequence of convex combinations $\hat{u}^n=\sum_{k=1}^{N_n}\alpha_k^nu^{n_k}$
converging strongly to $u$ in
$L^2(\Omega\times[0,T];H_0^1(\mathcal{O}))$.

Moreover, by standard arguments such as the Banach--Saks theorem, since
$(u^n)_n$ is nondecreasing, we can choose the convex combinations such that
$(\hat{u}^n )_n$ is also a nondecreasing sequence.
We start by a key lemma:
%
%le5 #&#
\begin{lemma}\label{crucial}Let $ t\in[0,T]$, then
\[
\lim_{n\rightarrow+\infty}E\int_0^t\int
_\mathcal{O}\bigl(\hat {u}^n_s-S_s
\bigr)^-\sum_{k=1}^{N_n}\alpha^n_k
n_k \bigl(u^{n_k}_s-S_s
\bigr)^-\,dx\,ds =0.
\]
\end{lemma}
\begin{pf}
We write as above $u^n =v^n +z$ and we denote $\hat{\nu}^n=\break \sum_{k=1}^{N_n}\alpha^n_kn_k(u^{n_k}_s-S_s)^-$ so that
\[
\int_0^t\int_\mathcal{O}
\bigl(\hat{u}^n_s-S_s\bigr)\hat{
\nu}^n(dx\,ds)=\int_0^t\int
_\mathcal{O}\hat{v}^n_s\hat{
\nu}^n(dx\,ds) + \int_0^t\int
_\mathcal {O}(z_s-S_s)\hat{
\nu}^n(dx\,ds).
\]
From Lemma \ref{convergemeas}, we know that
\[
\int_0^t\int_\mathcal{O}(z_s-S_s)
\hat{\nu}^n(dx\,ds)\rightarrow\int_0^t
\int_\mathcal{O}(z_s-S_s)\nu(dx\,ds).
\]
Moreover, by Lemma II.6 in \cite{Pierre}, we have for all $n$
\[
\frac{1}{2}\bigl\Vert \hat{v}^n_T\bigr\Vert^2+
\int_0^T\mathcal {E}\bigl(\hat
{v}^n_s\bigr)\,ds=\int_0^T
\int_\mathcal{O}\hat{v}^n_s\hat{
\nu}^n(dx\,ds)
\]
and
\[
\frac{1}{2}\Vert v_{T}\Vert^2+\int
_0^T\mathcal {E}(v_s)\,ds=\int
_0^T\int_\mathcal{O}
\tilde{v}_s\nu(dx\,ds).
\]
As $(\hat{v}^n )_n$ tends to $v$ in $L^2 ([0,T], H^1_0 (\cO))$,
\[
\lim_{n\rightarrow+\infty}\int_0^T\mathcal{E}\bigl(
\hat{v}^n_s\bigr)\,ds =\int_0^T
\mathcal{E}({v}_s)\,ds.
\]
Let us prove that $(\Vert \hat{v}^n_{T}\Vert )_n$ tends to
$\Vert  v_{T}\Vert $.

Since $(\hat{v}^n_{T})_n$ is nondecreasing and bounded in $L^2 (\cO)$,
it converges in $L^2 (\cO)$ to $ m=\sup_n \hat{v}^n_{T}$.
Let $\rho\in H^1_0 (\cO)$, then the map defined by $\varphi
(t,x)=\rho(x)$ belongs to~$\mathcal{W}$, hence, as a consequence of
Proposition \ref{approximation},
\[
\int_{[0,T[\times\cO}\rho \,d\hat{\nu}^n = \int
_0^T \mathcal {E}\bigl(\rho,
\hat{v}_s^n\bigr) \,ds +\bigl(\rho, \hat{v}^n_T
\bigr)
\]
and
\[
\int_{[0,T[\times\cO}\tilde{\rho} \,d{\nu} = \int_0^T
\mathcal {E}(\rho, {v_s}) \,ds +(\rho, {v}_T);
\]
making $n$ tend to $+\infty$ and using one more time Lemma \ref
{convergemeas}, we get
\[
\lim_{n\rightarrow+\infty}\bigl(\rho, \hat{v}^n_T\bigr)=(\rho,
m)=(\rho, {v}_T).
\]
Since $\rho$ is arbitrary, we have $v_T =m$ and so $\lim_{n\rightarrow
+\infty}\Vert \hat{v}^n_{T}\Vert =\Vert  v_{T}\Vert $ and
this yields
\[
\lim_{n\rightarrow+\infty}\int_0^T\int
_\mathcal{O}\hat {v}^n_s\hat{\nu
}^n(dx\,ds)=\int_0^T\int
_\mathcal{O}\tilde{v}_s\nu(dx\,ds)=\int
_0^T\int_\mathcal{O}(S_s
-z_s){\nu} (dx\,ds).
\]
This proves that
\[
\lim_{n\rightarrow+\infty}\int_0^t\int
_\mathcal{O}\bigl(\hat {u}^n_s-S_s
\bigr)\hat {\nu}^n(dx\,ds)=0.
\]
We conclude by remarking that
\begin{eqnarray*}
\lim_{n\rightarrow+\infty}\int_0^t\int
_\mathcal{O}\bigl(\hat {u}^n_s-S_s
\bigr)^+\hat{\nu}^n(dx\,ds)&\leq&\lim_{n\rightarrow+\infty}\int
_0^t\int_\mathcal{O}({u}_s-S_s)
\hat{\nu}^n(dx\,ds)\\
&=&\int_0^t\int
_\mathcal {O}(\tilde{u}_s-S_s){
\nu}(dx\,ds)=0.\hspace*{24pt}\qquad\qed
\end{eqnarray*}
\noqed\end{pf}
We now end the proof of Theorem \ref{Itoformula}.
We
consider the penalized solution $(u^n)_n$, and we know that its convex
combination $(\hat{u}^n)_n$ converges strongly to $u$ in
$L^2(\Omega\times[0,T];H_0^1(\mathcal{O}))$. And $\hat{u}^n$
satisfies the following SPDE:
\begin{eqnarray*}
d\hat{u}_t^n+A\hat{u}_t^n\,dt=f_t\,dt+
\sum_{i=1}^d\partial_ig^i_t\,dt+
\sum_{j=1}^{+\infty}h^j_t\,dB^j_t+
\sum_{k=1}^{N_n}\alpha^n_k
n_k \bigl(u^{n_k}_s-S_s
\bigr)^-\,dt.
\end{eqnarray*}
From the It\^o formula for the solution of SPDE without obstacle
(see Lemma 7 in~\cite{DMS05}), we have, almost surely, for all
$t\in[0,T]$,
\begin{eqnarray*}
&&\int_\mathcal{O}\Phi\bigl(t,\hat{u}_t^n(x)
\bigr)\,dx+\int_0^t\mathcal {E}\bigl(
\Phi'\bigl(s,\hat{u}_s^n\bigr),
\hat{u}_s^n\bigr)\,ds\\[-2pt]
&&\qquad=\int_\mathcal{O}
\Phi\bigl(0,\xi (x)\bigr)\,dx+\int_0^t\int
_\mathcal{O}\frac{\partial\Phi}{\partial
s}\bigl(s,\hat{u}_s^n
\bigr)\,dx\,ds
\\[-2pt]
&&\qquad\quad{}+\int_0^t\bigl(\Phi'\bigl(s,
\hat{u}_s^n\bigr),f_s\bigr)\,ds\\[-2pt]
&&\qquad\quad{} -\sum
_{i=1}^d\int_0^t
\int_\mathcal{O}\Phi''\bigl(s,\hat
{u}_s^n(x)\bigr)\partial_i
\hat{u}_s^n(x)g_i(x)\,dx\,ds\\[-2pt]
&&\qquad\quad{}+\sum
_{j=1}^{+\infty}\int_0^t
\bigl(\Phi'\bigl(s,\hat {u}_s^n
\bigr),h_j\bigr)\,dB_s^j
\\[-2pt]
&&\qquad\quad{} +\frac{1}{2}\sum_{j=1}^{+\infty}\int
_0^t\int_\mathcal{O}
\Phi''\bigl(s,\hat {u}_s^n(x)
\bigr) \bigl(h_j(x)\bigr)^2\,dx\,ds\\[-2pt]
&&\qquad\quad{}+\int_0^t
\int_\mathcal{O}\Phi'\bigl(s,\hat
{u}^n_s\bigr)\sum_{k=1}^{N_n}
\alpha^n_k n_k \bigl(u^{n_k}_s-S_s
\bigr)^-\,dx\,ds.
\end{eqnarray*}
Because of the strong convergence of $(\hat{u}^n)_n$, the convergence of
all the terms except the last one are clear. To obtain the
convergence of the last term, we do as follows:
\begin{eqnarray*}
&&\int_0^t\int_\mathcal{O}
\Phi'\bigl(s,\hat{u}^n_s\bigr)\sum
_{k=1}^{N_n}\alpha^n_k
n_k \bigl(u^{n_k}_s-S_s
\bigr)^-\,dx\,ds
\\[-2pt]
&&\qquad=\int_0^t\int_\mathcal
{O}\bigl(\Phi'\bigl(s,\hat{u}^n_s\bigr)-
\Phi'(s,S_s)\bigr)\sum_{k=1}^{N_n}
\alpha^n_k n_k \bigl(u^{n_k}_s-S_s
\bigr)^-\,dx\,ds
\\[-2pt]
&&\qquad\quad{}+\int_0^t\int_\mathcal{O}
\Phi'(s,S_s)\sum_{k=1}^{N_n}
\alpha^n_k n_k \bigl(u^{n_k}_s-S_s
\bigr)^-\,dx\,ds.
\end{eqnarray*}
For the first term in the right member, we have
\begin{eqnarray*}
&&\Biggl|\int_0^t\int_\mathcal{O}
\bigl(\Phi'\bigl(s,\hat {u}^n_s\bigr)-
\Phi'(s,S_s)\bigr)\sum_{k=1}^{N_n}
\alpha^n_k n_k \bigl(u^{n_k}_s-S_s
\bigr)^-\,dx\,ds\Biggr|
\\[-2pt]
&&\qquad\leq C\int_0^t\int_\mathcal{O}\bigl|
\hat{u}^n_s -S_s\bigr|\cdot\sum
_{k=1}^{N_n}\alpha^n_k
n_k \bigl(u^{n_k}_s-S_s
\bigr)^-\,dx\,ds
\\[-2pt]
&&\qquad=C\int_0^t\int_\mathcal{O}
\bigl(\bigl(\hat{u}^n_s-S_s\bigr)^++\bigl(
\hat{u}_s^n-S_s\bigr)^-\bigr)\sum
_{k=1}^{N_n}\alpha^n_k
n_k \bigl(u^{n_k}_s-S_s
\bigr)^-\,dx\,ds
\\[-2pt]
&&\qquad=C\int_0^t\int_\mathcal{O}
\bigl(\hat{u}^n_s-S_s\bigr)^+\sum
_{k=1}^{N_n}\alpha^n_k
n_k \bigl(u^{n_k}_s-S_s
\bigr)^-\,dx\,ds\\[-2pt]
&&\qquad\quad{}+C\int_0^t\int_\mathcal{O}
\bigl(\hat {u}^n_s-S_s\bigr)^-\sum
_{k=1}^{N_n}\alpha^n_k
n_k \bigl(u^{n_k}_s-S_s
\bigr)^-\,dx\,ds.
\end{eqnarray*}
We have the following inequality because $(\hat{u}^n)_n$
converges to $u$ increasingly:
\begin{eqnarray*}
&&\int_0^t\int_\mathcal{O}\bigl(
\hat{u}^n_s-S_s\bigr)^- \sum
_{k=1}^{N_n}\alpha^n_k
n_k \bigl(u^{n_k}_s-S_s
\bigr)^-\,dx\,ds\\
&&\qquad\leq\int_0^t\int
_\mathcal{O}(u_s-S_s)^+\sum
_{k=1}^{N_n}\alpha^n_k
n_k \bigl(u^{n_k}_s-S_s
\bigr)^-\,dx\,ds
\\
&&\qquad= \int_0^t\int_\mathcal{O}(u_s-S_s)
\sum_{k=1}^{N_n}\alpha^n_k
n_k \bigl(u^{n_k}_s-S_s
\bigr)^-\,dx\,ds.
\end{eqnarray*}
With Lemma \ref{convergemeas}, we know that
\[
\lim_{n\rightarrow\infty}\int_0^t\int
_\mathcal{O}(u_s-S_s)\sum
_{k=1}^{N_n}\alpha^n_k
n_k \bigl(u^{n_k}_s-S_s
\bigr)^-\,dx\,ds\rightarrow\int_0^t\int
_\mathcal{O}(\tilde{u}_s-\tilde{S}_s)
\nu(dx\,ds)=0.
\]
And from Lemma \ref{crucial}, we
have
\[
\int_0^t\int_\mathcal{O}
\bigl(\hat{u}^n_s-S_s\bigr)^-\sum
_{k=1}^{N_n}\alpha^n_k
n_k \bigl(u^{n_k}_s-S_s
\bigr)^-\,dx\,ds\rightarrow0.
\]
Therefore,
\begin{eqnarray*}
\int_0^t\int_\mathcal{O}\bigl(
\Phi'\bigl(s,\hat {u}_s^n\bigr)-
\Phi'(s,S_s)\bigr) \sum_{k=1}^{N_n}
\alpha^n_k n_k \bigl(u^{n_k}_s-S_s
\bigr)^-\,dx\,ds\rightarrow0.
\end{eqnarray*}
Moreover, with Lemma \ref{convergemeas}, we have
\begin{eqnarray*}
\int_0^t\int_\mathcal{O}
\Phi'(s,S_s)\sum_{k=1}^{N_n}
\alpha^n_k n_k \bigl(u^{n_k}_s-S_s
\bigr)^-\,dx\,ds\rightarrow\int_0^t\int
_\mathcal{O}\Phi'(s,S_s)\nu(dx\,ds)
\end{eqnarray*}
and
\begin{eqnarray*}
&&\biggl|\int_0^t\int_\mathcal{O}
\Phi'(s,u_s)\nu (dx\,ds)-\int_0^t
\int_\mathcal{O}\Phi'(s,S_s)
\nu(dx\,ds)\biggl|\\
&&\qquad\leq C\int_0^t\int
_\mathcal{O}|\tilde{u}_s-S_s|\nu(dx\,ds)
\\
&&\qquad=C\int_0^t\int_\mathcal{O}(
\tilde{u}_s-S_s)\nu(dx\,ds)=0.
\end{eqnarray*}
Therefore, taking the limit, we get the desired It\^o formula.
\end{pf}
%
%s5.4 #&#
\subsection{It\^o's formula for the difference of the solutions of
two OSPDEs} We still consider $(u,\nu)$ the solution of the linear
equation as in Section \ref{subsec52},
\[
\cases{ %
\displaystyle du_t+Au_t\,dt=f_t\,dt+
\sum_{i=1}^d\partial_ig^i_t\,dt+
\sum_{j=1}^{+\infty
}h^j_t\,dB^j_t
+\nu(dt,x),
\vspace*{2pt}\cr
u\geq S,}
\]
and consider another linear equation with adapted coefficients $\bar
{f}$, $\bar{g}$, $\bar{h}$, respectively, in $L^2 ([0,T]\times\Omega
\times
\cO;\mathbb{R})$, $L^2 ([0,T]\times\Omega\times\cO;\mathbb
{R}^d)$ and
$L^2 ([0,T]\times\Omega\times\cO;\mathbb{R}^{{\mathbb{N}^*}})$ and
obstacle $\bar{S}$ which satisfies the same hypotheses \ref{asso} as
$S$, that is, $\bar{S}_0 \leq\xi$ and~$\bar{S}$ is dominated by the
solution of an SPDE (not necessarily the same as $S$). We denote by
$(y, \bar{\nu})$ the unique solution to the associated SPDE with
obstacle with initial condition $y_0 =u_0 =\xi$:
\[
\cases{ %
 \displaystyle dy_t+Ay_t\,dt=
\bar{f}_t\,dt+\sum_{i=1}^d
\partial_i\bar{g}^i_t\,dt+\sum
_{j=1}^{+\infty}\bar{h}^j_t\,dB^j_t
+\bar{\nu}(dt,x),
\vspace*{2pt}\cr
y\geq \bar{S},}
\]

%th6 #&#
\begin{theorem}\label{Itodifference}Let $\Phi$ as in Theorem \ref
{Itoformula}, then the difference of the two solutions satisfy the
following It\^o
formula for all $t\in[0,T]$:
%
%e23 #&#
\begin{eqnarray}
\label{itodifference}
&&\int_\mathcal{O}\Phi
\bigl(t,u_t(x)-y_t(x)\bigr)\,dx+\int_0^t
\mathcal{E}\bigl(\Phi'(s,u_s-y_s),u_s-y_s
\bigr)\,ds\nonumber\\
&&\qquad=\int_0^t\bigl(\Phi'(s,u_s-y_s),f_s-
\bar{f}_s\bigr)\,ds\nonumber\\
&&\qquad\quad{} -\sum_{i=1}^d\int_0^t
\int_\mathcal{O}\Phi''(s,u_s-y_s)
\partial_i(u_s-y_s) \bigl(g^i_s-
\bar{g}^i_s\bigr)\,dx\,ds\\
&&\qquad\quad{}+\sum_{j=1}^{+\infty}
\int_0^t\bigl(\Phi'(s,u_s-y_s),h^j_s-
\bar{h}^j_s\bigr)\,dB^j_s
\nonumber\\
&&\qquad\quad{}+\frac
{1}{2}\sum_{j=1}^{+\infty}\int
_0^t\int_\mathcal{O}
\Phi''(s,u_s-y_s)
\bigl(h^j_s-\bar{h}^j_s
\bigr)^2\,dx\,ds\nonumber\\
&&\qquad\quad{}+\int_0^t\int
_\mathcal{O}\frac{\partial
\Phi
}{\partial
s}(s,u_s-y_s)\,dx\,ds
\nonumber
\nonumber\\
&&\qquad\quad{}+\int_0^t\int_\mathcal{O}
\Phi'(s,\tilde {u}_s-\tilde{y}_s) (\nu-
\bar{\nu}) (dx, ds)\qquad \mbox{a.s.}\nonumber
\end{eqnarray}
\end{theorem}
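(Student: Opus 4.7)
The strategy is to mimic the proof of Theorem \ref{Itoformula} applied to the difference of the two penalized approximations. Introduce the penalized solutions $u^n$ and $y^n$ of \eqref{penalization} associated respectively with the data $(\xi, f, g, h, S)$ and $(\xi, \bar{f}, \bar{g}, \bar{h}, \bar{S})$. By a joint Banach--Saks extraction, we can find a single sequence of convex-combination weights $(\alpha_k^n)_{k=1,\ldots,N_n}$ such that
\[
\hat{u}^n := \sum_{k=1}^{N_n} \alpha_k^n u^{n_k}, \qquad \hat{y}^n := \sum_{k=1}^{N_n} \alpha_k^n y^{n_k}
\]
converge strongly in $L^2(\Omega\times[0,T]; H^1_0(\mathcal{O}))$ to $u$ and $y$ respectively. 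Setting $\hat{\nu}^n := \sum_k \alpha_k^n n_k (u^{n_k}-S)^-\,dxdt$ and $\hat{\bar\nu}^n := \sum_k \alpha_k^n n_k (y^{n_k}-\bar S)^-\,dxdt$, the process $\hat u^n - \hat y^n$ solves a \emph{linear} SPDE without obstacle, with zero initial condition, forcing terms $f-\bar f$, $g-\bar g$, $h-\bar h$ and source $\hat\nu^n - \hat{\bar\nu}^n$.

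I would then apply the classical SPDE It\^o formula (Lemma 7 in \cite{DMS05}) to $\Phi(t,\hat u^n_t - \hat y^n_t)$ and pass to the limit. The deterministic integrals involving $f-\bar f$, $g-\bar g$, $h-\bar h$, together with the energy and quadratic-variation terms, converge to the corresponding entries of \eqref{itodifference} thanks to the strong convergence of $\hat u^n - \hat y^n$ in $L^2(\Omega\times[0,T];H^1_0)$, the boundedness of $\Phi'$ and $\Phi''$, and the Burkholder--Davis--Gundy inequality for the stochastic integral.

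The crux is passing to the limit in the measure terms, namely showing
\[
\int_0^t\!\int_{\mathcal{O}} \Phi'(s,\hat u^n-\hat y^n)\,(\hat\nu^n-\hat{\bar\nu}^n)(dx,ds) \longrightarrow \int_0^t\!\int_{\mathcal{O}} \Phi'(s,\tilde u_s-\tilde y_s)\,(\nu-\bar\nu)(dx,ds).
\]
I would treat the $\hat\nu^n$ piece by decomposing $\Phi'(s,\hat u^n-\hat y^n) = [\Phi'(s,\hat u^n-\hat y^n)-\Phi'(s,S-\hat y^n)] + \Phi'(s,S-\hat y^n)$. Lipschitz continuity of $\Phi'$ bounds the bracket by $C|\hat u^n-S| \le C[(\hat u^n-S)^+ + (\hat u^n-S)^-]$: the negative part integrated against $\hat\nu^n$ tends to $0$ by Lemma \ref{crucial}, while the positive part is dominated by $u-S$, so Lemma \ref{convergemeas} combined with the minimality condition $\int(\tilde u-S)\nu=0$ yields $\int(u-S)\hat\nu^n \to 0$. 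For the remaining integral $\int \Phi'(s,S-\hat y^n)\hat\nu^n$, I would extract a further subsequence along which $\hat y^n \to \tilde y$ quasi-everywhere (following the argument of Theorem \ref{mainquasicontinuity}), so that Lemma \ref{convergemeas} gives the limit $\int \Phi'(s,S-\tilde y)\nu$; since $\tilde u = S$ q.e.\ on $\mathrm{supp}\,\nu$, this equals $\int \Phi'(s,\tilde u-\tilde y)\nu$. The $\hat{\bar\nu}^n$ piece is handled symmetrically, swapping the roles of $(u,S,\nu)$ and $(y,\bar S,\bar\nu)$.

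The principal obstacle is precisely the upgrade from $L^2$ convergence of $\hat y^n$ to quasi-uniform convergence of a subsequence, which is needed to integrate the moving test function $\Phi'(s,S-\hat y^n)$ against the singular measure $\hat\nu^n$ via Lemma \ref{convergemeas}. This is obtained by applying Lemma \ref{cap} to $\kappa\bigl(\tfrac{1}{\varepsilon_n}(\hat y^{n+1}-\hat y^n),\tfrac{1}{\varepsilon_n}(\hat y^{n+1}_0 -\hat y^n_0)^+\bigr)$ and its negative counterpart, then extracting a fast subsequence and invoking Borel--Cantelli on the resulting capacity estimates, exactly as in the proof of Theorem \ref{mainquasicontinuity}. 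Combining all the pieces delivers \eqref{itodifference}.
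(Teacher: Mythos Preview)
Your overall architecture matches the paper: apply the SPDE It\^o formula to $\Phi(\hat u^n-\hat y^n)$ for convex combinations of the penalized solutions and pass to the limit. The difficulty is, as you correctly identify, the cross term $\int\Phi'(s,\hat u^n-\hat y^n)\,\hat\nu^n$ (and its $\hat{\bar\nu}^n$ counterpart), and here your plan has a genuine gap.

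Your proposal is to freeze the first argument of $\Phi'$ at $S$ and then upgrade $\hat y^n\to\tilde y$ to quasi-everywhere convergence by replaying the proof of Theorem~\ref{mainquasicontinuity} with $\kappa\bigl(\tfrac{1}{\varepsilon_n}(\hat y^{n+1}-\hat y^n),\cdot\bigr)$. This does not go through: the equation satisfied by $\hat y^{n+1}-\hat y^n$ carries the penalization forcing $\sum_k\alpha_k^{n+1}n_k(y^{n_k}-\bar S)^- -\sum_k\alpha_k^{n}n_k(y^{n_k}-\bar S)^-$, and Lemma~\ref{lemmaestim} only gives $E\int n\|(y^n-\bar S)^-\|^2\le C$, i.e.\ $n(y^n-\bar S)^-$ has $L^2$-norm of order $n^{1/2}$. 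Thus the ``$f^0$'' input to Lemma~\ref{estimoftau} is unbounded in $L^2$ and the capacity estimate blows up. Moreover, even granting q.e.\ convergence of $\hat y^n$, Lemma~\ref{convergemeas} is stated for a \emph{fixed} quasi-continuous integrand against a varying sequence of potentials; it does not immediately justify the double limit $\int\Phi'(s,S-\hat y^n)\,\hat\nu^n\to\int\Phi'(s,S-\tilde y)\,\nu$ where both integrand and measure move.

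The paper avoids both issues by proving a dedicated cross-term lemma (Lemma~\ref{crucial2}): $\int\hat y^n\,\nu^n\to\int\tilde y\,\nu$ and $\int\hat u^n\,\bar\nu^n\to\int\tilde u\,\bar\nu$. The idea is to write $\hat u^n=z+\hat v^n$, $\hat y^n=\bar z+\hat{\bar v}^n$ with $\hat v^n,\hat{\bar v}^n\in\mathcal P$, apply Pierre's energy identity $\tfrac12\|\hat v^n_t-\hat{\bar v}^n_t\|^2+\int_0^t\mathcal E(\hat v^n-\hat{\bar v}^n)=\int(\hat v^n-\hat{\bar v}^n)(\nu^n-\bar\nu^n)$, and use the already known limits for the diagonal terms $\int\hat v^n\,\nu^n$, $\int\hat{\bar v}^n\,\bar\nu^n$ (from Lemma~\ref{crucial}). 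This shows the \emph{sum} of the cross terms converges; monotonicity $\hat v^n\le v$, $\hat{\bar v}^n\le\bar v$ together with Lemma~\ref{convergemeas} (now with a fixed integrand $v$ or $\bar v$) gives the matching $\limsup$ bounds, forcing each cross term to converge separately. With Lemma~\ref{crucial2} in hand, the decomposition is simply
\[
\bigl|\Phi'(s,\hat u^n-\hat y^n)-\Phi'(s,u-y)\bigr|\le C(u-\hat u^n)+C(y-\hat y^n),
\]
and both pieces integrate to zero against $\nu^n$ by Lemmas~\ref{crucial} and~\ref{crucial2}; Lemma~\ref{convergemeas} applied to the fixed integrand $\Phi'(s,u-y)$ finishes the job. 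Replacing your q.e.-convergence step by this energy-identity argument closes the gap.
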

\begin{pf} We
begin with the penalized solutions. The corresponding penalization
equations are
\[
du^n_t+Au^n_t\,dt=f_t\,dt+
\sum_{i=1}^d\partial_i
g^i_t\,dt+\sum_{j=1}^{+\infty}h^j_t\,dB^j_t+n
\bigl(u^n_t-S_t\bigr)^-\,dt
\]
and
\[
dy^m_t+Ay^m_t\,dt=
\bar{f}_t\,dt+\sum_{i=1}^d
\partial_i\bar {g}^i_t\,dt+\sum
_{j=1}^{+\infty}\bar{h}^j_t\,dB^j_t+m
\bigl(y^m_t-\bar{S}_t\bigr)^-\,dt.
\]
From
the proofs above, we know that the penalized solution converges
weakly to the solution and we can take convex
combinations $\hat{u}^n=\sum_{i=1}^{N_n}\alpha_i^nu^{n_i}$
and $\hat{y}^n=\sum_{i=1}^{N'_n}\beta_i^n y^{n'_i}$ such that $(\hat
{u}^n )_n$ and
$(\hat{y}^n )_n$ are nondecreasing and converge strongly to $u$ and
$y$,
respectively, in $L^2 (\Omega\times[0,T], H^1_0 (\cO))$ as $n$ tends to
$+\infty$.

As in the proof of Theorem \ref{Itoformula}, we first establish a key lemma:
%
%le6 #&#
\begin{lemma}\label{crucial2} For all $t\in[0,T]$,
\[
\lim_{n\rightarrow+\infty}E\int_0^t\int
_\mathcal{O}\hat{u}^n_s \sum
_{k=1}^{N'_n}\beta^n_k
n'_k \bigl(y^{n'_k}_s-
\bar{S}_s\bigr)^-\,dx\,ds =E\int_0^t
\int_{\cO}\tilde{u}\bar{\nu} (ds,dx)
\]
and
\[
\lim_{n\rightarrow+\infty}E\int_0^t\int
_\mathcal{O}\hat{y}^n_s \sum
_{k=1}^{N_n}\alpha^n_k
n_k \bigl(u^{n_k}_s-S_s\bigr)^-\,
dx\,ds =E\int_0^t \int_{\cO
}
\tilde{y} {\nu} (ds,dx).
\]
\end{lemma}
\begin{pf} We put for all $n$,
\begin{eqnarray*}
\nu^n (ds,dx)&=& \sum_{k=1}^{N_n}
\alpha^n_k n_k \bigl(u^{n_k}_s-S_s
\bigr)^- \,dx\,ds\qquad\mbox{and}\\
\bar{\nu}^n (ds,dx)& =&\sum
_{k=1}^{N'_n}\beta^n_k
n'_k \bigl(y^{n'_k}_s-
\bar{S}_s\bigr)^-\,dx\,ds.
\end{eqnarray*}
As in the proof of Lemma \ref{crucial}, we write for all $n\in\bbN^*$:
$u^n =z+v^n$.

In the same spirit, we introduce $\bar{z}$ the solution of the linear SPDE:
\[
d\bar{z}_t +A\bar{z}_t =\bar{f}_t\,dt+\sum
_{i=1}^d\partial_i\bar
{g}^i_t\,dt+\sum_{j=1}^{+\infty}
\bar{h}^j_t\,dB^j_t,
\]
with initial condition $\bar{z}_0=\xi$ and put
$\forall n\in\mathbb{N}^*, \bar{v}^n =y^n -\bar{z}$, $\hat{\bar{v}}{}^n
=\hat{y}^n -\bar{z}$ and $\bar{v}=y-\bar{z}.$

As a consequence of Lemma II.6 in \cite{PIERRER}, we have for all
$n\in
\bbN^*$, $P$-almost surely,
\[
\frac{1}{2}\bigl\Vert \hat{v}^n_t-\hat{
\bar{v}}{}^n_t\bigr\Vert^2+\int_0^t
\mathcal{E}\bigl(\hat{v}^n_s -\hat{\bar{v}}{}^n_s
\bigr)\,ds=\int_0^t\int_\mathcal
{O}\bigl(\hat{v}^n_s - \hat{\bar{v}}{}^n_s
\bigr) \bigl({\nu}^n -{\bar{\nu}}^n \bigr) (dx,ds)
\]
and
\[
\frac{1}{2}\Vert {v}_t-{\bar{v}}_t
\Vert^2+\int_0^t\mathcal
{E}({v}_s -{\bar{v}}_s )\,ds=\int_0^t
\int_\mathcal{O}(\tilde{v}_s - \tilde {
\bar{v}}_s) ({\nu} -{\bar{\nu}} ) (dx,ds).
\]
But, as in the proof of Lemma \ref{crucial}, we get that $(\hat
{v}^n_t-\hat{\bar{v}}{}^n_t)_n$ tends to ${v}_t-{\bar{v}}_t$ in $L^2
(\cO
)$ almost surely and
\begin{eqnarray*}
\lim_n \int_0^t\int
_\mathcal{O}\hat{v}^n_s
\nu^n (dx,ds)&=& \int_0^t\int
_\mathcal{O}\tilde{v}_s \nu(dx,ds),
\\
\lim_n \int_0^t\int
_\mathcal{O}\hat{\bar{v}}{}^n_s \bar{
\nu}^n (dx,ds)&=& \int_0^t\int
_\mathcal{O}\tilde{\bar{v}}_s \bar{\nu}(dx,ds).
\end{eqnarray*}
This yields
\begin{eqnarray*}
&&\lim_n \biggl( \int_0^t\int
_\mathcal{O}\hat{v}^n_s \bar{
\nu}^n (dx,ds) +\int_0^t\int
_\mathcal{O}\hat{\bar{v}}{}^n_s
\nu^n (dx,ds) \biggr)\\
&&\qquad =\int_0^t\int
_\mathcal{O}\tilde{v}_s \bar{\nu}(dx,ds)+\int
_0^t\int_\mathcal {O}\tilde{
\bar{v}}_s \nu(dx,ds).
\end{eqnarray*}
But, we have
\begin{eqnarray*}
\limsup_n \int_0^t\int
_\mathcal{O}\hat{v}^n_s \bar{
\nu}^n (dx,ds)&\leq& \limsup_n \int_0^t
\int_\mathcal{O}v_s \bar{\nu}^n
(dx,ds)\\
&=&\int_0^t\int_\mathcal{O}
\tilde{v}_s \bar{\nu} (dx,ds),
\end{eqnarray*}
and in the same way
\[
\limsup_n \int_0^t\int
_\mathcal{O}\hat{\bar{v}}{}^n_s
\nu^n (dx,ds)\leq \int_0^t\int
_\mathcal{O}\tilde{\bar{v}}_s \nu(dx,ds).
\]
Let us remark that these inequalities also hold for any subsequence.
From this, it is easy to deduce that necessarily
\[
\lim_n \int_0^t\int
_\mathcal{O}\hat{v}^n_s \bar{
\nu}^n (dx,ds)=\int_0^t\int
_\mathcal{O}\tilde{v}_s \bar{\nu} (dx,ds)
\]
and
\[
\lim_n \int_0^t\int
_\mathcal{O}\hat{\bar{v}}{}^n_s
\nu^n (dx,ds)= \int_0^t\int
_\mathcal{O}\tilde{\bar{v}}_s \nu(dx,ds).
\]
We end the proof of this lemma by using similar arguments as in the
proof of Lemma \ref{crucial}.
\end{pf}
We now end the proof of Theorem \ref{Itodifference}.
We begin
with the equation which $\hat{u}^n-\hat{y}^n$ satisfies
\begin{eqnarray*}
&&d\bigl(\hat{u}^n_t-\hat{y}_t^n
\bigr)+A\bigl(\hat{u}^n_t-\hat {y}_t^n
\bigr)\,dt\\
&&\qquad=(f_t-\bar{f}_t)\,dt+\sum
_{i=1}^d\partial_i\bigl(g^i_t-
\bar {g}^i_t\bigr)\,dt+\sum_{j=1}^{+\infty}
\bigl(h^j_t-\bar{h}^j_t
\bigr)\,dB^j_t
+\bigl(\nu^n -\bar{\nu}^n \bigr) (x,dt ).
\end{eqnarray*}
Applying It\^o's formula to $\Phi(\hat{u}^n-\hat{y}^n)$, we have
\begin{eqnarray*}
&&\int_\mathcal{O}\Phi\bigl(t,\hat{u}_t^n(x)-
\hat {y}_t^n(x)\bigr)\,dx+\int_0^t
\mathcal{E}\bigl(\Phi'\bigl(s,\hat{u}_s^n-
\hat {y}_s^n\bigr),\hat {u}_s^n-
\hat{y}_s^n\bigr)\,ds\\
&&\qquad=\int_0^t
\bigl(\Phi'\bigl(s,\hat{u}_s^n-\hat
{y}_s^n\bigr),f_s-\bar{f}_s
\bigr)\,ds
\\
&&\qquad\quad{}-\sum_{i=1}^d\int_0^t
\int_\mathcal {O}\Phi''\bigl(s,
\hat{u}_s^n-\hat{y}_s^n\bigr)
\partial_i\bigl(\hat{u}_s^n-
\hat{y}_s^n\bigr) \bigl(g^i_s-
\bar{g}^i_s\bigr)\,dx\,ds\\
&&\qquad\quad{}+\sum_{j=1}^{+\infty}
\int_0^t\bigl(\Phi'\bigl(s,
\hat{u}_s^n-\hat {y}_s^n
\bigr),h^j_s-\bar {h}^j_s
\bigr)\,dB^j_s
\\
&&\qquad\quad{}+\frac{1}{2}\sum_{j=1}^{+\infty}\int
_0^t\int_\mathcal {O}
\Phi''\bigl(s,\hat{u}_s^n-
\hat{y}_s^n\bigr) \bigl(h_s^j-
\bar{h}^j_s\bigr)^2\,dx\,ds\\
&&\qquad\quad{}+\int
_0^t\int_\mathcal{O}
\frac{\partial
\Phi
}{\partial
s}\bigl(s,\hat{u}_s^n-
\hat{y}_s^n\bigr)\,dx\,ds
\\
&&\qquad\quad{}+\int_0^t\int_\mathcal{O}
\Phi'\bigl(s,\hat{u}_s^n-
\hat{y}_s^n\bigr) \bigl(\nu^n -\bar{
\nu}^n \bigr) (dx,dt )\qquad \mbox{a.s.}
\end{eqnarray*}
Because $(\hat{u}^n)_n$ and
$(\hat{y}^n)_n$ converge strongly to $u$ and $y$, respectively, the
convergence of all the terms except the last term are clear. For the
convergence of the last term, we do as follows:
\begin{eqnarray*}
&&\biggl\llvert \int_0^t\int
_\mathcal{O}\bigl[\Phi'\bigl(s,\hat
{u}_s^n-\hat{y}_s^n\bigr)-
\Phi'\bigl(s,u_s-\hat{y}_s^n
\bigr)\bigr]\nu^n(dx\,ds)\\
&&\quad{}+\int_0^t
\int_\mathcal{O}\bigl[\Phi'\bigl(s,u_s-
\hat{y}_s^n\bigr)-\Phi'(s,u_s-{y}_s)
\bigr]\nu^n(dx\,ds)\biggr\rrvert
\\
&&\qquad\leq C\int_0^t\int_\cO\bigl|
\hat{u}_s^n-u_s\bigr|\nu^n(dx\,ds)+
\int_0^t\int_\cO\bigl |\hat
{y}_s^n -y_s\bigr | \nu^n (dx,ds).
\end{eqnarray*}
As a consequence of Lemma \ref{crucial} and using the fact that $\hat
{u}^n \leq u$,
\[
\lim_n \int_0^t\int
_\cO\bigl|\hat{u}_s^n-u_s\bigr|
\nu^n(dx\,ds)=\lim_n \int_0^t
\int_\cO\bigl( u_s - \hat{u}_s^n
\bigr)\nu^n(dx\,ds)=0.
\]
By Lemma \ref{crucial2} and the fact that $\hat{y}^n \leq y$,
\[
\lim_n \int_0^t\int
_\cO\bigl|\hat{y}_s^n -y_s
\bigr| \nu^n (dx,ds)=\lim_n \int_0^t
\int_\cO\bigl(y_s -\hat{y}_s^n
\bigr) \nu^n (dx,ds)=0.
\]
This yields
\[
\lim_n \int_0^t\int
_\mathcal{O}\bigl(\Phi'\bigl(s,\hat{u}_s^n-
\hat {y}_s^n\bigr)- \Phi'
(s,u_s -y_s )\bigr)\nu^n (dx,dt )=0,
\]
but, by Lemma \ref{convergemeas}, we know that
\[
\lim_n \int_0^t\int
_\mathcal{O} \Phi' (s,u_s
-y_s )\nu^n (dx,dt )=\int_0^t
\int_\mathcal{O}\Phi'(s,\tilde{u}_s-
\tilde{y}_s)\bar{\nu }(dx,dt ),
\]
so
\[
\lim_n \int_0^t\int
_\mathcal{O}\Phi'\bigl(s,\hat{u}_s^n-
\hat {y}_s^n\bigr)\nu^n (dx,dt )= \int
_0^t\int_\mathcal{O}
\Phi'(s,\tilde{u}_s-\tilde {y}_s)\nu
(dx,dt ).
\]
In the same way, we prove
\[
\lim_n \int_0^t\int
_\mathcal{O}\Phi'\bigl(s,\hat{u}_s^n-
\hat {y}_s^n\bigr)\bar{\nu }^n (dx,dt )=
\int_0^t\int_\mathcal{O}
\Phi'(s,\tilde{u}_s-\tilde {y}_s)\bar{
\nu}(dx,dt ).
\]
The proof is now complete.
\end{pf}

%s5.5 #&#
\subsection{\texorpdfstring{Proof of Theorem \protect\ref{maintheo} in the nonlinear case}
{Proof of Theorem 4 in the nonlinear case}}
Let $\gamma$ and
$\delta$ be 2 positive constants. On
$L^2(\Omega\times[0,T];H^1_0(\mathcal{O}))$, we introduce the
norm
\[
\Vert u\Vert_{\gamma,\delta}=E\biggl(\int_0^Te^{-\gamma
s}
\bigl(\delta\Vert u_s\Vert^2+\Vert \nabla
u_s\Vert^2\bigr)\,ds\biggr),
\]
which clearly defines an equivalent norm on $L^2 (\Omega\times[0,T];
H^1_0 (\cO))$.

Let us consider the Picard sequence $(u^n)_n$
defined by $u^0=\xi$ and for all $n\in\mathbb{N}^*$ we denote by
$(u^{n+1},\nu^{n+1})$ the solution of the linear SPDE with obstacle
\[
\bigl(u^{n+1}, \nu^{n+1}\bigr)=\mathcal{R} \bigl(\xi, f
\bigl(u^n,\nabla u^n\bigr), g\bigl(u^n,
\nabla u^n\bigr), h\bigl(u^n,\nabla u^n
\bigr), S\bigr).
\]
Then, by It\^o's formula (\ref{itodifference}), we have almost
surely
\begin{eqnarray*}
&&e^{-\gamma T}\bigl\Vert u_T^{n+1}-u_T^n
\bigr\Vert^2+2\int_0^Te^{-\gamma
s}
\mathcal{E}\bigl(u_s^{n+1}-u_s^n
\bigr)\,ds\\
&&\qquad =-\gamma\int_0^Te^{-\gamma
s}\bigl\Vert
u_s^{n+1}-u_s^n
\bigr\Vert^2\,ds
\\
&&\qquad\quad{}+2\int_0^Te^{-\gamma
s}\bigl(
\hat{f}_s,u_s^{n+1}-u_s^n
\bigr)\,ds-2\sum_{i=1}^d\int
_0^Te^{-\gamma
s}\bigl(\hat{g}^i_s,
\partial_i\bigl(u_s^{n+1}-u_s^n
\bigr)\bigr)\,ds\\
&&\qquad\quad{}+2\sum_{j=1}^{+\infty
}\int
_0^Te^{-\gamma
s}\bigl(\hat{h}^j_s,u_s^{n+1}-u_s^n
\bigr)\,dB^j_s
\\
&&\qquad\quad{}+\int_0^Te^{-\gamma
s}\bigl\Vert |
\hat{h}_s|\bigr\Vert^2\,ds+2\int_0^T
\int_\mathcal {O}e^{-\gamma
s}\bigl(u_s^{n+1}-u_s^n
\bigr) \bigl(\nu^{n+1}-\nu^n\bigr) (dx\,ds),
\end{eqnarray*}
where
$\hat{f}=f(u^n,\nabla u^n)-f(u^{n-1},\nabla u^{n-1})$,
$\hat{g}=g(u^n,\nabla u^n)-g(u^{n-1},\nabla u^{n-1})$ and
$\hat{h}=h(u^n,\nabla u^n)-h(u^{n-1},\nabla u^{n-1})$. Clearly, the
last term is nonpositive, so using Cauchy--Schwarz's
inequality and the Lipschitz conditions on $f$, $g$ and $h$, we have
\begin{eqnarray*}
&&2\int_0^Te^{-\gamma s}\bigl(u_s^{n+1}-u_s^n,
\hat {f}_s\bigr)\,ds\\[-2pt]
&&\qquad\leq\frac{1}{\varepsilon}\int_0^Te^{-\gamma s}
\bigl\Vert u_s^{n+1}-u_s^n
\bigr\Vert^2\,ds+\varepsilon\int_0^T\Vert
\hat {f}_s\Vert^2\,ds
\\[-2pt]
&&\qquad\leq\frac{1}{\varepsilon}\int_0^Te^{-\gamma s}
\bigl\Vert u_s^{n+1}-u_s^n
\bigr\Vert^2\,ds+C\varepsilon\int_0^Te^{-\gamma s}
\bigl\Vert u_s^n-u_s^{n-1}
\bigr\Vert^2\,ds
\\[-2pt]
&&\qquad\quad{}+C\varepsilon\int_0^Te^{-\gamma s} \bigl\Vert
\nabla\bigl(u_s^n-u_s^{n-1}\bigr)
\bigr\Vert^2\,ds
\end{eqnarray*}
and
\begin{eqnarray*}
&&2\sum_{i=1}^d\int_0^Te^{-\gamma
s}
\bigl(\hat{g}^i_s,\partial_i
\bigl(u_s^{n+1}-u_s^n\bigr)
\bigr)\,ds\\[-2pt]
&&\qquad\leq2\int_0^Te^{-\gamma
s}\bigl\Vert
\nabla\bigl(u_s^{n+1}-u_s^n\bigr)\bigr\Vert
\bigl(C\bigl\Vert u_s^n-u_s^{n-1}\bigr\Vert
+\alpha\bigl\Vert \nabla \bigl(u_s^n-u_s^{n-1}
\bigr)\bigr\Vert \bigr)\,ds\\[-2pt]
&&\qquad\leq C\varepsilon\int_0^Te^{-\gamma
s}
\bigl\Vert \nabla\bigl(u_s^{n+1}-u_s^n
\bigr)\bigr\Vert^2\,ds+\frac{C}{\varepsilon
}\int_0^Te^{-\gamma
s}
\bigl\Vert u_s^n-u_s^{n-1}
\bigr\Vert^2\,ds
\\[-2pt]
&&\qquad\quad{}+\alpha\int_0^Te^{-\gamma
s}\bigl\Vert \nabla
\bigl(u_s^{n+1}-u_s^n\bigr)
\bigr\Vert^2\,ds+\alpha\int_0^Te^{-\gamma
s}
\bigl\Vert u_s^n-u_s^{n-1}
\bigr\Vert^2\,ds
\end{eqnarray*}
and
\begin{eqnarray*}
&&\int_0^Te^{-\gamma s}\bigl\Vert |
\hat{h}_s|\bigr\Vert^2\,ds\\[-2pt]
&&\qquad\leq C\biggl(1+\frac{1}{\varepsilon}
\biggr)\int_0^Te^{-\gamma s}\bigl\Vert
u_s^n-u_s^{n-1}
\bigr\Vert^2\,ds\\[-2pt]
&&\qquad\quad{} +\beta^2(1+\varepsilon)\int_0^Te^{-\gamma
s}
\bigl\Vert \nabla\bigl(u_s^n-u_s^{n-1}
\bigr)\bigr\Vert^2\,ds,
\end{eqnarray*}
where
$C$, $\alpha$ and $\beta$ are the constants in the Lipschitz
conditions. Using the elliptic condition and taking expectation, we
get
\begin{eqnarray*}
&&\biggl(\gamma-\frac{1}{\varepsilon}\biggr)E\int_0^Te^{-\gamma
s}
\bigl\Vert u_s^{n+1}-u_s^n
\bigr\Vert^2\,ds+(2\lambda-\alpha)E\int_0^Te^{-\gamma
s}
\bigl\Vert \nabla\bigl(u_s^{n+1}-u_s^n
\bigr)\bigr\Vert^2\,ds
\\[-2pt]
&&\qquad\leq C\biggl(1+\varepsilon+\frac{2}{\varepsilon}\biggr)\int_0^Te^{-\gamma s}
\bigl\Vert u_s^n-u_s^{n-1}
\bigr\Vert^2\,ds\\[-2pt]
&&\qquad\quad{}+\bigl(C\varepsilon+\alpha+\beta^2(1+\varepsilon
)\bigr)E\int_0^Te^{-\gamma
s}\bigl\Vert \nabla
\bigl(u_s^n-u_s^{n-1}\bigr)
\bigr\Vert^2\,ds.
\end{eqnarray*}
We
choose $\varepsilon$ small enough and then $\gamma$ such that
\[
C\varepsilon+\alpha+\beta^2(1+\varepsilon)<2\lambda-\alpha \quad\mbox{and}\quad
\frac
{\gamma
-1/\varepsilon}{2\lambda-\alpha}=\frac{C(1+\varepsilon+2/\varepsilon
)}{C\varepsilon
+\alpha+\beta^2(1+\varepsilon)}.
\]
If we set $\delta=\frac{\gamma-1/\varepsilon}{2\lambda-\alpha}$, we
have the following inequality:
\begin{eqnarray*}
\bigl\Vert u^{n+1}-u^n\bigr\Vert_{\gamma,\delta}&\leq&
\frac{C\varepsilon+\alpha
+\beta^2(1+\varepsilon)}{2\lambda-\alpha}\bigl\Vert u^n-u^{n-1}\bigr\Vert_{\gamma,\delta}\leq
\cdots \\
&\leq&\biggl(\frac{C\varepsilon
+\alpha
+\beta^2(1+\varepsilon)}{2\lambda-\alpha}\biggr)^n\bigl\Vert u^1
\bigr\Vert_{\gamma,\delta}
\end{eqnarray*}
when $n\rightarrow\infty$,
$(\frac{C\varepsilon+\alpha+\beta^2(1+\varepsilon)}{2\lambda-\alpha
})^n\rightarrow0$,
and we deduce that $(u^n)_n$ converges strongly to $u$ in
$L^2(\Omega\times[0,T];H_0^1(\mathcal{O}))$.

Moreover, as
$(u^{n+1},\nu^{n+1})=\mathcal{R} (\xi, f(u^n,\nabla u^n),
g(u^n,\nabla
u^n), h(u^n,\nabla u^n), S)$, we have for any $\varphi\in\mathcal{D}$,
\begin{eqnarray*}
\label{weaksol}&&\bigl(u^{n+1}_t,\varphi_t
\bigr)-(\xi,\varphi_0)-\int_0^t
\bigl(u^n_s,\partial_s\varphi_s
\bigr)\,ds+\int_0^t\mathcal {E}
\bigl(u^{n+1}_s,\varphi_s\bigr)\,ds\\
&&\quad{}+\sum
_{i=1}^d\int_0^t
\bigl(g^i_s\bigl(u_s^n,\nabla
u_s^n\bigr),\partial_i\varphi_s
\bigr)\,ds
\\
&&\qquad=\int_0^t\bigl(f_s
\bigl(u_s^n,\nabla u_s^n
\bigr),\varphi_s\bigr)\,ds+\sum_{j=1}^{+\infty
}
\int_0^t\bigl(h^j_s
\bigl(u_s^n,\nabla u_s^n
\bigr),\varphi_s\bigr)\,dB^j_s\\
&&\qquad\quad{}+\int
_0^t\int_{\mathcal{O}}
\varphi_s(x)\nu^{n+1}(dx\,ds)\quad \mbox{a.s.}
\end{eqnarray*}
Let $v^{n+1}$ be the random parabolic potential associated to $\nu^{n+1} $:
\[
\nu^{n+1}=\partial_t v^{n+1}+Av^{n+1}.
\]
We denote
$z^{n+1}=u^{n+1}-v^{n+1}$, so
\[
z^{n+1} =\mathcal{U}\bigl(\xi,f\bigl(u^n,\nabla
u^n\bigr), g\bigl(u^n,\nabla u^n\bigr), h
\bigl(u^n,\nabla u^n\bigr)\bigr)
\]
converges strongly to $z$
in $L^2(\Omega\times[0,T];H_0^1(\mathcal{O}))$. As a consequence of
the strong
convergence of $(u^{n+1})_n$, we deduce that $(v^{n+1})_n$ converges
strongly to $v$ in $L^2(\Omega\times[0,T];H_0^1(\mathcal{O}))$.
Therefore, for fixed $\omega$,
\begin{eqnarray*}
&&\int_0^t\biggl(-\frac{\partial_s\varphi_s}{\partial s},v_s
\biggr)\,ds+\int_0^t\mathcal {E}(
\varphi_s,v_s)\,ds\\
&&\qquad=\lim\int_0^t
\biggl(-\frac{\partial_s\varphi_s}{\partial
s},v^{n+1}_s\biggr)\,ds+\int
_0^t\mathcal{E}\bigl(\varphi_s,v^{n+1}_s
\bigr)\,ds\geq0,
\end{eqnarray*}
that is, $v(\omega)\in\mathcal{P}$. Then from Proposition \ref
{presentation},
we obtain a regular measure associated with $v$, and $(\nu^{n+1})_n$
converges vaguely to $\nu$.

Taking the limit, we obtain
\begin{eqnarray*}
&&(u_t,\varphi_t)-(\xi,\varphi_0)-\int
_0^t(u_s,\partial_s
\varphi_s)\,ds+\int_0^t
\mathcal{E}(u_s,\varphi_s)\,ds\\
&&\quad{}+\sum
_{i=1}^d\int_0^t
\bigl(g^i_s(u_s,\nabla u_s),
\partial_i\varphi_s\bigr)\,ds
\\
&&\qquad=\int_0^t\bigl(f_s(u_s,
\nabla u_s),\varphi_s\bigr)\,ds+\sum
_{j=1}^{+\infty
}\int_0^t
\bigl(h^j_s(u_s,\nabla u_s),
\varphi_s\bigr)\,dB^j_s\\
&&\qquad\quad{}+\int
_0^t\int_{\mathcal
{O}}
\varphi_s(x)\nu(dx,ds)\qquad \mbox{a.s.}
\end{eqnarray*}
From the fact that $u$ and $z$ are in $\mathcal{H}_T$, we know that
$v$ is also in $\mathcal{H}_T$, and by definition, $\nu$ is a random
regular measure. $ \hfill\Box$
%s6 #&#
\section{Comparison theorem}
%s6.1 #&#
\subsection{A comparison theorem in the linear case}
We first establish a comparison theorem for the solutions of
linear SPDE with obstacle in the case where the obstacles are the same;
this also
gives a comparison between the regular measures.

So, for this part only, we consider the same hypotheses as in
Section \ref{subsec52}. So we consider adapted processes $f$, $g$,
$h$, respectively, in $L^2 ([0,T]\times\Omega\times\cO;\mathbb{R})$, $L^2
([0,T]\times\Omega\times\cO;\mathbb{R}^d)$ and $L^2 ([0,T]\times
\Omega
\times\cO;\mathbb{R}^{\mathbb{N}^*})$, an obstacle $S$ which satisfies
assumption \ref{asso} and $\xi\in L^2(\Omega\times\mathcal{O})$ is an
$\mathcal{F}_0$-measurable random
variable such that $\xi\leq S_0$. We denote by $(u,\nu)$ the solution
of $\mathcal{R}(\xi, f, g, h, S)$.

We are given another $\xi'\in L^2(\Omega\times\mathcal{O})$ is
$\mathcal{F}_0$-measurable and such that $\xi'\leq S_0$ and another
adapted process $f'$ in $L^2 ([0,T]\times\Omega\times\cO;\mathbb{R})$.
We denote by $(u',\nu')$ the solution of $\mathcal{R}(\xi', f', g, h, S)$. We have the following comparison theorem:
%
%th7 #&#
\begin{theorem}
Assume that the following conditions hold:
\begin{longlist}[(1)]
\item[(1)]$\xi\leq\xi', dx\otimes dP$-a.e.
\item[(2)]$f\leq f', dt\otimes dx\otimes
dP$-a.e.
\end{longlist}
Then for almost all $\omega\in\Omega$, $u\leq u', q.e.$
and $\nu\geq\nu'$ in the sense of distribution.
\end{theorem}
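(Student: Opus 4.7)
The plan is to transfer the comparison from the penalized approximations to the limit. Let $(u^n)$ denote the penalized sequence associated with $(\xi,f,g,h,S)$ introduced in Subsection \ref{subsec5.2},
\begin{eqnarray*}
du_t^n + Au_t^n\, dt = f_t\, dt + \sum_{i=1}^d \partial_i g_{i,t}\, dt + \sum_{j=1}^{+\infty} h_{j,t}\, dB_t^j + n(u_t^n-S_t)^-\, dt,\quad u_0^n=\xi,
\end{eqnarray*}
with null Dirichlet boundary condition, and let $(u'^n)$ be defined similarly with $\xi'$ and $f'$ replacing $\xi$ and $f$. The associated penalization measures $\nu^n:=n(u^n-S)^-\, dx\, dt$ and $\nu'^n:=n(u'^n-S)^-\, dx\, dt$ converge vaguely, along the convex combinations constructed in Subsection \ref{subsec5.2}, to $\nu$ and $\nu'$ respectively. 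First I would establish the penalized comparison $u^n\leq u'^n$. Setting $w^n:=u^n-u'^n$, $w^n$ solves a linear SPDE driven by $(f-f')\, dt\leq 0$, with identical $g,h$ coefficients, initial value $\xi-\xi'\leq 0$, and the additional term $n\bigl[(u^n-S)^- - (u'^n-S)^-\bigr]\, dt$. Applying the standard It\^o formula (Lemma 7 in \cite{DMS05}) to $\Phi(w^n)$, where $\Phi\in C^{1,2}(\R)$ is a smooth non-negative approximation of $x\mapsto (x^+)^2$ with $\Phi'\geq 0$, $\Phi'\equiv 0$ on $(-\infty,0]$, $\Phi''\geq 0$ and bounded derivatives, one checks that the $g,h$ contributions vanish (identical coefficients), the source contribution $\int\Phi'(w^n)(f-f')$ is non-positive, the initial term $\int\Phi(\xi-\xi')\, dx$ is zero, and the penalization contribution
\begin{eqnarray*}
\int_0^t\!\!\int_\cO \Phi'(w_s^n)\, n\bigl[(u_s^n-S_s)^- - (u_s'^n-S_s)^-\bigr]\, dx\, ds
\end{eqnarray*}
is non-positive because $\Phi'(w^n)$ is supported on $\{u^n>u'^n\}$ where $(u^n-S)^-\leq (u'^n-S)^-$ by monotonicity of $x\mapsto x^-$. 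Taking expectation, one obtains $E\int_\cO\Phi(w_t^n)\, dx\leq 0$ for all $t$, and letting $\Phi\uparrow (x^+)^2$ yields $w^n\leq 0$, i.e.\ $u^n\leq u'^n$, $dP\otimes dt\otimes dx$-almost everywhere.

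Next I would pass to the limit. By the construction recalled in the proof of Theorem \ref{maintheo}, one can choose convex combinations $\hat u^n=\sum_k \alpha_k^n u^{n_k}$ and $\hat u'^n=\sum_k \alpha_k^n u'^{n_k}$ with the \emph{same} coefficients $(\alpha_k^n)$ which converge strongly to $u$ and $u'$ in $L^2([0,T]\times\Omega;H^1_0(\cO))$; the pointwise inequality then survives in the limit, giving $u\leq u'$ almost everywhere. Since both $u$ and $u'$ admit quasi-continuous versions $\tilde u,\tilde u'$ and the function $\tilde u'-\tilde u$ is itself quasi-continuous and non-negative almost everywhere, standard parabolic potential theory (uniqueness of quasi-continuous representatives, see Theorem III.1 in \cite{PIERRE}) upgrades this to $\tilde u\leq\tilde u'$ quasi-everywhere. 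For the measures: from $u^n\leq u'^n$ and the monotonicity of $x\mapsto x^-$ one gets $\nu^n\geq \nu'^n$ as positive measures on $[0,T]\times\cO$, hence $\hat\nu^n\geq\hat\nu'^n$; passing to the vague limit, for every non-negative test function $\varphi\in\mathcal{D}$,
\begin{eqnarray*}
\int_0^T\!\!\int_\cO \varphi\, d\nu = \lim_{n\to\infty}\int_0^T\!\!\int_\cO \varphi\, d\hat\nu^n \geq \lim_{n\to\infty}\int_0^T\!\!\int_\cO \varphi\, d\hat\nu'^n = \int_0^T\!\!\int_\cO \varphi\, d\nu',
\end{eqnarray*}
which is exactly $\nu\geq\nu'$ in the distribution sense.

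The main obstacle is to arrange the Banach-Saks type extraction of Subsection \ref{subsec5.2} so that the same convex coefficients $(\alpha_k^n)$ can be used for the two pairs $(u^n,\nu^n)$ and $(u'^n,\nu'^n)$, in such a way that the strong convergences $\hat u^n\to u$ and $\hat u'^n\to u'$ hold simultaneously while the monotonicity $\hat\nu^n\geq\hat\nu'^n$ is preserved. This amounts to a joint diagonal extraction in a product Hilbert space; it is routine but must be carried out carefully because the construction of the limiting measures in Subsection \ref{subsec5.2} relied on the same device. The remaining steps --- pointwise limit of the a.e.\ inequality, upgrade from a.e.\ to quasi-everywhere via quasi-continuity, and vague limit of positive measures --- are then standard.
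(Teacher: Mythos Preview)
Your proposal is correct and follows the same penalization-then-limit strategy as the paper, but the paper's argument is considerably shorter at both stages. First, instead of re-deriving the penalized comparison $u^n\leq u'^n$ via It\^o's formula applied to a smoothing of $(w^+)^2$, the paper simply observes that $F_t(x,y):=f_t(x)+n(y-S_t)^-$ and $F'_t(x,y):=f'_t(x)+n(y-S_t)^-$ satisfy $F_t(x,u^n_t)\leq F'_t(x,u^n_t)$ and invokes the existing comparison theorem for SPDE without obstacle (from \cite{Denis}). Second, and more to the point of your ``main obstacle'': no joint Banach--Saks extraction is needed. By the Mignot--Puel construction recalled in Subsection \ref{subsec5.2}, the sequences $(u^n)_n$ and $(u'^n)_n$ are \emph{monotone non-decreasing} (since $v^n=u^n-z$ increases to $v$), so the inequality $u^n\leq u'^n$ passes to the a.e.\ limit directly, and the vague convergences $n(u^n-S)^-dx\,dt\to\nu$ and $n(u'^n-S)^-dx\,dt\to\nu'$ hold along the original penalization sequence without forming convex combinations. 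This is why the paper can simply say the conclusion is immediate from the construction in Subsection \ref{subsec5.2}.
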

\begin{pf} We consider the following two penalized
equations:
\begin{eqnarray*}
du_t^n&=&Au_t^n\,dt+f_t\,dt+
\sum_{i=1}^d\partial_ig^i_t\,dt+
\sum_{j=1}^{+\infty
}h^j_t\,dB^j_t+n
\bigl(u_t^n-S_t\bigr)^-\,dt,
\\
du_t^{\prime n}&=&Au_t^{\prime n}\,dt+f'_t\,dt+
\sum_{i=1}^d\partial_ig^i_t\,dt+
\sum_{j=1}^{+\infty}h^j_t\,dB^j_t+n
\bigl(u_t^{\prime n}-S_t\bigr)^-\,dt,
\end{eqnarray*}
and we denote
\begin{eqnarray*}
F_t\bigl(x,u_t^n\bigr)&=&f_t(x)+n
\bigl(u_t^n-S_t\bigr)^-,
\\
F'_t\bigl(x,u_t^n
\bigr)&=&f'_t(x)+n\bigl(u_t^n-S_t
\bigr)^-.
\end{eqnarray*}
With
assumption (2) we have that $F_t(x,u_t^n)\leq F'_t(x,u_t^n), dt\otimes dx\otimes dP$-a.e. Therefore, from the comparison theorem
for SPDE (without obstacle, see \cite{Denis}), we know that $\forall
t\in[0,T]$, $u_t^n\leq u_t^{\prime n}, dx\otimes dP$-a.e., thus,
$n(u_t^n-S_t)^-\geq n(u_t^{\prime n}-S_t)^-$.

The results are an immediate consequence of the construction of $(u,\nu
)$ and $(u',\nu' )$ given in Section \ref{subsec52}.
\end{pf}
%
%s6.2 #&#
\subsection{A comparison theorem in the general case}

We now come back to the general setting and consider $(u^1,\nu^1
)=\mathcal{R} (\xi^1, f^1,g,h,S^1)$ the solution of the SPDE with
obstacle with null boundary condition:
%
%e24 #&#
%e25 #&#
\begin{eqnarray}
\cases{ %
\displaystyle du^1_t(x)=Lu^1_t(x)\,dt+f^1
\bigl(t,x,u^1_t(x),\nabla u^1_t(x)
\bigr)\,dt\vspace*{2pt}\cr
\qquad\hspace*{22pt}{}\displaystyle+\sum_{i=1}^d\partial_ig_i
\bigl(t,x,u^1_t(x),\nabla u^1_t(x)
\bigr)\,dt
\nonumber
\vspace*{2pt}\cr
\qquad\hspace*{22pt}{}\displaystyle+\sum_{j=1}^{+\infty
}h_j
\bigl(t,x,u^1_t(x),\nabla u^1_t(x)
\bigr)\,dB^j_t +\nu^1 (x, dt),
\vspace*{2pt}\cr
u^1\geq S^1, u^1_0=
\xi^1,}
\end{eqnarray}
where we assume $(\xi^1, f^1,g,h)$ satisfy hypotheses \textup{\ref{assH}}, \textup{\ref{assI}} and \textup{\ref{asso}}.

We consider another coefficient $f^2$ which satisfies the same
assumptions as $f^1$, another obstacle $S^2$ which satisfies \textup{\ref{asso}}
and another initial condition $\xi^2$ belonging to $L^2 (\Omega\times
\cO)$ and $\mathcal{F}_0$ adapted such that $\xi^2\geq S^2_0$. We
denote by $(u^2,\nu^2 )=\mathcal{R} (\xi^2, f^2,g,h,S^2)$.
% the solution of the SPDE with obstacle
%u'_t(x))\,dt+\sum_{i=1}^d\partial_ig_i(t,x,u'_t(x),\nabla
%u'_t(x))\,dt\nonumber\\&\ \ \ \ +\sum_{j=1}^{+
%u'_t(x))\,dB^j_t +\nu' (x, dt)\\
%&u'\geq S\,\ u'_0=\xi',\ \end{split}\right.\end{equation}
%
%th8 #&#
\begin{theorem} \label{comparison}Assume that the following conditions
hold:
\begin{longlist}[(1)]
\item[(1)]$\xi^1\leq\xi^2, dx\otimes dP$-a.e.
\item[(2)]$f^1(u^1,\nabla u^1)\leq f^2(u^1,\nabla u^1), dt\otimes
dx\otimes dP$-a.e.
\item[(3)]$S^1\leq S^2, dt\otimes dx\otimes dP$-a.e.
\end{longlist}
Then for almost all $\omega\in\Omega$, $u^1(t,x)\leq u^2(t,x), q.e.$
\end{theorem}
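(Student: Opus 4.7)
\textbf{Proof plan for Theorem \ref{comparison}.} The natural strategy is to show that $(u-u')^+ = 0$ almost everywhere by applying the Itô formula for the difference of two solutions (Theorem \ref{Itodifference}) to a $C^{1,2}$ approximation of the map $x \mapsto ((x)^+)^2$. The Itô formula was established for the linear case but, as the paper already remarks, every solution of the nonlinear SPDE is also a solution of a linear one (with coefficients $f(\cdot,u,\nabla u)$, $g(\cdot, u,\nabla u)$, $h(\cdot, u,\nabla u)$ frozen along the solution), so the formula applies to both $(u,\nu)$ and $(u',\nu')$. Once the pointwise a.e. inequality $u\leq u'$ is obtained, the upgrade to the quasi-everywhere statement follows from the fact that both processes admit quasi-continuous versions (by Theorem \ref{mainquasicontinuity} applied to $u-z$ and $u'-z'$ respectively, combined with the quasi-continuity of the SPDE part).

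First I would fix a sequence of $C^{1,2}$ functions $\Phi_\epsilon \ge 0$ approximating $\Phi(x) = ((x)^+)^2$ from above, with $\Phi_\epsilon'(0)=0$, $\Phi_\epsilon' \ge 0$, $\Phi_\epsilon''$ bounded and supported in $\{x>0\}$, and $\Phi_\epsilon'(x) \to 2x^+$, $\Phi_\epsilon''(x) \to 2\ind_{\{x>0\}}$ as $\epsilon\downarrow 0$. Applying Theorem \ref{Itodifference} to $\Phi_\epsilon(u_t - u'_t)$ and letting $\epsilon \to 0$ yields, almost surely, for all $t\in[0,T]$,
\begin{eqnarray*}
&&\int_\cO ((u_t-u'_t)^+)^2\,dx + 2\int_0^t \mathcal{E}\!\left((u_s-u'_s)^+, u_s-u'_s\right)ds \\
&&= 2\int_0^t \left((u_s-u'_s)^+, f(s,u_s,\nabla u_s)-f'(s,u'_s,\nabla u'_s)\right)ds \\
&&\quad -2\sum_{i=1}^d \int_0^t \int_\cO \ind_{\{u_s>u'_s\}}\partial_i(u_s-u'_s)\left(g_i(u_s,\nabla u_s)-g_i(u'_s,\nabla u'_s)\right)dx\,ds \\
&&\quad + \sum_{j=1}^{+\infty}\int_0^t \int_\cO \ind_{\{u_s>u'_s\}}\left(h_j(u_s,\nabla u_s)-h_j(u'_s,\nabla u'_s)\right)^2 dx\,ds \\
&&\quad + (\text{martingale})\ +\ 2\int_0^t\int_\cO (\tilde{u}_s-\tilde{u}'_s)^+(\nu-\nu')(dx,ds),
\end{eqnarray*}
where I used the standard identity $\mathcal{E}((u-u')^+, u-u') = \mathcal{E}((u-u')^+)$ coming from the fact that $\nabla(u-u') = \nabla(u-u')^+$ on $\{u>u'\}$. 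The resulting elliptic term satisfies $\mathcal{E}((u-u')^+) \ge \lambda \|\nabla (u-u')^+\|^2$.

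The main obstacle is controlling the measure term $\int_0^t\int_\cO (\tilde{u}_s-\tilde{u}'_s)^+(\nu-\nu')(dx,ds)$, and I would argue it is $\le 0$ as follows. By the Skohorod minimality condition for $(u,\nu)$, $\nu$ is supported on $\{\tilde{u}=\tilde S\}$; on this set, $\tilde u = \tilde S \le \tilde S' \le \tilde u'$, so $(\tilde u - \tilde u')^+ = 0$ $\nu$-a.e.\ and the $\nu$-contribution vanishes. The $\nu'$-contribution equals $-\int_0^t\int_\cO (\tilde u - \tilde u')^+ d\nu'$, which is non-positive since $\nu'$ is a positive measure and the integrand is non-negative. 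The second key algebraic point is the sign of the drift: writing
\[ f(u,\nabla u) - f'(u',\nabla u') = \bigl[f(u,\nabla u) - f(u',\nabla u')\bigr] + \bigl[f(u',\nabla u') - f'(u',\nabla u')\bigr],\]
the second bracket is $\leq 0$ by hypothesis (2), so multiplying by $(u-u')^+\ge 0$ and using the Lipschitz bound on $f$ gives
$(u-u')^+[f(u,\nabla u) - f'(u',\nabla u')] \leq C((u-u')^+)^2 + C(u-u')^+\, |\nabla(u-u')^+|$ pointwise on $\{u>u'\}$.

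For the $g$ and $h$ terms, the Lipschitz constants $\alpha$ and $\beta$ for the gradient variable enter, and a Young inequality combined with the strict ellipticity $\mathcal{E}(\cdot) \ge \lambda \|\nabla \cdot\|^2$ allows one to absorb all gradient contributions on the right-hand side into the left-hand side thanks to the structural assumption $2\alpha+\beta^2 < 2\lambda$ (exactly as in the uniqueness argument for the SPDE without obstacle). After taking expectation (the martingale vanishes by standard arguments, eventually via Burkholder-Davis-Gundy on a localizing sequence) one is left with an inequality of the form
\[ E\|(u_t-u'_t)^+\|^2 + c\, E\int_0^t \|\nabla (u_s-u'_s)^+\|^2\, ds \le C\int_0^t E\|(u_s-u'_s)^+\|^2\, ds,\]
with $c>0$. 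Gronwall's lemma yields $E\|(u_t-u'_t)^+\|^2 = 0$ for all $t$, hence $u \le u'$ $dP\otimes dt\otimes dx$-a.e., and by Proposition \ref{Versiont} applied to the quasi-continuous versions this gives $\tilde u \le \tilde u'$ quasi-everywhere, completing the proof.
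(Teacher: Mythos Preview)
Your approach is essentially identical to the paper's: approximate $(\cdot)^+{}^2$ by smooth functions, apply the It\^o formula of Theorem~\ref{Itodifference} to the difference, show the measure term is nonpositive via the Skorokhod condition, then use the Lipschitz bounds plus the contraction hypothesis $2\alpha+\beta^2<2\lambda$ and Gronwall. One small slip: in your drift decomposition you write
\[
f(u,\nabla u)-f'(u',\nabla u')=\bigl[f(u,\nabla u)-f(u',\nabla u')\bigr]+\bigl[f(u',\nabla u')-f'(u',\nabla u')\bigr]
\]
and claim the second bracket is $\le 0$ by hypothesis~(2). But hypothesis~(2) asserts $f\le f'$ only along $(u,\nabla u)$, not along $(u',\nabla u')$. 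The paper instead splits as $[f(u,\nabla u)-f'(u,\nabla u)]+[f'(u,\nabla u)-f'(u',\nabla u')]$, so that the first bracket is $\le 0$ by the hypothesis as stated and the second is controlled by the Lipschitz property of $f'$. With this correction your argument goes through verbatim.
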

We put $\hat{u}=u^1-u^2$, $\hat{\xi}=\xi^1-\xi^2$, $\hat
{f}_t=f^1(t,u^1_t,\nabla
u^1_t)-f^2(t,u^2_t,\nabla u^2_t)$, $\hat{g}_t=g(t,u^1_t,\nabla
u^1_t)-g(t,u^2_t,\nabla u^2_t)$ and $\hat{h}_t=h(t,u^1_t,\nabla
u_t)-h(t,u^2_t,\nabla u^2_t)$. The main idea is to evaluate
$E\Vert \hat{u}_t^+\Vert^2$, thanks to It\^o's formula, and
then apply Gronwall's inequality. Therefore, we start by the following lemma:
%
%le7 #&#
\begin{lemma}
For all $t\in[0,T]$, we have
%
%e26 #&#
\begin{eqnarray}
\label{positivepart}&&E\bigl\Vert \hat{u}_t^+\bigr\Vert^2+2E\int
_0^t\mathcal{E}\bigl(\hat{u}_s^+
\bigr)\,ds\nonumber\\
&&\qquad=E\bigl\Vert \hat{\xi }^+\bigr\Vert^2+2E\int_0^t
\bigl(\hat{u}_s^+,\hat{f}_s\bigr)\,ds-2E\int
_0^t\bigl(\nabla\hat {u}_s^+,\hat
{g}_s\bigr)\,ds
\\
&&\qquad\quad{}+2E\int_0^t\int_{\mathcal{O}}\hat
{u}_s^+(x) \bigl(\nu-\nu'\bigr) (dx\,ds)+E\int
_0^t\bigl\Vert I_{\{\hat{u}_s>0\}}|\hat{h}_s|
\bigr\Vert^2\,ds.\nonumber
\end{eqnarray}
\end{lemma}
\begin{pf} We approximate the function $\psi\dvtx y\in
R\rightarrow(y^+)^2$ by a sequence $(\psi_n)_{n\in\bbN^*}$ of
regular functions:
let $\varphi$ be a $C^{\infty}$ increasing function such that
\[
\forall y\in\,]-\hspace*{-2pt}\infty,1]\qquad \varphi(y)=0 \quad\mbox{and}\quad \forall y\in[2,+\infty[ \qquad\varphi(y)=1.
\]
We set for all $n\in\bbN^*$,
\[
\forall y\in R \qquad\psi_n(y)=y^2\varphi(ny).
\]
It is easy to verify that $(\psi_n)_n$
converges uniformly to the function $\psi$ and that, moreover, we have
the estimates
\[
\forall y\in R^+, \forall n \qquad 0\leq\psi_n(y)\leq\psi(y),\qquad 0\leq
\psi'_n(y)\leq Cy,\qquad \bigl|\psi''_n(y)\bigr|
\leq C.
\]
Thanks to Theorem \ref{Itodifference}, for all
$n\in\bbN^*$ and $t\in[0,T]$, we have
%
%e27 #&#
\begin{eqnarray}
\label{appropositivepart}&&E\int_{\mathcal{O}}\psi_n(
\hat{u}_s)\,dx+E\int_0^t\mathcal{E}
\bigl(\psi'_n(\hat{u}_s),
\hat{u}_s\bigr)\,ds\nonumber\\
&&\qquad =E\int_{\mathcal{O}}\psi_n(
\hat{\xi})\,dx+E\int_0^t\bigl(
\psi'_n(\hat {u}_s),\hat{f}_s
\bigr)\,ds
-E\int_0^t\bigl(\nabla\psi'_n(
\hat{u}_s),\hat {g}_s\bigr)\,ds\\
&&\qquad\quad{}+E\int_0^t
\int_{\mathcal{O}}\psi'_n\bigl(
\hat{u}_s(x)\bigr)\hat{\nu }(dx\,ds)+\frac{1}{2}E\int
_0^t\int_{\mathcal{O}}
\psi''_n\bigl(\hat {u}_s(x)
\bigr)\hat {h}_s^2(x)\,dx\,ds.
\nonumber
\end{eqnarray}
Taking the limit, thanks to the dominated convergence theorem, we
obtain the convergences of all the terms except $E\int_0^t\int_{\mathcal
{O}}\psi'_n(\hat{u}_s(x))\hat{\nu}(dx\,ds)$.

From \eqref{appropositivepart}, we know that
\[
-E\int_0^t\int_{\mathcal{O}}
\psi'_n\bigl(\hat{u}_s(x)\bigr)\hat{\nu
}(dx\,ds)\leq C.
\]
Moreover, we have the following relation:
\begin{eqnarray*}
&&-E\int_0^t\int_{\mathcal{O}}
\psi'_n\bigl(\hat{u}_s(x)\bigr)\hat{\nu
}(dx\,ds)
\\
&&\qquad=-E\int_0^t\int_{\mathcal{O}}
\psi'_n\bigl(S^1_s(x)-u^2_s(x)
\bigr)\nu^1(dx\,ds) \\
&&\qquad\quad{}+E\int_0^t\int
_{\mathcal{O}}\psi'_n\bigl(u^1_s(x)-S^2_s(x)
\bigr)\nu^2(dx\,ds)
\\
&&\qquad=E\int_0^t\int_{\mathcal{O}}
\psi'_n\bigl(u^2_s(x)-S^1_s(x)
\bigr)\nu^1(dx\,ds)\\
&&\qquad\quad{}+ E\int_0^t\int
_{\mathcal{O}}\psi'_n\bigl(u^1_s(x)-S^2_s(x)
\bigr)\nu^2(dx\,ds).
\end{eqnarray*}
By Fatou's lemma, we obtain
\begin{eqnarray*}
&&2E\int_0^t\int_{\mathcal{O}}
\bigl(u^2_s(x)-S^1_s(x)\bigr)^+
\nu^1(dx\,ds)+2E\int_0^t\int
_{\mathcal{O}}\bigl(u^1_s(x)-S^2_s(x)
\bigr)^+\nu^2(dx\,ds)\\
&&\qquad<+\infty.
\end{eqnarray*}
Hence, the convergence of the term $E\int_0^t\int_{\mathcal{O}}\psi'_n(\hat{u}_s(x))\hat{\nu}(dx\,ds)$ comes from the dominated
convergence theorem.
\end{pf}

\begin{pf*}{Proof of Theorem \ref{comparison}}
Applying It\^o's formula
(\ref{positivepart}) to $(\hat{u}_t^+)^2$, we have
\begin{eqnarray*}
&&E\bigl\Vert \hat{u}_t^+\bigr\Vert^2+2E\int_0^tI_{\{
\hat
{u}_s>0\}}
\mathcal{E}(\hat{u}_s)\,ds\\
&&\qquad=2E\int_0^t
\bigl(\hat{u}_s^+,\hat {f}_s\bigr)\,ds+2E\int
_0^t\bigl(\hat{u}_s^+,
\hat{g}_s\bigr)\,ds
\\
&&\qquad\quad{}+E\int_0^t\bigl\Vert I_{\{\hat{u}_s>0\}}|
\hat{h}_s|\bigr\Vert^2\,ds+2E\int_0^t
\int_{\mathcal
{O}}\bigl(u^1_s-u^2_s
\bigr)^+(x) \bigl(\nu^1-\nu^2\bigr) (dx,ds).
\end{eqnarray*}
As we assume that $f^1(u^1,\nabla u^1)\leq f^{2}(u^1,\nabla
u^1)$,
\begin{eqnarray*}
\hat{u}_s^+\hat{f}_s&=&\hat{u}_s^+\bigl\{
f^1\bigl(s,u^1_s,\nabla u^1_s
\bigr)-f^2\bigl(s,u^1_s,\nabla
u^1_s\bigr)\bigr\}\\
&&{}+\hat{u}_s^+\bigl
\{f^2\bigl(s,u^1_s,\nabla
u^1_s\bigr)-f^2\bigl(s,u^2_s,
\nabla u^2_s\bigr)\bigr\}
\\
&\leq&\hat{u}_s^+\bigl\{ f^2\bigl(s,u^1_s,
\nabla u^1_s\bigr)-f^2\bigl(s,u^2_s,
\nabla u^2_s\bigr)\bigr\}.
\end{eqnarray*}
Then with the
Lipschitz condition, using Cauchy--Schwarz's inequality, we have the
following relations:
\begin{eqnarray*}
E\int_0^t\bigl(\hat{u}_s^+,
\hat{f}_s\bigr)\,ds&\leq& \biggl(C+\frac{C}{\varepsilon}\biggr)E\int
_0^t\bigl\Vert \hat{u}_s^+
\bigr\Vert^2\,ds+\frac
{C\varepsilon}{\lambda} E\int_0^t
\mathcal{E}\bigl(\hat{u}_s^+\bigr)\,ds,
\\
E\int_0^t\bigl(\nabla\hat{u}_s^+,
\hat{g}_s\bigr)&\leq&\frac{\varepsilon+\alpha
}{\lambda
}E\int_0^t
\mathcal{E}\bigl(\hat{u}_s^+\bigr)\,ds+\frac{C}{\varepsilon}E\int
_0^t\bigl\Vert \hat{u}_s^+
\bigr\Vert^2\,ds,
\\
E\int_0^t\bigl\Vert I_{\{\hat{u}_s>0\}}|
\hat{h}_s|\bigr\Vert^2\,ds&\leq& CE\int_0^t
\bigl\Vert \hat{u}_s^+\bigr\Vert^2\,ds+\frac{\beta^2+\varepsilon
}{\lambda}E\int
_0^t\mathcal{E}\bigl(\hat{u}_s^+
\bigr)\,ds.
\end{eqnarray*}
The last term is equal to
$-2E\int_0^t\int_{\mathcal{O}}(u^1_s-u^2_s)^+(x)\nu^2(dx,ds)\leq0$,
because on $\{u^1\leq u^2\}$, $(u^1-u^2)^+=0$ and on $\{u^1>u^2\}$,
$\nu^1(dx,ds)=0$. Thus, we have the following inequality:
\[
E\bigl\Vert \hat{u}_t^+\bigr\Vert^2+\biggl(2-\frac{2\alpha+2\varepsilon
}{\lambda
}-
\frac{2C\varepsilon}{\lambda}-\frac{\beta^2+\varepsilon}{\lambda
}\biggr)E\int_0^t
\mathcal{E}\bigl(\hat{u}_s^+\bigr)\,ds\leq CE\int_0^t
\bigl\Vert \hat {u}_s^+\bigr\Vert^2\,ds.
\]
We can take $\varepsilon$ small enough such that
$2-\frac{2\alpha+2\varepsilon}{\lambda}-\frac{2C\varepsilon}{\lambda
}-\frac
{\beta^2+\varepsilon}{\lambda}>0$,
and we have
\[
E\bigl\Vert \hat{u}_t^+\bigr\Vert^2\leq CE\int
_0^t\bigl\Vert \hat{u}_s^+
\bigr\Vert^2\,ds.
\]
Then we deduce the
result from Gronwall's lemma.
\end{pf*}

%re6 #&#
\begin{remark}Applying the comparison theorem to the same obstacle
gives another proof of the uniqueness of the solution.
\end{remark}

%The authors wish to thank the anonymous referee for the pertinent
%remarks she/he made.
%

% imsref loaded by akundreckaite, 2013-02-14 08:28:53
%

% zodis "Acknowledgments" paliekamas pagal autoriu

%suskaldyti doi

\printaddresses


\begin{thebibliography}{26}
% BibTex style file: ims.bst, 2013-01-28
% Default style options (sort=0,type=number).
% Used options (sort=1,type=number).

%b1 #&#
\bibitem{Aronson}
%
\begin{barticle}[mr]
\bauthor{\bsnm{Aronson},~\bfnm{D.~G.}\binits{D.~G.}}
(\byear{1963}).
\btitle{On the {G}reen's function for second order parabolic differential
equations with discontinous coefficients}.
\bjournal{Bull. Amer. Math. Soc.}
\bvolume{69}
\bpages{841--847}.
\bid{issn={0002-9904}, mr={0155109}}
\bptok{imsref}%
\end{barticle}
%
\endbibitem

%b2 #&#
\bibitem{Aronson3}
%
\begin{barticle}[mr]
\bauthor{\bsnm{Aronson},~\bfnm{D.~G.}\binits{D.~G.}}
(\byear{1968}).
\btitle{Non-negative solutions of linear parabolic equations}.
\bjournal{Ann. Scuola Norm. Sup. Pisa (3)}
\bvolume{22}
\bpages{607--694}.
\bid{mr={0435594}}
\bptok{imsref}%
\end{barticle}
%
\endbibitem

%b3 #&#
\bibitem{BCEF}
%
\begin{bmisc}[auto:STB|2013/01/29|08:09:18]
\bauthor{\bsnm{Bally},~\bfnm{V.}\binits{V.}},
\bauthor{\bsnm{Caballero},~\bfnm{E.}\binits{E.}},
\bauthor{\bsnm{El-Karoui},~\bfnm{N.}\binits{N.}} \AND
\bauthor{\bsnm{Fernandez},~\bfnm{B.}\binits{B.}}
(\byear{2004}).
\bhowpublished{Reflected BSDE's PDE's and variational inequalities.
INRIA report. Preprint}.
\bptok{imsref}%
\end{bmisc}
%
\endbibitem

%%b4 #&#
%%
%(\byear{2001}).
%equations}.
%%

%b5 #&#
\bibitem{BensoussanLions78}
%
\begin{bbook}[mr]
\bauthor{\bsnm{Bensoussan},~\bfnm{A.}\binits{A.}} \AND
\bauthor{\bsnm{Lions},~\bfnm{J.~L.}\binits{J.~L.}}
(\byear{1978}).
\btitle{Applications des in\'equations variationnelles en contr\^ole
stochastique}.
\bpublisher{Dunod}, \blocation{Paris}.
\bid{mr={0513618}}
\bptok{imsref}%
\end{bbook}
%
\endbibitem

%b6 #&#
\bibitem{CT75}
%
\begin{barticle}[mr]
\bauthor{\bsnm{Charrier},~\bfnm{Pierre}\binits{P.}} \AND
\bauthor{\bsnm{Troianiello},~\bfnm{Giovanni~Maria}\binits{G.~M.}}
(\byear{1975}).
\btitle{Un r\'esultat d'existence et de r\'egularit\'e pour les solutions
fortes d'un probl\`eme unilat\'eral d'\'evolution avec obstacle d\'ependant
du temps}.
\bjournal{C. R. Acad. Sci. Paris S\'er. A-B}
\bvolume{281}
\bpages{Aii, A621--A623}.
\bid{mr={0382826}}
\bptok{imsref}%
\end{barticle}
%
\endbibitem

%b7 #&#
\bibitem{Denis}
%
\begin{barticle}[mr]
\bauthor{\bsnm{Denis},~\bfnm{Laurent}\binits{L.}}
(\byear{2004}).
\btitle{Solutions of stochastic partial differential equations
considered as
{D}irichlet processes}.
\bjournal{Bernoulli}
\bvolume{10}
\bpages{783--827}.
\bid{doi={10.3150/bj/1099579156}, issn={1350-7265}, mr={2093611}}
\bptok{imsref}%
\end{barticle}
%
\endbibitem

%%b8 #&#
%%
%(\byear{2011}).
%VIII in the series
%``Quaderni di Matematica del Dipartimento di Matematica della Seconda
%Universit\`a di Napoli''}.}
%%

%b9 #&#
\bibitem{DMS05}
%
\begin{barticle}[auto:STB|2013/01/29|08:09:18]
\bauthor{\bsnm{Denis},~\bfnm{L.}\binits{L.}},
\bauthor{\bsnm{Matoussi},~\bfnm{A.}\binits{A.}} \AND
\bauthor{\bsnm{Sto{\"{\i}}ca},~\bfnm{L.}\binits{L.}}
(\byear{2005}).
\btitle{$L^p$ estimates for the uniform norm of solutions of
quasilinear SPDE}.
\bjournal{Probab. Theory Related Fields}
\bvolume{133}
\bpages{437--463}.
\bptok{imsref}%
\end{barticle}
%
\endbibitem

%%b10 #&#
%%
%(\byear{2009}).
%PDE's}.
%%

%b11 #&#
\bibitem{DenisStoica}
%
\begin{barticle}[auto:STB|2013/01/29|08:09:18]
\bauthor{\bsnm{Denis},~\bfnm{L.}\binits{L.}} \AND
\bauthor{\bsnm{Sto{\"{\i}}ca},~\bfnm{L.}\binits{L.}}
(\byear{2004}).
\btitle{A general analytical result for non-linear s.p.d.e.'s and
applications}.
\bjournal{Electron. J. Probab.}
\bvolume{9}
\bpages{674--709}.
\bptok{imsref}%
\end{barticle}
%
\endbibitem

%b12 #&#
\bibitem{DonatiPardoux}
%
\begin{barticle}[mr]
\bauthor{\bsnm{Donati-Martin},~\bfnm{C.}\binits{C.}} \AND
\bauthor{\bsnm{Pardoux},~\bfnm{{\'E}.}\binits{{\'E}.}}
(\byear{1993}).
\btitle{White noise driven {SPDE}s with reflection}.
\bjournal{Probab. Theory Related Fields}
\bvolume{95}
\bpages{1--24}.
\bid{doi={10.1007/BF01197335}, issn={0178-8051}, mr={1207304}}
\bptok{imsref}%
\end{barticle}
%
\endbibitem

%b13 #&#
\bibitem{EKPPQ}
%
\begin{barticle}[mr]
\bauthor{\bsnm{El~Karoui},~\bfnm{N.}\binits{N.}},
\bauthor{\bsnm{Kapoudjian},~\bfnm{C.}\binits{C.}},
\bauthor{\bsnm{Pardoux},~\bfnm{E.}\binits{E.}},
\bauthor{\bsnm{Peng},~\bfnm{S.}\binits{S.}} \AND
\bauthor{\bsnm{Quenez},~\bfnm{M.~C.}\binits{M.~C.}}
(\byear{1997}).
\btitle{Reflected solutions of backward {SDE}'s, and related obstacle problems
for {PDE}'s}.
\bjournal{Ann. Probab.}
\bvolume{25}
\bpages{702--737}.
\bid{doi={10.1214/aop/1024404416}, issn={0091-1798}, mr={1434123}}
\bptok{imsref}%
\end{barticle}
%
\endbibitem

%b14 #&#
\bibitem{Klimsiak}
%
\begin{barticle}[mr]
\bauthor{\bsnm{Klimsiak},~\bfnm{Tomasz}\binits{T.}}
(\byear{2012}).
\btitle{Reflected {BSDE}s and the obstacle problem for semilinear
{PDE}s in
divergence form}.
\bjournal{Stochastic Process. Appl.}
\bvolume{122}
\bpages{134--169}.
\bid{doi={10.1016/j.spa.2011.10.001}, issn={0304-4149}, mr={2860445}}
\bptok{imsref}%
\end{barticle}
%
\endbibitem

%b15 #&#
\bibitem{LionsMagenes}
%
\begin{bbook}[auto:STB|2013/01/29|08:09:18]
\bauthor{\bsnm{Lions},~\bfnm{J.~L.}\binits{J.~L.}} \AND
\bauthor{\bsnm{Magenes},~\bfnm{E.}\binits{E.}}
(\byear{1968}).
\btitle{Probl\`emes aux limites non homog\`enes et applications}.
\bpublisher{Dunod}, \blocation{Paris}.
\bptok{imsref}%
\end{bbook}
%
\endbibitem

%b16 #&#
\bibitem{MatoussiStoica}
%
\begin{barticle}[mr]
\bauthor{\bsnm{Matoussi},~\bfnm{Anis}\binits{A.}} \AND
\bauthor{\bsnm{Sto\"{\i}ca},~\bfnm{Lucretiu}\binits{L.}}
(\byear{2010}).
\btitle{The obstacle problem for quasilinear stochastic {PDE}'s}.
\bjournal{Ann. Probab.}
\bvolume{38}
\bpages{1143--1179}.
\bid{doi={10.1214/09-AOP507}, issn={0091-1798}, mr={2674996}}
\bptok{imsref}%
\end{barticle}
%
\endbibitem

%b17 #&#
\bibitem{MX08}
%
\begin{barticle}[mr]
\bauthor{\bsnm{Matoussi},~\bfnm{Anis}\binits{A.}} \AND
\bauthor{\bsnm{Xu},~\bfnm{Mingyu}\binits{M.}}
(\byear{2008}).
\btitle{Sobolev solution for semilinear {PDE} with obstacle under monotonicity
condition}.
\bjournal{Electron. J. Probab.}
\bvolume{13}
\bpages{1035--1067}.
\bid{doi={10.1214/EJP.v13-522}, issn={1083-6489}, mr={2424986}}
\bptok{imsref}%
\end{barticle}
%
\endbibitem

%b18 #&#
\bibitem{MignotPuel}
%
\begin{barticle}[mr]
\bauthor{\bsnm{Mignot},~\bfnm{F.}\binits{F.}} \AND
\bauthor{\bsnm{Puel},~\bfnm{J.~P.}\binits{J.~P.}}
(\byear{1977}).
\btitle{In\'equations d'\'evolution paraboliques avec convexes d\'
ependant du
temps. {A}pplications aux in\'equations quasi variationnelles d'\'evolution}.
\bjournal{Arch. Ration. Mech. Anal.}
\bvolume{64}
\bpages{59--91}.
\bid{issn={0003-9527}, mr={0492885}}
\bptok{imsref}%
\end{barticle}
%
\endbibitem

%b19 #&#
\bibitem{NualartPardoux}
%
\begin{barticle}[mr]
\bauthor{\bsnm{Nualart},~\bfnm{D.}\binits{D.}} \AND
\bauthor{\bsnm{Pardoux},~\bfnm{{\'E}.}\binits{{\'E}.}}
(\byear{1992}).
\btitle{White noise driven quasilinear {SPDE}s with reflection}.
\bjournal{Probab. Theory Related Fields}
\bvolume{93}
\bpages{77--89}.
\bid{doi={10.1007/BF01195389}, issn={0178-8051}, mr={1172940}}
\bptok{imsref}%
\end{barticle}
%
\endbibitem

%b20 #&#
\bibitem{Pierre}
%
\begin{barticle}[mr]
\bauthor{\bsnm{Pierre},~\bfnm{Michel}\binits{M.}}
(\byear{1979}).
\btitle{Probl\`emes d'evolution avec contraintes unilat\'erales et potentiels
paraboliques}.
\bjournal{Comm. Partial Differential Equations}
\bvolume{4}
\bpages{1149--1197}.
\bid{doi={10.1080/03605307908820124}, issn={0360-5302}, mr={0544886}}
\bptok{imsref}%
\end{barticle}
%
\endbibitem

%b21 #&#
\bibitem{PIERRER}
%
\begin{bincollection}[mr]
\bauthor{\bsnm{Pierre},~\bfnm{Michel}\binits{M.}}
(\byear{1980}).
\btitle{Repr\'esentant pr\'ecis d'un potentiel parabolique}.
In \bbooktitle{Seminar on {P}otential {T}heory, {P}aris, {N}o. 5 ({F}rench)}.
\bseries{Lecture Notes in Math.}
\bvolume{814}
\bpages{186--228}.
\bpublisher{Springer}, \blocation{Berlin}.
\bid{mr={0593357}}
\bptok{imsref}%
\end{bincollection}
%
\endbibitem

%b22 #&#
\bibitem{RiezNagy}
%
\begin{bbook}[mr]
\bauthor{\bsnm{Riesz},~\bfnm{Frigyes}\binits{F.}} \AND
\bauthor{\bsnm{Sz.-Nagy},~\bfnm{B{\'e}la}\binits{B.}}
(\byear{1990}).
\btitle{Functional Analysis}.
\bpublisher{Dover}, \blocation{New York}.
%Reprint of
% the 1955 original}.
\bid{mr={1068530}}
\bptok{imsref}%
\end{bbook}
%
\endbibitem

%b23 #&#
\bibitem{SW}
%
\begin{barticle}[mr]
\bauthor{\bsnm{Sanz-Sol{\'e}},~\bfnm{Marta}\binits{M.}} \AND
\bauthor{\bsnm{Vuillermot},~\bfnm{Pierre-A.}\binits{P.-A.}}
(\byear{2003}).
\btitle{Equivalence and {H}\"older--{S}obolev regularity of solutions
for a
class of non-autonomous stochastic partial differential equations}.
\bjournal{Ann. Inst. Henri Poincar\'e Probab. Stat.}
\bvolume{39}
\bpages{703--742}.
\bid{doi={10.1016/S0246-0203(03)00015-3}, issn={0246-0203}, mr={1983176}}
\bptok{imsref}%
\end{barticle}
%
\endbibitem

%%b24 #&#
%%
%(\byear{1986}).
%In \bbooktitle{\'{E}cole D'\'Et\'e de Probabilit\'es de {S}aint-{F}lour,
%{XIV}---1984}.
%%

%b25 #&#
\bibitem{XuZhang}
%
\begin{barticle}[mr]
\bauthor{\bsnm{Xu},~\bfnm{Tiange}\binits{T.}} \AND
\bauthor{\bsnm{Zhang},~\bfnm{Tusheng}\binits{T.}}
(\byear{2009}).
\btitle{White noise driven {SPDE}s with reflection: Existence,
uniqueness and
large deviation principles}.
\bjournal{Stochastic Process. Appl.}
\bvolume{119}
\bpages{3453--3470}.
\bid{doi={10.1016/j.spa.2009.06.005}, issn={0304-4149}, mr={2568282}}
\bptok{imsref}%
\end{barticle}
%
\endbibitem

\end{thebibliography}
\end{document}